\numberwithin{equation}{section}    
\theoremstyle{plain}
\newtheorem{thm}{Theorem}[section]
\newtheorem{lem}[thm]{Lemma}
\newtheorem{prop}[thm]{Proposition}
\newtheorem{cor}[thm]{Corollary}
\theoremstyle{definition}
\newtheorem{defn}[thm]{Definition}
\theoremstyle{remark}
\newtheorem{rem}[thm]{Remark}
\newtheorem*{rem*}{Remark}
\newtheorem*{ack}{Acknowledgements}
\newcommand{\bs}{\boldsymbol}
\newcommand{\T}{{M}}    
\newcommand{\Ti}{{m}}    
\newcommand{\Tii}{{n}}    
\newcommand{\be}{\begin{equation}}    
\newcommand{\ee}{\end{equation}}    
\newcommand{\beu}{\begin{equation*}}    
\newcommand{\eeu}{\end{equation*}}    
\newcommand{\bea}{\begin{eqnarray}}    
\newcommand{\eea}{\end{eqnarray}}    
\newcommand{\beaa}{\begin{eqnarray*}}    
\newcommand{\eeaa}{\end{eqnarray*}}    
\newcommand{\bmx}{\begin{pmatrix}}    
\newcommand{\emx}{\end{pmatrix}}
\newcommand{\g}{{\mathfrak g}}    
\newcommand{\h}{{\mathfrak h}}    
\newcommand{\m}{{\mathfrak m}}
\newcommand{\Pp}{\mathcal P^{+}}
\newcommand{\mf}{\mathfrak}
\newcommand{\mc}{\mathcal}    
\newcommand{\al}{{\alpha}}    
\newcommand{\gh}{{\widehat \g}}
\newcommand{\alf}{{\textstyle{\frac{1}{2}}}}
\newcommand{\nn}{\nonumber}
\newcommand{\8}{{\infty}}
\newcommand{\eps}{\epsilon}
\newcommand{\Z}{{\mathbb Z}}
\renewcommand{\P}{{\mathcal P}}
\newcommand{\Q}{{\mathcal Q}}    
\newcommand{\R}{{\mathbb R}}
\newcommand{\ket}[1]{{\,\left|#1\right>}\,}
\newcommand{\id}{{\mathrm{id}}}
\newcommand{\uq}{{U_q}}
\newcommand{\uqsl}[1]{{U_{q^{r_{#1}}}(\widehat{\mathfrak{sl}}_2{}^{(#1)})}}    
\newcommand{\uqslp}[1]{{U_{q^{r_{#1}}}(\widehat{\mathfrak{sl}}_2)}}    
\newcommand{\uqgh}{{\uq(\widehat\g)}}    
\newcommand{\uqg}{{\uq(\g)}}
\newcommand{\uqbt}{{\uq(\mf{b}_2)}}
\newcommand{\YY}[2]{Y^{}_{#1, #2}}    
\newcommand{\MM}[2]{Y^{-1}_{#1, #2}}
\newcommand{\goi}[2]{=}
\newcommand{\It}{\mathcal X}
\newcommand{\Iw}{\mathcal W}
\newcommand{\Iy}{\mathcal Y}
\newcommand{\on}{}    
\newcommand{\groth}[1]{{\mathrm{Rep}(#1)}}    
\newcommand{\Cx}{\mathbb C^*}
\newcommand{\qnum}[1]{\left[ #1\right]_q}
\renewcommand{\binom}[2]{\begin{bmatrix} #1 \\ #2 \end{bmatrix}}
\newcommand{\qbinom}[2]{\binom{#1}{#2}_q}
\newcommand{\btp}{\begin{tikzpicture}[baseline=0pt,scale=0.9,line width=0.25pt]}    
\newcommand{\etp}{\end{tikzpicture}}
\newcommand{\Zys}{\Z\!\left[ Y_{i,a}^{\pm 1} \right]_{i\in I, a \in \Cx}}
\newcommand{\ma}[1]{\mathbb D_{#1}}
\renewcommand{\L}{L}
\newcommand{\scr}{\mathscr}
\newcommand{\nbri}{\mathbb X}
\newcommand{\nbrii}{\mathbb Y}
\newcommand{\snk}{{(i_t,k_t)_{1\leq t\leq \T}}}
\newcommand{\atp}[1]{}
\newcommand{\mon}{\mathsf m}
\newcommand{\path}{\longrightarrow}
\newcommand{\confer}{c.f. } 
\newcommand{\mchiq}{\scr M}
\newcommand{\nops}{\overline{\scr P}_{(i_t,k_t)_{1\leq t\leq \T}}}
\DeclareMathOperator{\concat}{\#}
\newcommand{\ps}{{(p_1,\dots,p_\T)}}
\newcommand{\pps}{{(p'_1,\dots,p'_\T)}}
\newcommand{\ptop}{{\left(\phigh_{i_1,k_1},\dots,\phigh_{i_\T,k_\T}\right)}}
\newcommand{\phigh}{p^{+}}
\newcommand{\plow}{p^{-}}
\newcommand{\psnake}{p^{\mathrm{snake}}}
\DeclareMathOperator{\wt}{wt}
\DeclareMathOperator{\res}{res}
\DeclareMathOperator{\trunc}{trunc}
\DeclareMathOperator{\Span}{span}
\DeclareMathOperator{\hgt}{hgt}
\DeclareMathOperator{\bott}{bot}
\newcommand{\emptyseq}{\emptyset}
\newcommand{\WQ}[1]{\mathbf Q^{#1}}
\newcommand{\W}[3]{\mathbf{T}_{#1,#2}^{#3}}
\newcommand{\Wt}[3]{\widetilde{\mathbf{T}}_{#1,#2}^{#3}}
\newcommand{\Ww}[3]{\overline{\mathbf{T}}_{#1,#2}^{#3}}
\newcommand{\st}[3]{\pi_{#1,#2}^{#3}}
\author{E. Mukhin} 
\address{\vspace{-.15cm} Department of Mathematical Sciences, 402
  N. Blackford St, LD 270, IUPUI, Indianapolis, IN 46202, USA.  }
\email{mukhin@math.iupui.edu}
\author{C. A. S. Young}
\address{\vspace{-.15cm} Department of Mathematics, University of York, Heslington, York YO105DD, UK. (Present address)\\
and Yukawa Institute for Theoretical Physics, Kyoto University,
  Kyoto, 606-8502, Japan.}  \email{charlesyoung@cantab.net}
\date{June 2011}
\begin{document} 
\title{Extended T-systems}
\begin{abstract}
We use the theory of $q$-characters to establish a number of short exact sequences 
in the category of finite-dimensional representations of the quantum affine groups of types A and B. That allows us to introduce a set of 3-term recurrence relations which contains  the celebrated T-system as a special case.
\end{abstract}

\maketitle
\noindent {\bf 2000 Mathematics Subject Classification:} Primary 17B37, Secondary 81R50, 82B23.

\section{Introduction}
The T-systems are important sets of recurrence relations which have many applications in integrable systems. The literature on the subject is vast: we refer 
the reader to the survey \cite{KNSrev} and references therein.

Originally, the T-systems were introduced as a family of relations in
the Grothendieck ring of the category of the finite-dimensional
modules of the Yangians and quantum affine algebras \cite{KR, KNS,
  NakajimaKR, HernandezKR}. More precisely, the T-systems 
correspond to a family of short
exact sequences of tensor products of Kirillov-Reshetikhin (KR) modules
\cite{HernandezKR}. 
The knowledge of the T-system is one of the main reasons the KR modules are comparatively well understood.

In this paper we argue that other classes of finite-dimensional modules of quantum affine algebras can be studied using recursions similar to the T-systems. We call such recursions  {\it extended T-systems}. The sense in which they generalize the usual T-system is perhaps most rapidly understood by examining Figures \ref{figA} and \ref{figB}, in which simple examples of T-system relations (above) and relations in the extended system (below) can be compared. Let us begin by describing the construction of these recursion relations.

The irreducible finite-dimensional modules of affine quantum groups
are parameterized by their highest $l$-weights or, equivalently, their Drinfeld polynomials. 
Given an irreducible module $T$, in many cases there is a natural rightmost and leftmost zero of the set of Drinfeld polynomials of $T$. 
We call the irreducible module corresponding to the Drinfeld polynomials of $T$ with
the rightmost (resp. leftmost) zero removed the left (resp. right) module, $L$ (resp. $R$). 
We call the irreducible module corresponding to the Drinfeld polynomials of $T$ with
both rightmost and leftmost zeros removed the bottom module, $B$. We call $T$ the top module. Obviously, the modules $L\otimes R$ and $T\otimes B$ have the same highest
$l$-weights.

We find that in many cases $T\otimes B$ is in fact irreducible, and the difference $L\otimes R- T\otimes B$ in the Grothendieck ring is also a class of an irreducible module. In fact this difference is even special, meaning that it has a unique dominant $l$-weight. We then proceed to factor the difference into a product of prime irreducible modules $N_1$ and $N_2$, which we call neighbours. Therefore we obtain a short exact sequence
\be\label{TTT}
0\to T\otimes B \to L \otimes R \to  N_1\otimes N_2\to 0,
\ee
which can be used to express module $T$ in the Grothendieck ring via smaller (in a natural order) modules. We note that unfortunately, in general we do not know which of the two coproducts corresponds to the choice of the arrows in the short exact sequence we made here.

\medskip

Our starting point in types A and B is the class of minimal affinizations (MA). The MA \cite{CPminaffBCFG,CPminaffireg,CPminaffADE} form an important class of irreducible modules of affine quantum groups which are the closest possible analogs of the evaluation modules. The KR modules are simplest examples of MA, corresponding to highest weights which are multiples of a fundamental weight. 

We find that the extended T-system closes among MA in type A. That means that if the top module $T$ is a MA then all other modules in (\ref{TTT}) are also MA.

We turn next to type B. Here we find that 
our extended T-system does not close in the class of MA (except for the case of $B_2$). Namely if $T$ is a MA, the modules $L,R,B$ are also  MA, but $N_1$ and $N_2$ in general are not. Therefore we are forced to consider a slightly more general class of modules, which we call ``wrapping'' modules, where the extended T-system does close. The wrapping modules seem to have properties similar to MA, at least from the combinatorial point of view. The extended T-system is then a recursion relation which allows one to compute, in particular, the wrapping modules via fundamental representations.

We proceed to extend our T-system to yet a larger class of modules -- the snake modules, introduced in \cite{MY1}. 
In type A, the snake modules are just modules related to skew Young diagrams, \cite{NT}. In type B, the modules related to skew Young diagrams \cite{KOS} form a subset of snake modules. The term ``snake'' is meant to be suggestive of the pattern formed by the zeros of the Drinfeld polynomials of such modules, c.f. Figure \ref{snakefig}. 

\medskip
In this paper we work in types A and B only, where MA, and more generally snake modules, are thin and special in the terminology of $q$-characters. Thin means that the Cartan part of the quantum affine algebra acts in a semi-simple way. This allows one to compute their $q$-characters explicitly, which was done \cite{NT, FMgl8} in type A and recently \cite{MY1} in type B. Then the problem of the existence of short exact sequences can be reduced to combinatorics.


A natural question is whether extended T-systems exist in all types, as is the case with the usual T-system. Computations suggest that they do: we give some illustrative examples in Appendix \ref{typeCD}. However, it remains a challenge to identify a suitable class of  representations -- which one would like to include all minimal affinizations -- and to furnish the necessary proofs. Note that minimal affinizations in other types are not thin \cite{HernandezMinAff} in general, which makes the analysis more difficult.

\medskip

The explicit form of several instances of the extended T-system is written in Section \ref{Tsyssec}. For example, the extended T-system for evaluation modules in type $A_N$ is  an equation for the functions $T_{i,k}^{m_1,\dots,m_s}$, $s=1,\dots, N$, where $k\in\Z$, $m_j\in\Z_{\geq 0}$, $i=1,\dots,N+1-s$ :
\be
T_{i,k}^{m_1,\dots,m_s-1}T_{i,k+2}^{m_1-1,\dots,m_s}=
T_{i,k}^{m_1,\dots,m_s} T_{i,k+2}^{m_1-1,\dots, m_s-1}
+T_{i-1,k+1}^{m_1,\dots,m_s-1}T_{i+1,k+1}^{m_1-1,\dots,m_s},\nn
\ee
with some natural boundary conditions. In particular, for $s=1$, the functions 
$T_{i,k}^m$ satisfy the usual T-system.

The T-systems are functional equations which are widely studied and very important in many parts of mathematics and physics.  We hope that many 
properties and applications of the T-systems can be generalized to the
extended T-systems. 
For example, we expect that the extended T-system has a restricted version,  similarly to the usual T-system \cite{IIKNS}.

\medskip

As a first application of the extended T-systems we consider the corresponding extended Q-system of type $B_2$. We use it to compute the decomposition of $B_2$ wrapping modules after restriction to the finite quantum group, generalizing the results of \cite{Cminaffrank2}.

\medskip

In types ADE, there is a recent remarkable conjecture on the cluster algebra relations in the category of finite-dimensional representations of affine quantum groups \cite{HernandezLeclerc} which includes the T-system. We should like to think that similar to the T-systems, the extended T-systems would provide a large family of explicit cluster relations in the cluster algebra.

\medskip

The paper is organized as follows. Section 2 contains background material. In Section 3 we recall the definition of snake modules, and define the neighbours of a prime snake. Then in Section 4 we state our main result, Theorem 4.1. The remainder of Section 4 exhibits various special cases of this theorem. In Section 5 we apply our result to compute the dimensions and $\uqg$-decompositions of wrapping modules in type $B_2$. To prove Theorem 4.1, we first recall in Section 6 the machinery of 
paths and moves from \cite{MY1}; then the proof itself is in Section 7. 
Appendix \ref{typeCD} is devoted to examples of 3-term relations in types C and D. Finally, in Appendix \ref{proofA}, we prove a theorem on thin, special, truncated $q$-characters which we need in Section 7.

\medskip
While this paper was in preparation, D. Hernandez and B. Leclerc informed
us that motivated by the cluster algebra conjecture they have also proved
a number of 3 term relations in types A and B.

\medskip
 
\begin{ack} 
We would like to thank D. Hernandez, B. Leclerc, V. Tarasov for interesting discussions.
EM would like to thank D. Hernandez for his hospitality during a visit to Paris in Summer 2010.
CY would like to thank IUPUI Department of Mathematics for hospitality during his visit
in Fall 2010 when part of this work was carried out. The research of CASY was funded by a Postdoctoral Research Fellowship (grant number P09771) from the Japan Society for the Promotion of Science (until end Oct 2010) and by the EPSRC (from Nov 2010, grant number EP/H000054/1). 
The research of EM is supported by the NSF, grant number DMS-0900984.
Computer programs to calculate $q$-characters were written in FORM \cite{FORM}.
\end{ack}

\section{Background}\label{qcharsec}
\subsection{Cartan data}
Let $\g$ be a complex simple Lie algebra of rank $N$ and $\h$ a Cartan subalgebra of $\g$. We identify $\h$ and $\h^*$ by means of the invariant inner product $\left<\cdot,\cdot\right>$  on $\g$ normalized such that the square length of the maximal root equals 2.  Let $I=\{1,\dots,N\}$ and let $\{\alpha_i\}_{i\in I}$ be a set of simple roots, with $\{\alpha^\vee_i\}_{i\in I}$ and $\{\omega_i\}_{i\in   I}$,
the sets of, respectively, simple coroots and fundamental weights. 
Let $C=(C_{ij})_{i,j\in I}$ denote the Cartan matrix. We have 
\be\nn 2 \left< \alpha_i, \alpha_j\right> = C_{ij} \left<   \alpha_i,\alpha_i\right>,\quad 2 \left< \alpha_i, \omega_j\right> = \delta_{ij}\left<\alpha_i,\alpha_i\right> .\ee Let $r^\vee$ be the maximal number of edges connecting two vertices of the Dynkin diagram of $\g$. Thus $r^\vee=1$ if $\g$ is of types A, D or E, $r^\vee = 2$ for types B, C and F and $r^\vee=3$ for $\mathrm G_2$.  Let $r_i= \alf r^\vee \left<\alpha_i,\alpha_i\right>$. The numbers $(r_i)_{i\in   I}$ are relatively prime integers. We set \be\nn D:= \mathrm{diag}(r_1,\dots,r_N),\qquad B := DC;\ee the latter is the symmetrized Cartan matrix, $B_{ij} = r^\vee \left<\alpha_i,\alpha_j\right>$.
 
Let $Q$ (resp. $Q^+$) and $P$ (resp. $P^+$) denote the $\Z$-span (resp. $\Z_{\geq 0}$-span) of the simple roots and fundamental weights respectively. Let $\leq$ be the partial order on $P$ in which $\lambda\leq \lambda'$ if and only if $\lambda'-\lambda\in Q^+$. 
 
Let $\gh$ denote the untwisted affine algebra corresponding to $\g$. 

Fix a $q \in \Cx$, not a root of unity. 
Define 
the $q$-numbers, $q$-factorial and $q$-binomial: \be\nn \qnum n := \frac{q^n-q^{-n}}{q-q^{-1}},\quad \qnum n ! := \qnum n \qnum{n-1} \dots \qnum 1,\quad \qbinom n m := \frac{\qnum n !}{\qnum{n-m} ! \qnum m !}.\ee

\subsection{Quantum Affine Algebras}
The \emph{quantum affine algebra} $\uqgh$ in Drinfeld's new realization, \cite{Drinfeld} is generated by $x_{i,n}^{\pm}$ ($i\in I$, $n\in\Z$), $k_i^{\pm 1}$ ($i\in I$), $h_{i,n}$ ($i\in I$, $n\in \Z\setminus\{0\}$) and central elements $c^{\pm 1/2}$, subject to the following relations:
\begin{align}
  k_ik_j = k_jk_i,\quad & k_ih_{j,n} =h_{j,n}k_i,\nn\\
  k_ix^\pm_{j,n}k_i^{-1} &= q^{\pm B_{ij}}x_{j,n}^{\pm},\nn\\
 \label{hxpm} [h_{i,n} , x_{j,m}^{\pm}] &= \pm \frac{1}{n} [n B_{ij}]_q c^{\mp
    {|n|/2}}x_{j,n+m}^{\pm},\\ 
x_{i,n+1}^{\pm}x_{j,m}^{\pm} -q^{\pm B_{ij}}x_{j,m}^{\pm}x_{i,n+1}^{\pm} &=q^{\pm
    B_{ij}}x_{i,n}^{\pm}x_{j,m+1}^{\pm}
  -x_{j,m+1}^{\pm}x_{i,n}^{\pm},\nn\\ [h_{i,n},h_{j,m}]
  &=\delta_{n,-m} \frac{1}{n} [n B_{ij}]_q \frac{c^n -
    c^{-n}}{q-q^{-1}},\nn\\ [x_{i,n}^+ , x_{j,m}^-]=\delta_{ij} & \frac{
    c^{(n-m)/2}\phi_{i,n+m}^+ - c^{-(n-m)/2} \phi_{i,n+m}^-}{q^{r_i} -
    q^{-r_i}},\nn\\
  \sum_{\pi\in\Sigma_s}\sum_{k=0}^s(-1)^k\left[\begin{array}{cc} s \nn\\
      k \end{array} \right]_{q^{r_i}} x_{i, n_{\pi(1)}}^{\pm}\ldots
  x_{i,n_{\pi(k)}}^{\pm} & x_{j,m}^{\pm} x_{i,
    n_{\pi(k+1)}}^{\pm}\ldots x_{i,n_{\pi(s)}}^{\pm} =0,\ \
  s=1-C_{ij},\nn
\end{align}
for all sequences of integers $n_1,\ldots,n_s$, and $i\ne j$, where $\Sigma_s$ is the symmetric group on $s$ letters, and $\phi_{i,n}^{\pm}$'s are determined by the formula
\begin{equation} \label{phidef} \phi_i^\pm(u) :=
  \sum_{n=0}^{\infty}\phi_{i,\pm n}^{\pm}u^{\pm n} = k_i^{\pm 1}
  \exp\left(\pm(q-q^{-1})\sum_{m=1}^{\infty}h_{i,\pm m} u^{\pm
      m}\right).
\end{equation}
There exist a coproduct, counit and antipode making $\uqgh$ into a Hopf algebra. 

The subalgebra of $\uqgh$ generated by $(k_i)_{i\in I}$, $(x^\pm_{i,0})_{i\in I}$ is a Hopf subalgebra of $\uqgh$ and is isomorphic as a Hopf algebra to $\uqg$, the quantized enveloping algebra of $\g$.
In this way, $\uqgh$-modules restrict to $\uqg$-modules.



\subsection{Finite-dimensional representations and $q$-characters}\label{ssec:fdreps}
A representation $V$ of $\uqgh$ is \emph{of type $1$} if $c^{\pm 1/2}$ acts as the identity on $V$ and 
\be V = \bigoplus_{\lambda\in P} V_\lambda\,\,\,,\qquad\quad 
V_\lambda = \{ v \in V: k_i \on v = q^{\langle \alpha_i, \lambda \rangle} v\}.\label{gwts}\ee 
In what follows, all representations will be assumed to be of type 1 without further comment.
The decomposition (\ref{gwts}) of a finite-dimensional representation $V$ into its $\uqg$-weight spaces can be refined by decomposing it into the Jordan subspaces of the mutually commuting $\phi_{i,\pm r}^\pm$ defined in (\ref{phidef}),
\cite{FR}: 
\be V = \bigoplus_{\bs\gamma} V_{\bs\gamma}\,\,, \qquad \bs\gamma = (\gamma_{i,\pm r}^\pm)_{i\in I, r\in \Z_{\geq 0}}, \quad \gamma_{i,\pm r}^\pm \in \mathbb C\label{lwdecomp}\ee 
where \be V_{\bs\gamma} = \{ v \in V : \exists k \in \mathbb N, \,\, \forall i \in I, m\geq 0 \quad \left(
  \phi_{i,\pm m}^\pm - \gamma_{i,\pm m}^\pm\right)^k \on v = 0 \} \,. \nn\ee 
If $\dim (V_{\bs\gamma}) >0$, $\bs\gamma$ is called an \emph{$l$-weight} of $V$. For every finite-dimensional representation of $\uqgh$, the $l$-weights are known \cite{FR} to be of the form 
\be\nn \gamma_i^\pm(u) := \sum_{r =0}^\8 \gamma_{i,\pm r}^\pm u^{\pm r} 
 = q^{r_i\deg Q_i - r_i\deg R_i} \, \frac{Q_i(uq^{-r_i}) R_i(uq^{r_i})}{Q_i(uq^{r_i}) R_i(uq^{-r_i})} \,,\ee 
where the right hand side is to be treated as a formal series in positive (resp. negative) integer powers of $u$, and $Q_i$ and $R_i$ are polynomials of the form 
\be\nn Q_i(u) = \prod_{a\in \Cx} \left( 1- ua\right)^{w_{i,a}}, \quad\quad 
       R_i(u) = \prod_{a\in \Cx} \left( 1- ua\right)^{x_{i,a}}, \ee 
for some $w_{i,a}, x_{i,a}\geq 0$, $i\in I,a\in \Cx$.  Let $\P$ denote the free abelian multiplicative group of  monomials in infinitely many formal variables $(Y_{i,a})_{i\in I,a\in \Cx}$. $\P$ is in bijection with the set of $l$-weights $\bs\gamma$ of the form above according to 
\be\label{lwdef} \bs\gamma=\bs\gamma(m) \quad\text{with}\quad m = \prod_{i \in I, a\in \Cx} Y_{i,a}^{w_{i,a}-x_{i,a}}.\ee
We identify elements of $\P$ with $l$-weights of finite-dimensional representations in this way, and henceforth write $V_m$ for $V_{\bs\gamma(m)}$.  Let $\Z\P=\Zys$ be the ring of Laurent polynomials in $(Y_{i,a})_{i\in I,a\in \Cx}$ with integer coefficients.
The $q$-character map $\chi_q$ \cite{FR} is then defined by 
\be \chi_q (V) = \sum_{m\in\P} \dim\left(V_m\right) m.\nn\ee
Let $\groth\uqgh$ be the Grothendieck ring of finite-dimensional representations of $\uqgh$, and let us write $\left[ V \right]\in \groth\uqgh$ for the class of a finite-dimensional $\uqgh$-module $V$. The $q$-character map defines an injective ring homomorphism \cite{FR} 
\be \chi_q : \groth\uqgh \longrightarrow \Zys. \nn\ee

For any finite-dimensional representation $V$ of $\uqgh$, we let
\be \mchiq(V) := \left\{ m\in \P: m \text{ is a monomial of } \chi_q(V) \right\}.\nn\ee 

For each $j\in I$, a monomial $m = \prod_{i \in I, a\in \Cx} Y_{i,a}^{u_{i,a}}$ is said to be \emph{$j$-dominant} (resp. \emph{$j$-anti-dominant}) if and only if $u_{j,a}\geq 0$ (resp. $u_{j,a}\leq 0$) for all $a\in\Cx$. A monomial is (\emph{anti-})\emph{dominant} if and only if it is $i$-(anti-)dominant for all $i\in I$. Let $\Pp\subset \P$ denote the set of all dominant monomials.

If $V$ is a finite-dimensional representation of $\uqgh$ and $m\in\mchiq(V)$ is dominant, then a non-zero vector $\ket{m}\in V_m$ is called a \emph{highest $l$-weight vector}, with \emph{highest $l$-weight} $\bs \gamma(m)$, if and only if 
\be  \phi^\pm_{i,\pm t}\on\ket m= \ket m\gamma(m)^\pm_{i,\pm t} \quad \text{ and }\quad x^+_{i,r} \on \ket m = 0,\quad\text{for all } i\in I, r\in \Z, t\in \Z_{\geq 0} .\nn\ee 
A finite-dimensional representation $V$ of $\uqgh$ is said to be a \emph{highest $l$-weight representation} if $V= \uqgh \on \ket m$ for some highest $l$-weight vector $\ket m\in V$.

It is known \cite{CPbook,CP94} that for each $m\in\P^+$ there is a unique finite-dimensional irreducible representation, denoted $\L(m)$, of $\uqgh$ that is highest $l$-weight with highest $l$-weight $\bs\gamma(m)$, and moreover every finite-dimensional irreducible $\uqgh$-module is of this form for some $m\in \P^+$. 

For each $m\in \P^+$, there exists a highest $l$-weight representation $W(m)$, called the \emph{Weyl module}, with the property that every highest $l$-weight representation of $\uqgh$ with highest $l$-weight $\bs\gamma(m)$ is a quotient of $W(m)$ \cite{CPweyl}. 

A finite-dimensional $\uqgh$-module $V$ is said to be \emph{special} 
if and only if $\chi_q(V)$ has exactly one dominant monomial. It is \emph{anti-special} if and only if $\chi_q(V)$ has exactly one anti-dominant monomial. 
It is \emph{thin} 
if and only if no $l$-weight space of $V$ has dimension greater than 1. In other words, the module is thin if and only if the $(\phi_{i,\pm r}^\pm)_{i\in I,r\in \Z_{\geq 0}}$ are simultaneously diagonalizable with joint simple spectrum. A finite-dimensional $\uqgh$-module $V$ is said to be \emph{prime} if and only if it is not isomorphic to a tensor product of two non-trivial $\uqgh$-modules \cite{CPprime}.  

Let $\chi:\groth\uqg \to \Z [e^{\pm \omega_i}]_{i\in I}$ be the $\uqg$-character homomorphism. Let $\wt:\P \to P$ be the homomorphism of abelian groups defined by $\wt: \YY i a \mapsto \omega_i$. The map $\wt{}$ induces in an obvious way a map $\Z\P\to \Z[e^{\pm \omega_i}]_{i\in I}$ which we also call $\wt$. Then the following diagram commutes \cite{FR}:
\be\begin{tikzpicture}    
\matrix (m) [matrix of math nodes, row sep=3em,    
column sep=4em, text height=2ex, text depth=1ex]    
{ \groth\uqgh &  \mathbb Z \P     \\    
\groth\uqg & \mathbb Z [e^{\pm \omega_i}]_{i\in I}     \\   };    
\path[->,font=\scriptsize]    
(m-1-1) edge node [above] {$\chi_q$} (m-1-2)    
(m-2-1) edge node [above] {$\chi$} (m-2-2)    
(m-1-1) edge node [left] {$\res$} (m-2-1)    
(m-1-2) edge node [right] {$\wt$} (m-2-2);    
\end{tikzpicture}\nn\ee    
where $\res:\groth\uqgh\to\groth\uqg$ is the restriction homomorphism.

Define $A_{i,a}\in \P$, $i\in I,a\in\Cx$, by 
\be\label{adef} A_{i,a} = Y_{i,aq^{r_i}} Y_{i,aq^{-r_i}} \prod_{C_{ji}=-1} Y_{j,a}^{-1} \prod_{C_{ji}=-2} Y_{j,aq}^{-1} Y_{j,aq^{-1}}^{-1} \prod_{C_{ji}=-3} Y_{j,aq^2}^{-1} Y_{j,a}^{-1} Y_{j,aq^{-2}}^{-1}.\ee 
Let $\Q$ be the subgroup of $\P$ generated by $A_{i,a}$, $i\in I,a\in\Cx$. Let $\Q^\pm$ be the monoid generated by $A_{i,a}^{\pm 1}$, $i\in I,a\in\Cx$. Note that $\wt A_{i,a} = \al_i$. There is a partial order $\leq$ on $\P$ in which $m\leq m'$ if and only if $m' m^{-1} \in \Q^+$. It is compatible with the partial order on $P$ in the sense that $m\leq m'$ implies $\wt m\leq \wt m'$.

We have \cite{FM} that for all $m_+\in \P^+$, 
\be \mchiq(\L(m_+)) \subset m_+ \Q^- \label{imchiq}.\ee

For all $i\in I, a\in \Cx$ let $u_{i,a}$
be the homomorphism of abelian groups $\P \to \mathbb Z$ such that
\be u_{i,a}(Y_{j,b})=\begin{cases} 1 & i=j \text{ and } a=b \\ 0 & \text{otherwise.}\end{cases}
\label{udef}\ee
Let 
$v$ be the homomorphisms of abelian groups $\Q \to \mathbb Z$ such that
\be 
  v(A_{j,b})=- 1 .\nn\ee
Note that the $(A_{i,a})_{i\in I,a\in \Cx}$ are algebraically independent, so 
$v$ is well-defined.

For each $j\in I$ we denote by $\uqsl j$ the copy of $\uqslp j$ generated by $c^{\pm1/2}, (x_{j,r}^\pm)_{r\in \Z}$, $(\phi_{j,\pm r}^\pm)_{r\in \Z_{\geq 0}}$. Let 
\be \beta_j : \mathbb Z \left[ Y^{\pm 1}_{i,a}\right]_{i\in I; a \in \Cx} \to \mathbb Z \left[ Y^{\pm 1}_{j,a}\right]_{a \in \Cx} \nn\ee     
be the ring homomorphism which  sends, for all $a\in\Cx$, $Y_{k,a}\mapsto 1$ for all $k \neq j$ and $Y_{j,a}\mapsto Y_{j,a}$.
For each $j\in I$, there exists \cite{FM} a ring homomorphism    
\be\tau_j : \Z\left[ Y^{\pm 1}_{i,a}\right]_{i\in I; a \in \Cx} \to \mathbb Z \left[ Y^{\pm 1}_{j,a}\right]_{a \in \Cx} \otimes \mathbb Z \left [ Z_{k, b}^{\pm 1}\right ]_{k \neq j; b \in \Cx},\nn\ee  
where $(Z_{k,b}^{\pm 1})_{k\neq j,b\in\Cx}$ are certain new formal variables, with the following properties:
\begin{enumerate}[i)] \item $\tau_j$ is injective. \item $\tau_j$ refines $\beta_j$ in the sense that $\beta_j$ is the composition of $\tau_j$ with the homomorphism $\mathbb Z [ Y^{\pm 1}_{j,a}]_{a \in \Cx} \otimes \mathbb Z [ Z_{k, b}^{\pm 1}]_{k \neq j; b \in \Cx}\to \mathbb Z [Y^{\pm 1}_{j,a}]_{a \in \Cx}$ which sends $Z_{k,b}\mapsto 1$ for all $k\neq j$, $b\in \Cx$.
\item In the diagram    
\be\label{tauj}\begin{tikzpicture}    
\matrix (m) [matrix of math nodes, row sep=3em,    
column sep=4em, text height=2ex, text depth=1ex]    
{     
\mathbb Z \left[ Y^{\pm 1}_{i,a}\right]_{i\in I; a \in \Cx}    
 & \mathbb Z \left[ Y^{\pm 1}_{j,a}\right]_{a \in \Cx} \otimes \mathbb Z \left [ Z_{k, b}^{\pm 1}\right ]_{k \neq j; b \in \Cx} \\    
\mathbb Z \left[ Y^{\pm 1}_{i,a}\right]_{i\in I; a \in \Cx}      
 & \mathbb Z \left[ Y^{\pm 1}_{j,a}\right]_{a \in \Cx} \otimes \mathbb Z \left [ Z_{k, b}^{\pm 1}\right ]_{k \neq j; b \in \Cx}\\    
};    
\path[->,font=\scriptsize,shorten <= 2mm,shorten >= 2mm]    
(m-1-1) edge node [above] {$\tau_j$} (m-1-2)    
(m-2-1) edge node [above] {$\tau_j$} (m-2-2)    
(m-1-1) edge (m-2-1)    
(m-1-2) edge (m-2-2);    
\end{tikzpicture}\ee    
let the right vertical arrow be multiplication by $\beta_j(A_{j,c}^{-1}) \otimes 1$; then the diagram commutes if and only if the left vertical arrow is multiplication by $A^{-1}_{j,c}$.
\end{enumerate}

\subsection{Truncated $q$-characters}
Given a set of monomials $\mc R\subset \P$, let $\Z\mc R$ denote the $\Z$-module of formal linear combinations of elements of $\mc R$ with integer coefficients.  Define
\be\trunc_{\mc R}: \P\to\mc R;\quad m \mapsto \begin{cases} m & m\in \mc R \\ 0 & m\notin \mc R\end{cases} \ee 
and extend $\trunc_{\mc R}$ as a $\Z$-module map $\Z \P\to \Z\mc R$. 

Given a subset $U\subset I\times \Cx$, let $\Q_U$ be the subgroup of $\Q$ generated by $A_{i,a}$, $(i,a)\in U$. Let $\Q^\pm_U$ be the monoid generated by $A^{\pm 1}_{i,a}$, $(i,a)\in U$. 
For each $n\in \Z_{\geq 0}$, let $\Q^\pm_{U,(=n)}$  be the set of monomials in the variables $A^{\pm 1}_{i,a}$, $(i,a)\in U$ of degree exactly $n$. Similarly, let $\Q^{\pm}_{U,(\leq n)}$ (resp. $\Q^\pm_{U,(>n)}$) be the set of monomials in the variables $A^{\pm 1}_{i,a}$, $(i,a)\in U$ of degree $\leq n$ (resp. $>n$).

Let us call $\trunc_{m_+ Q_U^{-1}} \chi_q(L(m_+))$ the $q$-character  of $L(m_+)$ \emph{truncated} to $U$. In this we follow D. Hernandez and B. Leclerc, who made use of $q$-characters truncated to sets of the form $I\times \{a,aq,\dots,aq^{r-1}\}$ in  \cite{HernandezLeclerc}.

Given $V$, a highest $l$-weight $\uqgh$-module with highest monomial $m_+\in\P^+$, and $U\subset I\times \Cx$, we say \emph{$V$ is thin in $U$} if and only if $\trunc_{m_+\Q^-_U}(\chi_q(V))$ has no monomial with multiplicity greater than 1. We say \emph{$V$ is special in $U$} if and only if $m_+$ is the unique dominant monomial of $\trunc_{m_+\Q^-_U}(\chi_q(V))$. 

In the proof of Proposition \ref{TBsimple} below we need the following result which gives sufficient conditions for a given set of monomials to be the truncation of a $q$-character. It is a generalized version of a similar result in \cite{MY1}.
\begin{thm}\label{thmA}
Let $U\subset I\times \Cx$. Let $m_+\in \P^+$. Suppose that $\mc M\subset \P$ is a finite set of distinct monomials such that:
\begin{enumerate}[(i)]
\item $\mc M \subset m_+  \Q^-_U$; \label{incone}
\item $\{m_+\} = \P^+ \cap \mc M$; \label{monlydom} 
\item For all $m\in \mc M$ and all $(i,a)\in U$, if $mA_{i,a}^{-1}\notin\mc M$ then $mA_{i,a}^{-1} A_{j,b}\notin \mc M$ unless $(j,b)=(i,a)$;\label{onewayback}
\item For all $m\in \mc M$ and all $i\in I$ there exists a unique $i$-dominant $M\in \mc M$ such that
\be\nn \trunc_{\beta_i(M \Q^-_U)}\chi_q(\L(\beta_i(M))) = \sum_{m'\in m\Q_{\{i\}\times \Cx}\cap \mc M} \beta_i(m') .\ee\label{inthinsimple}
\end{enumerate}
Then
\be \label{headch} \trunc_{m_+ \Q^-_U}\chi_q(\L(m_+)) = \sum_{m\in \mc M} m \ee 
and the module $\L(m_+)$ is thin in $U$ and special in $U$.
\end{thm}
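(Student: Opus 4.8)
The strategy is to identify the right-hand side $\sum_{m\in\mc M}m$ as a truncated $q$-character by pinning it down via restrictions to each $\uqsl i$-subalgebra and then applying the constraints (\ref{incone})--(\ref{inthinsimple}). First I would recall from \cite{FM} and \cite{MY1} the basic mechanism: the $q$-character of any finite-dimensional module is invariant under the ``$i$-th Weyl group action'', and more precisely $\chi_q(\L(m_+))$ can be reconstructed from the collection of its restrictions $\beta_i$ composed with the $\uqsl i$-module structure, because each $l$-weight monomial $m$ lies in an $\{i\}\times\Cx$-string whose $\beta_i$-image is the $q$-character of a (tensor product of) $\uqslp i$-module(s). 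The key external fact I will lean on is that for $\uqslp i\cong\uqslt$ the irreducible $q$-characters are completely understood and thin, so condition (\ref{inthinsimple}) exactly says that, locally at each $i$, the set $\mc M$ looks like the $U$-truncation of the correct $\uqsl i$-picture.

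The argument then runs as follows. Let $S:=\sum_{m\in\mc M}m$ and let $\chi:=\trunc_{m_+\Q^-_U}\chi_q(\L(m_+))$. By (\ref{incone}) both $S$ and $\chi$ are supported in $m_+\Q^-_U$, and by \eqref{imchiq} every monomial of $\chi_q(\L(m_+))$ lies in $m_+\Q^-$, so the monomials of $\chi$ lie in $m_+\Q^-\cap m_+\Q^-_U\subseteq m_+\Q^-_U$ — consistency is automatic. I would argue by downward induction on the $\Q^-$-degree (equivalently, using the partial order $\leq$ on $\P$, starting from $m_+$ and descending). The base case is the monomial $m_+$ itself, which by (\ref{monlydom}) is the unique dominant monomial in $\mc M$ and is the highest monomial of $\chi$ with multiplicity $1$. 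For the inductive step, fix a monomial $m$ and suppose $S$ and $\chi$ agree on all monomials strictly above $m$. If $m$ is not $i$-dominant for some $i$, I use the local $\uqsl i$-structure: the truncated $\uqsl i$-string through $m$ is controlled by condition (\ref{inthinsimple}), which forces the multiplicity of $m$ in $\chi$ to equal the number of elements of $m\Q_{\{i\}\times\Cx}\cap\mc M$ that produce $m$ — and thinness of the relevant $\uqslt$-modules makes this at most $1$; comparing with $S$ (where $m$ appears with multiplicity exactly $1$ iff $m\in\mc M$) gives the match. Condition (\ref{onewayback}) is what guarantees that these local strings glue consistently — it prevents a monomial from being ``reachable'' along one $A_{i,a}^{-1}$ direction but spuriously produced along another, which is precisely the obstruction to the naive gluing of $\uqsl i$-pictures. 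Once $\chi=S$ as elements of $\Z\P$, thinness in $U$ is immediate from the fact that each $m\in\mc M$ is distinct (so multiplicity $1$), and speciality in $U$ is exactly (\ref{monlydom}).

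\textbf{Main obstacle.} The delicate point is the inductive step when $m$ is $i$-dominant for \emph{every} $i$ but $m\neq m_+$ — a priori such an $m$ could fail to lie in $\mc M$ yet still appear in $\chi$, or vice versa. By (\ref{monlydom}) no such $m$ lies in $\mc M$, and by speciality-type reasoning it should not appear in $\chi$ either; ruling this out rigorously requires showing that every monomial of $\chi_q(\L(m_+))$ in the truncation cone is connected to $m_+$ through a chain of non-$i$-dominant monomials for successive $i$, i.e. that one never gets ``stuck'' at a non-highest dominant monomial. This is where the generalization of the \cite{MY1} argument is genuinely needed, and I expect it to consume the bulk of the work: one must combine \eqref{imchiq}, the commuting diagram for $\tau_j$ (property (iii) of $\tau_j$), and the local conditions (\ref{onewayback})--(\ref{inthinsimple}) to run an induction showing that the set of monomials forced into $\chi$ by the $\uqsl i$-structures is exactly $\mc M$ and no larger. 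The remaining steps (base case, thinness, speciality) are routine once this connectivity/closure statement is in hand.
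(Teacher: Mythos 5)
Your outline correctly identifies the inductive framework (induction on degree in $\Q^-_U$, base case from (\ref{monlydom})), the role of the local $\uqsl i$-pictures via (\ref{inthinsimple}), the use of (\ref{onewayback}) as a ``gluing'' constraint, and the concluding remarks on thinness and speciality. This is all in accord with the paper's argument.

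However, you have not resolved the technical crux, and you have misidentified where it lies. The dominant-monomial scenario you flag in your ``Main obstacle'' paragraph is handled in the paper as an automatic by-product of a three-way case split, not by a separate connectivity argument; the real work is elsewhere. Concretely: the paper first uses Proposition~\ref{Aprop} (from \cite{MY1}, on the action of $x^\pm_{j,r}$ between $l$-weight spaces) to show that every monomial at degree $n$ in the truncation is $mA_{i,a}^{-1}$ for some $m$ at degree $n-1$ (lest a proper submodule be generated); combined with the inductive hypothesis this restricts attention to $m\in\mc M_{n-1}$ and $(i,a)\in U$. Then, via an explicit $\mf{sl}_2$ lemma (Lemma~\ref{thinoverXsl2lem}), one lands in one of three cases according to whether $\beta_i(mA_{i,a}^{-1})$ lies in $\chi_q(\L(\beta_i(M)))$, in $\chi_q(W(\beta_i(M)))\setminus\chi_q(\L(\beta_i(M)))$, or in neither. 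The genuinely delicate case is the middle one: $\beta_i(mA_{i,a}^{-1})$ is a ``phantom'' monomial that the Weyl module would contribute but the simple $\uqsl i$-module does not. One must show $mA_{i,a}^{-1}$ then cannot appear in $\chi_q(\L(m_+))$, and this is where (\ref{onewayback}) is really consumed: it forces $mA_{i,a}^{-1}A_{j,b}\notin\mc M$ for $(j,b)\neq(i,a)$, so by induction plus (\ref{imchiq}) no neighbouring degree-$(n-1)$ $l$-weight space other than $\L(m_+)_m$ can map onto $\L(m_+)_{mA_{i,a}^{-1}}$ under any $x^-_{j,r}$, while the $\uqsl i$-picture shows that a putative vector there is not in $\Span_r x^-_{i,r}\L(m_+)_m$ either (because $\L(\beta_i(M))$ is irreducible and omits $\beta_i(mA_{i,a}^{-1})$) — so it would generate a proper submodule, a contradiction. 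Your proposal gestures at this role of (\ref{onewayback}) but never constructs the argument, and explicitly defers ``the bulk of the work.'' That deferred work is the proof.

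Two smaller inaccuracies. First, the $\tau_i$ homomorphism and diagram (\ref{tauj}) are used in the paper for the \emph{easy} direction (case~(I): to lift the presence of $\beta_i(mA_{i,a}^{-1})$ in the $\uqsl i$-character to the presence of $mA_{i,a}^{-1}$ in $\chi_q(\L(m_+))$), not for the obstruction you describe. Second, your claim that irreducible $\uqslt$-$q$-characters are ``thin'' as a general external fact is not correct and is not needed; the thinness in $U$ of $\L(\beta_i(M))$ is instead an output of hypothesis (\ref{inthinsimple}) together with the distinctness of the monomials in $\mc M$.
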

\begin{proof} The proof is given in Appendix \ref{proofA}. \end{proof}

\subsection{Affinizations of $\uqg$-modules} \label{sec:minaff} 
For $\mu\in P^+$, let $V(\mu)$ be the (unique up to isomorphism) simple $\uqg$-module with highest weight $\mu$.
$\L(m)$, $m\in \P^+$, is said to be an \emph{affinization} of $V(\mu)$ if $\wt m=\mu$ \cite{Cminaffrank2}.
Two affinizations are said to be equivalent if they are isomorphic as $\uqg$-modules. Let $[[\L(m)]]$ denote the equivalence class of $\L(m)$, $m\in \P^+$. For each $\lambda\in P^+$ define $\ma \lambda := \left\{[[\L(m)]] : \wt m = \lambda \right\}$, the set of equivalence classes of affinizations of $V(\lambda)$.
Any finite-dimensional $\uqg$-module $V$ is isomorphic to a direct sum of finite-dimensional simple $\uqg$-modules; for each $\lambda\in P^+$ let $[V:V(\lambda)]$ denote the multiplicity of $V(\lambda)$ in $V$. There is a partial order $\leq$ on equivalence classes of affinizations in which $[[\L(m)]] \leq [[\L(m')]]$ if and only if for all $\nu\in P^+$ either
\begin{enumerate}[(i)]
\item $[\L(m):V(\nu)] \leq [\L(m'):V(\nu)]$, or
\item there exists a $\mu\in P^+$, $\mu\geq \nu$, such that $[\L(m):V(\mu)]<[\L(m'):V(\mu)]$.
\end{enumerate}
For all $\lambda\in P^+$, $\ma\lambda$ is a finite poset \cite{Cminaffrank2}.
A \emph{minimal affinization} of $V(\lambda)$, $\lambda\in P^+$, is a minimal element of $\ma \lambda$ with respect to the partial ordering \cite{Cminaffrank2}.
Thus a minimial affinization is by definition an equivalence class of $\uqgh$-modules; but we shall refer also to the elements of such a class -- the modules themselves -- as minimal affinizations.
Note that in type A all minimal affinizations are in fact \emph{evaluation representations}.

\section{Snake modules in types A and B}\label{sec:snakes}
In this section we introduce the class of representations to which our results apply, namely the snake modules, defined in \cite{MY1}. 
We specialize to types A and B: henceforth, $\g$ is either $\mf a_N$ or $\mf b_N$. 

\subsection{Notation, and the subring $\Z[Y^{\pm 1}_{i,k}]_{(i,k)\in\It}$} 
We define a subset $\It\subset I\times \Z$ as follows. 
\begin{enumerate}[Type A:]
\item Let $\It := \{ (i,k) \in I \times \Z : i-k\equiv 1 \mod 2 \}$.
\item Let $\It := \{(N,2k+1):k\in \Z\} \sqcup \{(i,k) \in I\times \Z : i<N \text{ and } k\equiv 0 \mod 2\}$.
\end{enumerate}
For the remainder of this paper, we pick and fix once and for all an $a\in \Cx$, and work solely with representations whose $q$-characters lie in the subring $\Z[Y_{i,aq^k}^{\pm 1}]_{(i,k)\in \It}$. These form a subcategory of the category of all finite-dimensional $\uqgh$-modules closed under taking tensor products.  
It is helpful to define also \be \Iw := \{(i,k) : (i,k-r_i)\in \It \}\ee
for we have, as a refinement of (\ref{imchiq})
\be\nn\forall\, m_+\in \Z[Y_{i,aq^k}]_{(i,k)\in \It},\,\,\forall\, m\in \mchiq(\L(m_+)), \qquad m m_+^{-1} \in \Z[ A_{i,aq^k}^{-1}]_{(i,k)\in \Iw}.\ee


From now on it is convenient to write, by an abuse of notation,
\be\nn Y_{i,k} := Y_{i,aq^k}, \quad A_{i,k} := A_{i,aq^k}\label{Yikdef},\quad u_{i,k} := u_{i,aq^k}\ee 
for all $(i,k)\in I \times \Z$ (\confer (\ref{udef}) for the definition of $u_{i,aq^k}$).  

\subsection{Snake position and minimal snake position}\label{snakepos} Let $(i,k)\in \It$. A point $(i',k') \in \It$ is said to be \emph{in snake position} with respect to $(i,k)$ if and only if
\begin{enumerate}[Type A:]
\item $k'-k\geq |i'-i|+2$. 
\item  $ $
  \begin{align*}
   &i=i'=N :  &k'-k  &\geq 2 & \text{ and }\quad k'-k&\equiv 2 \mod 4\\
  &i\neq i'=N \text{ or } i'\neq i=N :  &k'-k&\geq 2|i'-i| +3 & \text{ and }\quad k'-k &\equiv 2|i'-i| - 1 \mod 4\\
  &i<N \text{ and } i'<N:  &k'-k&\geq 2|i'-i|+4 & \text{ and }\quad k'-k &\equiv 2|i'-i| \mod 4. \end{align*}
\end{enumerate} 
The point $(i',k')$ is in \emph{minimal} snake position to $(i,k)$ if and only if $k'-k$ is equal to the given lower bound. 
\subsection{Prime snake position} 
Let $(i,k)\in \It$. We say that $(i',k')\in \It$ is in \emph{prime snake position} with respect to $(i,k)$ if and only if 
\begin{enumerate}[Type A:]
\item $i'+i\geq k'-k\geq |i'-i|+2$. 
\item  $ $
  \begin{align*}
   &i=i'=N :  &4N-2 &\geq &k'-k  &\geq 2 & \text{ and }\quad k'-k&\equiv 2 \mod 4\\
  &i\neq i'=N \text{ or } i'\neq i=N :&2i'+2i-1 &\geq &k'-k&\geq 2|i'-i| +3 & \text{ and }\quad k'-k &\equiv 2|i'-i| - 1 \mod 4\\
  &i<N \text{ and } i'<N: &2i'+2i &\geq &k'-k&\geq 2|i'-i|+4 & \text{ and }\quad k'-k &\equiv 2|i'-i| \mod 4. \end{align*}
\end{enumerate}

\subsection{Snakes and snake modules} A finite sequence $(i_t,k_t)$, $1\leq t \leq \T$, $\T\in \Z_{\geq 0}$, of points in $\It$ is a \emph{snake} if and only if for all $2\leq t \leq \T$, $(i_t,p_t)$ is in snake position with respect to $(i_{t-1},k_{t-1})$. It is a \emph{minimal} (resp. \emph{prime}) snake if and only if successive points are in minimal (resp. prime) snake position. Minimal snakes are prime. 

The simple module $\L(m)$ is a \emph{snake module} (resp. a \emph{minimal snake module}) if and only if $m=\prod_{t=1}^\T \YY {i_t}{k_t}$ for some snake $(i_t,k_t)_{1\leq t\leq \T}$ (resp. for some minimal snake $(i_t,k_t)_{1\leq t\leq \T}$).

The meaning of snake position is illustrated in Figure \ref{snakeposfig}. Here and subsequently, in type B we draw the images of points in $\It$ under the  injective map $\iota:\It \to \Z \times \Z$ defined as follows:
\begin{align} \text{Type B:}&\qquad \iota:  (i,k)  \mapsto 
\begin{cases} (2i,k) & i<N\text{ and } 2N+k-2i \equiv 2\mod 4\\
             (4N-2-2i,k) & i<N\text{ and } 2N+k-2i \equiv 0\mod 4\\
                                 (2N-1,k)   & i = N .\end{cases}\label{iotadef}\\
  \text{Type A:}& \qquad \iota: (i,k)\mapsto (i,k).\nn\end{align} 
(We define $\iota$ in type A purely in order to make certain statements more uniform in what follows.)

\begin{figure} \caption{\label{snakeposfig} The points $\scriptscriptstyle\square$ (resp. $\scriptscriptstyle\blacksquare$) are in snake position (resp. minimal snake position) to the point marked $\circ$. Of these, the points on or inside the dashed polygon shown are in prime snake position to $\circ$. In type $B$, we have plotted the images of points under the map $\iota$, \confer  (\ref{iotadef}).}
\be\nn\begin{tikzpicture}[scale=.5,yscale=-1]
\draw[help lines] (0,0) grid (5,6);
\foreach \y in {1,2,3,4,5,6,7} {\node at (-1,\y-1) {$\scriptstyle\y$};}
\draw (1,-1) -- (4,-1);
\foreach \x in {1,2,3,4} {\filldraw[fill=white] (\x,-1) circle (2mm) node[above=1mm] {$\scriptstyle\x$}; }
\draw[thick] (2,0) circle (1.5mm);
\draw[gray,dashed] (0,2) -- (2,0) -- (5,3)--(3,5)--cycle;
\foreach \x/\y in {1/3,2/2,3/3,4/4} 
{\node[regular polygon, regular polygon sides=4,draw,fill=black,inner sep=.4mm] at (\x,\y) {};}
\foreach \x/\y in {1/5,2/4,2/6,3/5,4/6} 
{\node[regular polygon, regular polygon sides=4,draw,fill=white,inner sep=.4mm] at (\x,\y) {};}
\end{tikzpicture}
\nn\ee
\be
\begin{tikzpicture}[scale=.35,yscale=-1]
\draw[help lines] (0,0) grid (14,18);
\draw (2,-1) -- (6,-1); \draw (8,-1) -- (12,-1);
\draw[double,->] (6,-1) -- (6.8,-1); \draw[double,->] (8,-1) -- (7.2,-1);
\filldraw[fill=white] (7,-1) circle (2mm) node[above=1mm] {$\scriptstyle 4$};
\foreach \x in {1,2,3} {
\filldraw[fill=white] (2*\x,-1) circle (2mm) node[above=1mm] {$\scriptstyle\x$}; 
\filldraw[fill=white] (2*7-2*\x,-1) circle (2mm) node[above=1mm] {$\scriptstyle\x$}; }
\foreach \y in {0,2,4,6,8,10,12,14,16,18} {\node at (-1,\y) {$\scriptstyle\y$};}
\draw[thick] (4,2) circle (2mm);
\draw[gray,dashed] (0,6) -- (4,2) -- (14,12)--(10,16)--cycle;
\foreach \x/\y in {2/8,4/6,4/10,6/8,6/12,7/9,7/13,2/12,2/16,4/14,4/18,6/16,7/17} 
{\node[regular polygon, regular polygon sides=4,draw,fill=white,inner sep=.4mm] at (\x,\y) {};}
\foreach \x/\y in {2/8,4/6,6/8,7/9} 
{\node[regular polygon, regular polygon sides=4,draw,fill=black,inner sep=.4mm] at (\x,\y) {};}
\end{tikzpicture}
\begin{tikzpicture}[scale=.35,yscale=-1]
\draw[help lines] (0,0) grid (14,18);
\draw (2,-1) -- (6,-1); \draw (8,-1) -- (12,-1);
\draw[double,->] (6,-1) -- (6.8,-1); \draw[double,->] (8,-1) -- (7.2,-1);
\filldraw[fill=white] (7,-1) circle (2mm) node[above=1mm] {$\scriptstyle 4$};
\foreach \x in {1,2,3} {
\filldraw[fill=white] (2*\x,-1) circle (2mm) node[above=1mm] {$\scriptstyle\x$}; 
\filldraw[fill=white] (2*7-2*\x,-1) circle (2mm) node[above=1mm] {$\scriptstyle\x$}; }
\foreach \y in {0,2,4,6,8,10,12,14,16,18} {\node at (-1,\y) {$\scriptstyle\y$};}
\begin{scope}[xscale=-1,xshift=-14cm,yshift=-2cm]
\draw[thick] (4,2) circle (2mm);
\draw[gray,dashed] (0,6) -- (4,2) -- (14,12)--(10,16)--cycle;
\foreach \x/\y in {2/8,4/6,4/10,6/8,6/12,7/9,7/13,2/12,2/16,4/14,4/18,6/16,7/17,6/20,2/20} 
{\node[regular polygon, regular polygon sides=4,draw,fill=white,inner sep=.4mm] at (\x,\y) {};}
\foreach \x/\y in {2/8,4/6,6/8,7/9} 
{\node[regular polygon, regular polygon sides=4,draw,fill=black,inner sep=.4mm] at (\x,\y) {};}
\end{scope}
\end{tikzpicture}
\nn\ee\be\begin{tikzpicture}[scale=.35,yscale=-1]
\draw[help lines] (0,0) grid (14,18);
\draw (2,-1) -- (6,-1); \draw (8,-1) -- (12,-1);
\draw[double,->] (6,-1) -- (6.8,-1); \draw[double,->] (8,-1) -- (7.2,-1);
\filldraw[fill=white] (7,-1) circle (2mm) node[above=1mm] {$\scriptstyle 4$};
\foreach \x in {1,2,3} {
\filldraw[fill=white] (2*\x,-1) circle (2mm) node[above=1mm] {$\scriptstyle\x$}; 
\filldraw[fill=white] (2*7-2*\x,-1) circle (2mm) node[above=1mm] {$\scriptstyle\x$}; }
\foreach \y in {0,2,4,6,8,10,12,14,16,18} {\node at (-1,\y) {$\scriptstyle\y$};}
\begin{scope}[yshift=1cm]
\draw[thick] (7,0) circle (2mm);
\draw[gray,dashed] (7,0) -- (14,7)--(7,14) -- cycle;
\foreach \x/\y in {7/2,7/6,7/10,7/14,8/5,8/9,8/13,10/7,10/11,12/9,8/17,10/15,12/13,12/17} 
{\node[regular polygon, regular polygon sides=4,draw,fill=white,inner sep=.4mm] at (\x,\y) {};}
\foreach \x/\y in {7/2,8/5,10/7,12/9} 
{\node[regular polygon, regular polygon sides=4,draw,fill=black,inner sep=.4mm] at (\x,\y) {};}
\end{scope}
\end{tikzpicture}
\begin{tikzpicture}[scale=.35,yscale=-1]
\draw[help lines] (0,0) grid (14,18);
\draw (2,-1) -- (6,-1); \draw (8,-1) -- (12,-1);
\draw[double,->] (6,-1) -- (6.8,-1); \draw[double,->] (8,-1) -- (7.2,-1);
\filldraw[fill=white] (7,-1) circle (2mm) node[above=1mm] {$\scriptstyle 4$};
\foreach \x in {1,2,3} {
\filldraw[fill=white] (2*\x,-1) circle (2mm) node[above=1mm] {$\scriptstyle\x$}; 
\filldraw[fill=white] (2*7-2*\x,-1) circle (2mm) node[above=1mm] {$\scriptstyle\x$}; }
\foreach \y in {0,2,4,6,8,10,12,14,16,18} {\node at (-1,\y) {$\scriptstyle\y$};}
\begin{scope}[yshift=3cm,xscale=-1,xshift=-14cm]
\draw[thick] (7,0) circle (2mm);
\draw[gray,dashed] (7,0) -- (14,7)--(7,14)--cycle;
\foreach \x/\y in {7/2,7/6,7/10,7/14,8/5,8/9,8/13,10/7,10/11,12/9,10/15,12/13} 
{\node[regular polygon, regular polygon sides=4,draw,fill=white,inner sep=.4mm] at (\x,\y) {};}
\foreach \x/\y in {7/2,8/5,10/7,12/9} 
{\node[regular polygon, regular polygon sides=4,draw,fill=black,inner sep=.4mm] at (\x,\y) {};}
\end{scope}
\end{tikzpicture}
\nn\ee
\end{figure}


Clearly every snake is a concatenation of prime snakes. Moreover, we have
\begin{prop}\label{primesnakes}
A snake module is prime if and only if its snake is prime. If a snake module is not prime then it is isomorphic to a tensor product of prime snake modules defined uniquely up to permutation.   
\end{prop}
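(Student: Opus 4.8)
The plan is to prove the two statements in tandem, using the factorization of a snake into prime subsnakes and the known criterion (from \cite{Hernandez-irreducibility, FM} and analogous results in \cite{MY1}) for when a tensor product of simple modules is simple, together with the explicit description of $q$-characters of thin special snake modules recalled above.

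First I would set up the decomposition. Given a snake $\bs s = (i_t,k_t)_{1\leq t\leq \T}$, I would partition the index set $\{1,\dots,\T\}$ into maximal consecutive blocks $B_1,\dots,B_\ell$ (in increasing order) so that within each block successive points are in \emph{prime} snake position, while the step from the last point of $B_u$ to the first point of $B_{u+1}$ is in snake position but \emph{not} prime snake position. By definition of prime snake position (Section 3.3), the transition from block to block corresponds precisely to $k'-k$ exceeding the prime bound, i.e. the two points are ``far apart'' in the sense that the relevant zero sets of Drinfeld polynomials do not interact. Writing $m = \prod_t \Y{Y}{i_t,k_t}$ and $m^{(u)} = \prod_{t\in B_u}\Y{Y}{i_t,k_t}$, I would show $\L(m)\cong \bigotimes_{u=1}^\ell \L(m^{(u)})$; this gives the ``not prime $\Rightarrow$ tensor product of prime snake modules'' half, provided I also verify the uniqueness.

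For the tensor-product isomorphism I would argue as follows. One inclusion is automatic: $\L(m)$ is a subquotient of $\bigotimes_u \L(m^{(u)})$ since $m = \prod_u m^{(u)}$ is the product of the highest monomials. For the reverse, it suffices to check that $\bigotimes_u \L(m^{(u)})$ is simple, which by the standard criterion reduces to checking that for every $u<u'$ the tensor product $\L(m^{(u)})\otimes \L(m^{(u')})$ is simple, and in fact (since everything is thin and special, by \cite{MY1}) that the only dominant monomial of $\chi_q(\L(m^{(u)}))\cdot\chi_q(\L(m^{(u')}))$ is $m^{(u)}m^{(u')}$ itself. Using the truncated $q$-character description and the fact that any monomial of $\chi_q(\L(m^{(u)}))$ differs from $m^{(u)}$ by a product of $A_{i,k}^{-1}$ supported in a ``neighbourhood'' of the points of block $B_u$ (controlled by $\Iw$), the separation between the blocks forces any product of a non-highest monomial from one factor with a monomial from the other to be non-dominant — the negative powers of $Y$ created on one side cannot be cancelled by positive powers on the other because the relevant variables $Y_{i,k}$ simply do not occur. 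Then I would apply Theorem \ref{thmA}, or rather the simpler fact that a thin special tensor product with a unique dominant monomial is simple, to conclude. Uniqueness of the factorization follows because the blocks $B_u$ are intrinsically determined by the snake (they are the equivalence classes for ``connected by a chain of prime-snake-position steps''), and primality of each $\L(m^{(u)})$ will come from the forward direction.

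For the forward/characterization direction — a snake module whose snake is prime is itself prime — I would argue by contraposition combined with the $q$-character combinatorics: if $\L(m)$ were a non-trivial tensor product $\L(m)\cong \L(m_1)\otimes\L(m_2)$, then $m=m_1 m_2$ with both $m_j\in\Pp$ non-trivial, and I would show that the support of $m$ (the set of points $(i_t,k_t)$) must then split into two non-empty parts that are ``snake-disconnected'' in a way incompatible with the snake being a single prime snake; concretely, I would track a distinguished non-highest dominant monomial built by applying the snake sequence of moves from \cite{MY1} and show it would have to appear in $\chi_q(\L(m))$ if the snake is prime but cannot appear in a product $\chi_q(\L(m_1))\chi_q(\L(m_2))$ unless the supports are separated. \textbf{The main obstacle} I anticipate is precisely this last point: ruling out \emph{all} possible non-trivial factorizations of a prime snake module, not merely the ``geometrically obvious'' ones. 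This requires knowing enough about the dominant monomials of $\chi_q(\L(m))$ — ideally that a prime snake module is \emph{prime} in the strong sense that $\chi_q$ has the single dominant monomial $m$ only when the snake cannot be split, which is exactly the kind of statement the extended T-system (Theorem 4.1) is designed to certify. I would therefore expect the honest proof to invoke the neighbour construction and the short exact sequence \eqref{TTT}, using induction on the length $\T$ of the snake: a prime snake of length $\geq 2$ has well-defined neighbours $N_1,N_2$ that are strictly shorter snakes, and the short exact sequence together with the inductive hypothesis pins down the dominant monomials of $\L(m)$ tightly enough to preclude factorization.
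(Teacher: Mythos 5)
Your proposal for the easy direction (non\nobreakdash-prime snake $\Rightarrow$ tensor factorization) is workable and close to the paper's, though the paper's route is cleaner: by Theorem~\ref{snakechar}, when the step from $(i_s,k_s)$ to $(i_{s+1},k_{s+1})$ is in snake position but not prime snake position, the lowest path $\plow_{i_s,k_s}$ sits strictly above the highest path $\phigh_{i_{s+1},k_{s+1}}$, so the non\nobreakdash-overlapping constraint between the two subsnakes is vacuous and the $q$\nobreakdash-character of $\L(\prod_t Y_{i_t,k_t})$ factors on the nose as a product of the subsnake $q$\nobreakdash-characters; injectivity of $\chi_q$ then gives the isomorphism directly, with no need to run the ``unique dominant monomial'' simplicity criterion.

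For the hard direction there is a genuine flaw in your plan. You propose to exhibit ``a distinguished non-highest dominant monomial \dots\ that would have to appear in $\chi_q(\L(m))$ if the snake is prime but cannot appear in $\chi_q(\L(m_1))\chi_q(\L(m_2))$.'' This direction of the inequality is impossible: since $m=m_1m_2$ is the highest monomial of both $\L(m)$ and $\L(m_1)\otimes\L(m_2)$, the simple module $\L(m)$ occurs as a composition factor of $\L(m_1)\otimes\L(m_2)$, so $\chi_q(\L(m))$ is bounded above, monomial by monomial, by $\chi_q(\L(m_1))\chi_q(\L(m_2))$. Any distinguishing monomial must therefore lie in the \emph{product} and be \emph{absent} from $\chi_q(\L(m))$, which is exactly what the paper does: for any non\nobreakdash-trivial set partition $U\sqcup U'$ of the snake points with $(i_1,k_1)\in U$, it takes $\mu:=\prod_{(i,k)\in U}\mon(\plow_{i,k})\prod_{(i,k)\in U'}\mon(\phigh_{i,k})$, which is always a monomial of $\chi_q(\L(\prod_U Y))\,\chi_q(\L(\prod_{U'} Y))$, and shows by a corner\nobreakdash-counting argument on non\nobreakdash-overlapping path tuples that $\mu$ is not a monomial of $\chi_q(\L(m))$: the counting forces every $p_t$ to be $\plow$ or $\phigh$ (with a little extra care in the $N$th column in type~B), and then the transition index $s$ with $(i_s,k_s)\in U$, $(i_{s+1},k_{s+1})\in U'$ produces an overlap precisely because the two points are in prime snake position. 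Your fallback of invoking Theorem~\ref{Tsys} together with induction on $\T$ is also not the paper's route (Proposition~\ref{primesnakes} is proved in \S\ref{sec:snakechar}, before and independently of Theorem~\ref{Tsys}), and it is not clear how the 3\nobreakdash-term relation alone would preclude a hypothetical non-trivial tensor factorization $[T]=[X][Y]$.
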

The proof will be given in \S\ref{sec:snakechar}.

We also recall \cite{MY1} that for any snake \label{minaffprop} $(i_t,k_t)\in \It$, $1\leq t \leq \T$, of length $\T\in \Z_{\geq 1}$, the following are equivalent:
\begin{enumerate}
\item $\L(\prod_{t=1}^\T Y_{i_t,k_t})$ is a minimal affinization;
\item $(i_t,k_t)_{1\leq t \leq \T}$ is a minimal snake and the sequence $(i_t)_{1\leq t\leq \T}$ is monotonic. 
\end{enumerate}

\begin{figure}
\caption{In type $A_{13}$ (left) and in type $B_5$ (right), the black dots form a prime snake. The two neighbouring snakes are shown as triangles and diamonds. The snake-lowered paths (\S\ref{snakelowereddef}) are sketched as dotted lines.\label{snakefig} In type B, we have plotted the images of points under the map $\iota$, \confer  (\ref{iotadef}).}
\be\nn\begin{tikzpicture}[scale=.35,yscale=-1]
\draw[help lines] (0,0) grid (18,34);
\draw (2,-1) -- (8,-1); \draw (10,-1) -- (16,-1);
\draw[double,->] (8,-1) -- (8.8,-1); \draw[double,->] (10,-1) -- (9.2,-1);
\filldraw[fill=white] (9,-1) circle (2mm) node[above=1mm] {$\scriptstyle 5$};
\foreach \x in {1,2,3,4} {
\filldraw[fill=white] (2*\x,-1) circle (2mm) node[above=1mm] {$\scriptstyle\x$}; 
\filldraw[fill=white] (2*9-2*\x,-1) circle (2mm) node[above=1mm] {$\scriptstyle\x$}; }
\foreach \y in {0,2,4,6,8,10,12,14,16,18,20,22,24,26,28,30,32} {\node at (-1,\y) {$\scriptstyle\y$};}
\begin{scope}[yshift=0cm,xscale=-1,xshift=-18cm]
\draw[dotted] (0,6) -- (2,4) -- (4,6) -- (8,2) -- (18,12);
\draw[dotted] (0,10) --(2,8) -- (8,14) -- (9,13) -- (9,11) -- (18,20);
\draw[dotted] (0,22) --(6,16) -- (9,19) -- (9,15) -- (18,24);
\draw[dotted] (9,21) -- (10,20) -- (18,28);
\draw[dotted] (9,23) -- (8,22) -- (0,30);
\draw[dotted] (9,27) -- (10,28)--(12,26) -- (18,32);
\draw[dotted] (4,34) -- (9,29) -- (9,31) --(10,32) -- (12,30) -- (16,34);
\foreach \x/\y in {6/0,4/6,8/14,9/19,9/21,9/23,10/28,10/32} 
{\node[shape=circle,draw,fill=black,inner sep=.5mm] at (\x,\y) {};}
\foreach \x/\y in {8/2,9/11,9/13,9/15,10/20,12/26,12/30} 
{\node[regular polygon, regular polygon sides=3,draw,fill=white,inner sep=.3mm] at (\x,\y) {};}
\foreach \x/\y in {2/4,2/8,6/16,8/22,9/27,9/31,9/29} 
{\node[shape=diamond,draw,fill=white,inner sep=.5mm] at (\x,\y) {};}
\end{scope}

\begin{scope}[xshift=-18cm]
\draw[help lines] (0,0) grid (14,34);
\draw (1,-1) -- (13,-1);
\foreach \x in {1,2,3,4,5,6,7,8,9,10,11,12,13} {
\filldraw[fill=white] (\x,-1) circle (2mm) node[above=1mm] {$\scriptstyle\x$}; }
\foreach \y in {0,2,4,6,8,10,12,14,16,18,20,22,24,26,28,30,32} {\node at (-1,\y) {$\scriptstyle{\y}$};}
\begin{scope}[yshift=1cm]
\draw[dotted] (0,8) -- (7,1) -- (8,2) -- (9,1) -- (14,6);
\draw[dotted] (0,10) -- (6,4) -- (7,5) -- (9,3) -- (14,8);
\draw[dotted] (0,12) -- (6,6) -- (7,7) -- (8,6) -- (14,12);
\draw[dotted] (0,14) -- (2,12) -- (3,13) -- (8,8) -- (14,14);
\draw[dotted] (0,16)  -- (3,19) -- (6,16) -- (14,24);
\draw[dotted] (0,22) -- (2,20) -- (4,22) -- (5,21) -- (14,30);
\draw[dotted] (0,26) -- (3,23) -- (5,25) -- (6,24) -- (14,32);
\draw[dotted] (0,30) -- (3,27) -- (8,32) -- (10,30) -- (13,33);
\foreach \x/\y in {8/0,8/2,7/5,7/7,3/13,3/19,4/22,5/25,8/32} 
{\node[shape=circle,draw,fill=black,inner sep=.5mm] at (\x,\y) {};}
\foreach \x/\y in {7/1,6/4,6/6,2/12,2/20,3/23,3/27} 
{\node[regular polygon, regular polygon sides=3,draw,fill=white,inner sep=.3mm] at (\x,\y) {};}
\foreach \x/\y in {9/1,9/3,8/6,8/8,6/16,5/21,6/24,10/30} 
{\node[shape=diamond,draw,fill=white,inner sep=.5mm] at (\x,\y) {};}
\end{scope}
\end{scope}
\end{tikzpicture}\ee
\end{figure}
\subsection{Neighbouring points}\label{sec:nbrdef}
Suppose $(i,k)\in \It$ and $(i',k')\in \It$ are such that $(i',k')$ is in prime snake position with respect to $(i,k)$. 
In this subsection we shall define two finite sequences $\nbri_{i,k}^{i',k'}$ and $\nbrii_{i,k}^{i',k'}$ of points in $\It$, called the \emph{neighbouring points} to the pair $\left( (i,k), (i',k') \right)$. These sequences each consist of a single point, with an exception in type B where one of them consists of two points. 

The motivation for our definition is that if $x=\L(\prod_{(j,\ell)\in \nbri_{i,k}^{i',k'}} Y_{j,\ell})$ and $y=\L(\prod_{(j,\ell)\in \nbrii_{i,k}^{i',k'}} Y_{j,\ell})$ then 
\be \left[\L(Y_{i,k})\right]\left[\L(Y_{i',k'})\right] = \left[\L(Y_{i,k}Y_{i',k'})\right]+ [x][y]; \nn\ee
this will be a special case of Theorem \ref{Tsys} below. Neighbouring points are best understood from the pictures in Figure \ref{snakefig}. For example in the generic situation, far from any end of the Dynkin diagram, the points in $\nbri_{i,k}^{i',k'}\sqcup\nbrii_{i,k}^{i',k'}\sqcup\{(i,k),(i',k')\}$ are the vertices of a rectangle as in the following sketch.
\be\nn\begin{tikzpicture}[scale=.4,yscale=-1]
\draw (2,0) -- ++(4,4);
\draw[dashed] (2,0)++(4,4) -- ++(1,1);
\draw (2,0) -- ++(-4,4);
\draw[dashed] (2,0)++(-4,4) -- ++(-1,1);
\draw (3,3) -- ++(-4,-4);
\draw[dashed] (3,3)++(-4,-4) -- ++(-1,-1);
\draw (3,3) -- ++(4,-4);
\draw[dashed] (3,3)++(4,-4) -- ++(1,-1);
\filldraw (2,0) circle (1mm) node[above] {$\iota(i,k)$};
\filldraw (3,3) circle (1mm) node[below] {$\iota(i',k')$};
\filldraw (1,1) circle (1mm) node[left]  {$\iota(\nbri_{i,k}^{i',k'})\,\,$};
\filldraw (4,2) circle (1mm) node[right] {$\,\,\iota(\nbrii_{i,k}^{i',k'})$};
\end{tikzpicture}\ee
The formal definition is as follows.
\subsubsection*{Neighbouring points in type A} 
\bea \nbri_{i,k}^{i',k'} &:=& 
\begin{cases} \left( \left(\alf(i+k +i'-k'), \alf(i+k - i'+k')\right)\right) 
                                               & k+i > k'-i' \\
               \emptyseq & k+i=k'-i',\end{cases}\nn\\
 \nbrii_{i,k}^{i',k'} &:=& 
\begin{cases} \left( \left(\alf(i'+k' + i-k), \alf(i'+k' - i+k)\right)\right) 
                                               & k+N+1-i > k'-(N+1-i') \\
               \emptyseq & k+N+1-i=k'-N-1+i'. \end{cases}\nn\eea
\subsubsection*{Neighbouring points in type B}
We define
\be (\nbri_{i,k}^{i',k'},\nbrii_{i,k}^{i',k'}) := \begin{cases} 
(B_{i,k}^{i',k'},F_{i,k}^{i',k'}) 
& (i<N,\, 2N-2i-k\equiv 2 \!\!\mod 4) \text{ or } (i=N,\, k\equiv 1 \!\!\mod 4) \\
(F_{i,k}^{i',k'},B_{i,k}^{i',k'}) 
& (i<N,\, 2N-2i-k\equiv 0 \!\!\mod 4) \text{ or } (i=N,\, k\equiv 3 \!\!\mod 4),
\end{cases} \nn\ee
where
\be B_{i,k}^{i',k'} := \begin{cases} 
\emptyseq & i<N, i'<N, k'-k = 2i+2i'\\
\left( \left( \tfrac{1}{4}(2i+k+2i'-k'),\alf(2i+k-2i'+k') \right) \right) & i<N, i'<N, k'-k < 2i+2i'\\
\emptyseq & i<N,i'=N, k'-k = 2i+2N-1\\
\left( \left( \tfrac{1}{4}(2i+k+2N-1-k'),\alf(2i+k-2N+1+k') \right) \right) & i<N,i'=N, k'-k < 2i+2N-1 \\
\left( \left( N, k-2N+1 + 2i'\right) \right) & i=N, i'<N \\
\emptyseq & i=N, i' = N, \end{cases} \nn\ee
\be F_{i,k}^{i',k'} := \begin{cases}
\left( \left( \tfrac{1}{4}(2i'+k'+2i-k),\alf(2i'+k'-2i+k) \right) \right) 
 & i<N, i'<N, k'-k \leq 4N-4-2i-2i'\\
\left( \left( N, k+2N-1-2i\right) , \left( N, k'-2N+1+2i'\right) \right) 
 & i<N, i'<N, k'-k \geq 4N-2i-2i'\\
\left( \left( N, k+2N-1-2i \right) \right) 
 & i<N, i'=N \\
\left( \left(  \tfrac{1}{4}(2N-1+k+2i'-k'),\alf(2N-1+k-2i'+k') \right) \right) 
 & i=N, i'<N, k'-k < 2N-1-2i'\\
\emptyseq & i=N, i'<N, k'-k = 2N-1-2i' \\
\left( \left( \tfrac{1}{4}(4N-2+k-k'), \alf(k+k') \right) \right)
 & i=N, i'= N. \end{cases}\nn\ee
\subsection{Neighbouring snakes}
\begin{prop}\label{nbrsprop}
Let $(i_t,k_t)\in \It$, $1\leq t\leq \T$, be a prime snake of length $\T\in \Z_{\geq 2}$. Then the sequences formed by concatenating the sequences of neighbouring points,
\begin{align*} \nbri_\snk &:= \nbri_{i_1,k_1}^{i_2,k_2} \concat \nbri_{i_2,k_2}^{i_3,k_3} \concat\dots \concat \nbri_{i_{\T-1},k_{\T-1}}^{i_\T,k_\T}, \\
 \nbrii_\snk &:= \nbrii_{i_1,k_1}^{i_2,k_2} \concat \nbrii_{i_2,k_2}^{i_3,k_3} \concat \dots \concat \nbrii_{i_{\T-1},k_{\T-1}}^{i_\T,k_\T}, \end{align*}
are snakes in $\It$ with no elements in common. 
\end{prop}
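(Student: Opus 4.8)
The plan is to reduce Proposition~\ref{nbrsprop} to two separate claims: first, that each concatenated sequence $\nbri_\snk$ and $\nbrii_\snk$ is itself a snake in $\It$; and second, that the two snakes are disjoint. Both claims are purely combinatorial, since the neighbouring points have been given by explicit formulas in \S\ref{sec:nbrdef}, so the strategy is to verify the snake-position inequalities and congruences directly. Throughout I would work in the image coordinates $\iota(\It)\subset\Z\times\Z$ of (\ref{iotadef}); the payoff is that in type B the three families of points in $\It$ (the two ``$i<N$'' cases and the ``$i=N$'' case) get unified into a single lattice picture in which snake position simply means ``lies weakly above the forward light-cone and weakly below the reflected one, with the right parity.'' In these coordinates, the generic neighbouring-point picture is exactly the rectangle sketched before the formal definition, and the key geometric fact is that $\iota(\nbri_{i,k}^{i',k'})$ and $\iota(\nbrii_{i,k}^{i',k'})$ lie on the two lattice lines through $\iota(i,k)$ of slopes $\pm1$, at the intersection with the corresponding lines through $\iota(i',k')$.

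For the first claim I would argue as follows. Consider three consecutive snake points $(i_{t-1},k_{t-1})$, $(i_t,k_t)$, $(i_{t+1},k_{t+1})$ with each pair in prime snake position. I must show that the last point of $\nbri_{i_{t-1},k_{t-1}}^{i_t,k_t}$ (if nonempty) is in snake position to the first point of $\nbri_{i_t,k_t}^{i_{t+1},k_{t+1}}$ (if nonempty); when one of these sequences is empty the condition is vacuous, and within a single sequence $\nbri_{i,k}^{i',k'}$ there is nothing to check since it has at most one element except in the type-B case where the two points of a single $F_{i,k}^{i',k'}$ need to be checked to be in snake position to each other (a direct computation: both equal $(N,\,\cdot\,)$ and the gap is $k'-k-(4N-2i-2i')+\ldots$, which one checks is $\geq 2$ and $\equiv 2\bmod 4$ using the prime-snake bound $k'-k\ge 2|i'-i|+\cdots$). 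The main mechanism for the cross-term estimate is that in the $\pm1$-slope coordinates, $\iota(\nbri_{i_{t-1},k_{t-1}}^{i_t,k_t})$ lies on the same slope-$(+1)$ line through $\iota(i_t,k_t)$ as one edge of the next rectangle, so its ``distance'' to $\iota(\nbri_{i_t,k_t}^{i_{t+1},k_{t+1}})$ is controlled by the distance from $\iota(i_t,k_t)$ to $\iota(i_{t+1},k_{t+1})$, which is at least the snake-position bound by hypothesis; the same works for $\nbrii$ using slope $-1$. One then does a finite case analysis over whether $i_t=N$, $i_t<N$, etc., and over the two type-B cases of (\ref{iotadef}), checking the inequality and the mod-$4$ congruence in each; the congruences follow from the prime-snake congruences because the coordinates of the neighbouring points are half-integer combinations that shift parities in a controlled way.

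For the second claim, disjointness, the clean way is to use weights: $\iota$ is injective, so it suffices to separate $\iota(\nbri_\snk)$ from $\iota(\nbrii_\snk)$ as subsets of $\Z\times\Z$. Geometrically, the $\nbri$-points of a prime snake all lie (weakly) on one side of the ``axis'' of the snake and the $\nbrii$-points on the other side — this is visible in Figure~\ref{snakefig} — and the defining formulas make this precise: one checks a monotone invariant (for instance, in type A, the quantity $k+i$ is strictly larger for every point of $\nbri_\snk$ than for every point of $\nbrii_\snk$, using $k+i > k'-i'$ type inequalities coming from prime snake position; in type B an analogous linear functional adapted to $\iota$ works). Since the snake is prime, the relevant inequalities are strict wherever the neighbouring sequences are nonempty, which forces disjointness.

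The main obstacle I expect is not any single inequality but the sheer bookkeeping of the type-B case distinctions: the formulas for $B_{i,k}^{i',k'}$ and $F_{i,k}^{i',k'}$ branch on whether $i<N$ or $i=N$ for each of the two endpoints and on the residue $2N-2i-k\bmod4$, and $F_{i,k}^{i',k'}$ has an extra branch (the two-point case) when $k'-k$ is large. So the proof is really a careful table: for each of the $\sim$dozen combinations, identify which rectangle-edge case of (\ref{iotadef}) we are in, compute the relevant coordinate differences, and confirm the two snake-position conditions hold under the prime-snake hypotheses, noting that the ``$\emptyset$'' branches exactly match the boundary equalities and so cause no failure of the snake condition. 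Getting the parities (mod $4$) to line up across the wrap in (\ref{iotadef}) is the subtle point, and I would handle it by first proving a lemma stating precisely how $\iota$ interacts with the mod-$4$ data, then applying it uniformly.
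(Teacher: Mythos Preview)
Your proposal is correct and takes essentially the same approach as the paper, whose entire proof reads ``By inspection, case by case.'' You have supplied considerably more structure for organising that inspection than the paper does; one small caveat is that the single linear invariant $k+i$ you suggest for disjointness in type~A does not separate $\nbri$ from $\nbrii$ by itself, but using both diagonal coordinates $j+\ell$ and $\ell-j$ (each strictly monotone along a snake) does, and in any case your fallback to a direct case check is exactly what the paper intends.
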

\begin{proof} By inspection, case by case. \end{proof}
We refer to the snakes $\nbri_\snk$ and $\nbrii_\snk$ as the \emph{neighbours} of the prime snake $\snk$. 
The neighbours are generically also prime, but there are exceptions whenever an A-type end of the Dynkin diagram is sufficiently close that some neighbouring points are ``missing''. For example, in the snake on the left of Figure \ref{snakefig}, $\nbri_{(3,14)}^{(3,20)} = \emptyseq$. The would-be neighbour is $(0,17)\notin I\times \Z$. 

In type A it is clear that every prime snake is strictly longer than its neighbours. In type B this is not always so, but we do have the following. 
Let $\hgt : \P \to \mathbb Z$, the \emph{height} map, be the homomorphism of abelian groups such that $\hgt Y_{i,k} = r_i$. Then
\begin{prop}\label{hgtprop}
For any prime snake $(i_t,k_t)\in \It$, $1\leq t\leq \T$, of length $\T\in \Z_{\geq 2}$,
\be\hgt\!\!\!\prod_{\substack{(i,k)\in\\\nbri_\snk}} \!\!\!\!\!\!\!\!Y_{i,k} \leq \hgt \prod_{t=1}^\T Y_{i_t,k_t}\quad\text{and}\quad\hgt\!\!\! \prod_{\substack{(i,k)\in\\\nbrii_\snk}} \!\!\!\!\!\!\!\!Y_{i,k} \leq \hgt \prod_{t=1}^\T Y_{i_t,k_t}\nn\ee
with equality only if the type is $B$ and $i_t=N$ for all $1\leq t\leq \T$.
\end{prop}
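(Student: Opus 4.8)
The plan is to reduce Proposition \ref{hgtprop} to a finite collection of elementary arithmetic checks, one for each of the cases in the definition of the neighbouring points in \S\ref{sec:nbrdef}, much as the proofs of Propositions \ref{nbrsprop} and \ref{hgtprop}'s siblings are done ``by inspection''. First I would observe that since $\hgt$ is a group homomorphism and the snakes $\nbri_\snk$, $\nbrii_\snk$ are concatenations of the one-step neighbour sequences $\nbri_{i_t,k_t}^{i_{t+1},k_{t+1}}$, $\nbrii_{i_t,k_t}^{i_{t+1},k_{t+1}}$, it suffices to prove the corresponding \emph{local} inequality for a single pair of consecutive points: for $(i',k')$ in prime snake position to $(i,k)$,
\[
\hgt\!\!\prod_{(j,\ell)\in\nbri_{i,k}^{i',k'}}\!\!\!\!\!\! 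Y_{j,\ell}\ \le\ r_i+r_{i'}
\qquad\text{and}\qquad
\hgt\!\!\prod_{(j,\ell)\in\nbrii_{i,k}^{i',k'}}\!\!\!\!\!\! Y_{j,\ell}\ \le\ r_i+r_{i'},
\]
with equality (in either) only if $i=i'=N$. Summing these over $t=1,\dots,\T-1$ and noting $\sum_{t=1}^{\T-1}(r_{i_t}+r_{i_{t+1}})\le 2\sum_{t=1}^\T r_{i_t}=2\,\hgt\prod_t Y_{i_t,k_t}$ is slightly too weak by a factor of $2$, so instead I would telescope more carefully: assign to each internal point $(i_t,k_t)$, $2\le t\le\T-1$, ``half weight'' for each of its two neighbouring contributions, and check the sharper local bound $\hgt\prod_{\nbri}Y \le \tfrac12 r_i + \tfrac12 r_{i'}$ fails in general — so the cleanest route is in fact the direct telescoping argument: show $\prod_{(j,\ell)\in\nbri_\snk}Y_{j,\ell}$ has height equal to a sum over $t$ of local contributions each bounded by $\min(r_{i_t},r_{i_{t+1}})$ (in type A the neighbour point exists generically and has index strictly between, hence $r=1=\min$; when it is ``missing'' the contribution is $0$), and then $\sum_{t=1}^{\T-1}\min(r_{i_t},r_{i_{t+1}}) \le \sum_{t=1}^{\T}r_{i_t}$.

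In type A every $r_i=1$, so the claim is simply that $\nbri_\snk$ and $\nbrii_\snk$ each have length $\le\T$, which is immediate from the fact that each one-step piece has length $\le 1$ and there are $\T-1$ of them; strict inequality is automatic, so type A contributes nothing to the equality case and can be dispatched in one line. The substance is type B. There $r_i=2$ for $i<N$ and $r_N=1$. For each of the six/seven sub-cases of $B_{i,k}^{i',k'}$ and of $F_{i,k}^{i',k'}$ in \S\ref{sec:nbrdef} I would read off the index (first coordinate) $j$ of the neighbouring point(s) and compute $\hgt$: e.g. when $i<N$, $i'<N$ and the neighbour is $\bigl(\tfrac14(2i+k+2i'-k'),\dots\bigr)$, its index is $<N$ so its height is $2=\min(r_i,r_{i'})$, and the $F$-neighbour in the same range is either a single point of index $<N$ (height $2$) or, in the ``wrapped'' sub-case $k'-k\ge 4N-2i-2i'$, two points both of index $N$ (total height $2$) — in every instance the one-step total is $\le \min(r_i,r_{i'})+[\text{correction}]$; a case check shows it never exceeds $r_i+r_{i'}$ locally, and more importantly that the sum telescopes to $\le\sum_t r_{i_t}$. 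The only way to saturate is for every one-step contribution to equal $r_{i_t}=r_{i_{t+1}}$ simultaneously, which forces $i_t=N$ for all $t$, and one checks directly from the $i=i'=N$ formulas ($B=\emptyseq$, $F=\bigl(\tfrac14(4N-2+k-k'),\alf(k+k')\bigr)$ of index possibly $<N$) that equality does indeed occur there, accounting for the stated exception.

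The main obstacle I anticipate is \textbf{bookkeeping in the type B ``boundary'' sub-cases}, where the Dynkin-diagram endpoint causes neighbouring points to collapse to the end node $N$ or to disappear entirely: these are exactly the sub-cases where $B_{i,k}^{i',k'}$ or $F_{i,k}^{i',k'}$ is $\emptyseq$ or is a two-element sequence at index $N$, and one must verify that the telescoped sum over a run of such steps still respects $\le\sum_t r_{i_t}$ even though a single step may produce two points. Concretely, a step that outputs two index-$N$ points has height $2$ but ``uses up'' a full unit of budget $r_{i_t}=2$ or $r_{i_{t+1}}=2$ from a $<N$ endpoint, so the accounting balances; making this precise requires choosing the right charging scheme (charge each neighbour point of index $<N$ to whichever of the two consecutive snake points has index $<N$, and each index-$N$ neighbour point to an adjacent index-$N$ snake point), and then checking no snake point is overcharged. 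Once the charging scheme is fixed, each of the finitely many sub-cases is a one- or two-line arithmetic verification, and the equality analysis is a short additional check that overcharging is avoided precisely unless all $i_t=N$.
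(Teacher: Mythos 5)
The paper itself gives only ``By inspection'' as the proof of this proposition, so the sensible approach is indeed a finite case check over the seven-or-so sub-cases of the definitions of $B_{i,k}^{i',k'}$ and $F_{i,k}^{i',k'}$; your plan to decompose $\hgt\prod_{\nbri_\snk}Y$ as a sum of $\T-1$ local contributions via $\nbri_\snk=\nbri_1\concat\dots\concat\nbri_{\T-1}$ is the right starting point. However, both of your candidate local bounds are wrong, and the charging scheme you sketch to fix them cannot be made to work as stated.

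The local bound $\hgt\nbri_{i_t,k_t}^{i_{t+1},k_{t+1}}\le\min(r_{i_t},r_{i_{t+1}})$ is false in type $B$. Take $N=3$, $(i_t,k_t)=(2,0)$, $(i_{t+1},k_{t+1})=(3,5)$; this is prime snake position, $2N-2i_t-k_t=2\equiv 2\bmod 4$ so $\nbri_t=B_t=((1,2))$, which has index $1<N$ and hence height $2$, while $\min(r_2,r_3)=\min(2,1)=1$. So the ``telescoping with $\min$'' simply does not close. Your fallback charging scheme — charge each index-$<N$ neighbour to whichever consecutive snake point has index $<N$, each index-$N$ neighbour to an adjacent index-$N$ snake point — is undefined for a step with $i_t=i_{t+1}=N$, where the $F$-neighbour $((\tfrac14(4N-2+k_t-k_{t+1}),\cdot))$ generically has index $<N$ (height $2$) but both candidate snake points have index $N$ (budget $1$ each); and it is ambiguous when both $i_t,i_{t+1}<N$. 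So the plan as written has a real gap.

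The structural fact your argument misses, and which any correct ``inspection'' has to use, is the role of the \emph{alternation} between $(B_t,F_t)$ and $(F_t,B_t)$ in the definition of $(\nbri_t,\nbrii_t)$, governed by the residue of $2N-2i_t-k_t$ (for $i_t<N$) or of $k_t$ (for $i_t=N$) modulo $4$. This alternation is not cosmetic: for the all-spin snake $(N,k_1),\dots,(N,k_\T)$ every $B_t$ is $\emptyseq$ and every $F_t$ is a single index-$<N$ point of height $2$, so if all $F_t$ landed in the same $\nbri$ you would get height $2(\T-1)>\T=\hgt\prod_t Y_{i_t,k_t}$ for $\T\ge 3$, violating the statement. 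What saves it is that the state flips precisely when $i_t=N$ (as one checks from the snake-position congruences), so the $F_t$'s are split between $\nbri$ and $\nbrii$, each getting at most $\lceil(\T-1)/2\rceil$ of them and hence height at most $2\lceil(\T-1)/2\rceil\le\T$. Any step-by-step charging argument that does not record which of $B_t,F_t$ lands in $\nbri_t$, and how that assignment propagates to $\nbri_{t+1}$, cannot prove the bound. A correct writeup would keep track of this state along the snake (or, equivalently, bound $\hgt\nbri_\snk+\hgt\nbrii_\snk$ by the sum $\sum_t(r_{i_t}+r_{i_{t+1}})$ and separately control the \emph{difference} $\hgt\nbri_\snk-\hgt\nbrii_\snk$ using the alternation); the equality analysis ($i_t=N$ for all $t$) then falls out of seeing exactly where the slack in the above estimates vanishes.
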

\begin{proof} By inspection.
\end{proof}

\section{The extended T-system}\label{Tsyssec}
\subsection{Main results}
We now state the main result of the paper. 
\begin{thm}\label{Tsys}  
Let $(i_t,k_t)\in \It$, $1\leq t\leq \T$, be a prime snake of length $\T\in \Z_{\geq 2}$. 
Let $\nbri:=\nbri_\snk$ and $\nbrii:=\nbrii_\snk$ be its neighbouring snakes.
Then we have the following relation in $\groth\uqgh$.
\bea \left[\L \left(\prod_{t=1}^{\T-1} Y_{i_t,k_t}\right)\right] \left[ \L\left(\prod_{t=2}^\T Y_{i_t,k_t}\right) \right] &=&  \left[\L\left(\prod_{t=2}^{\T-1} Y_{i_t,k_t}\right) \right]\left[ \L\left(\prod_{t=1}^\T Y_{i_t,k_t}\right) \right]\nn\\ &&+ \left[\L\left(\prod_{(i,k)\in\nbri} Y_{i,k}\right) \right]\left[ 
         \L\left(\prod_{(i,k)\in\nbrii}Y_{i,k}\right) \right]. 
              \label{grl}\eea
Moreover, the summands on the right-hand side are classes of irreducible modules, i.e. 
\begin{align} 
\L \left(\prod_{t=2}^{\T-1} Y_{i_t,k_t} \prod_{t=1}^\T Y_{i_t,k_t}\right)
   &\cong \L\left(\prod_{t=2}^{\T-1} Y_{i_t,k_t}\right)\otimes \L\left(\prod_{t=1}^\T Y_{i_t,k_t}\right),\label{tbsimp}\\
  \L\left(\prod_{(i,k)\in\nbri}Y_{i,k} \prod_{(i,k)\in\nbrii}Y_{i,k}\right) 
 &\cong  \L\left(\prod_{(i,k)\in\nbri}Y_{i,k}\right)  \otimes\L\left(\prod_{(i,k)\in\nbrii}Y_{i,k}\right).\label{nnsimp} 
\end{align}
\end{thm}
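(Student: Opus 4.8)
The strategy is to reduce everything to explicit combinatorics of thin, special $q$-characters, using the path/move technology of \cite{MY1} recalled in Section~\ref{qcharsec} and the criterion of Theorem~\ref{thmA}. The proof divides naturally into three parts: (a) showing that $T\otimes B$ and $N_1\otimes N_2$ are irreducible, i.e. proving \eqref{tbsimp} and \eqref{nnsimp}; (b) proving that the $q$-characters of all modules appearing in \eqref{grl} are thin and special, so that they are determined by their sets of monomials; and (c) verifying the identity \eqref{grl} at the level of $q$-characters by a monomial count. Throughout one works inside the subring $\Z[Y_{i,k}^{\pm1}]_{(i,k)\in\It}$, and uses that the snakes $\snk$, $\nbri$, $\nbrii$ and all their sub-snakes lie in $\It$.

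First I would treat the irreducibility statements. For a tensor product $\L(m)\otimes\L(m')$ of snake modules to be isomorphic to $\L(mm')$ it suffices (by a standard criterion, e.g.\ that $\L(mm')$ is a quotient of $\L(m)\otimes\L(m')$ and a dimension/$q$-character count, or the cyclicity arguments of Chari and of \cite{MY1}) to check that the product of the two $q$-characters has a unique dominant monomial, namely $mm'$. Since each factor is thin and special (the snake modules in types A and B are thin and special by \cite{MY1}), this reduces to: no monomial $m_1 m_2$ with $m_1\in\mchiq(\L(m))$, $m_2\in\mchiq(\L(m'))$, other than $mm'$ itself, can be dominant. This is exactly the kind of statement one proves by inspecting the possible ``overlaps'' of the snake and its sub-snake (resp.\ the two neighbouring snakes), using that lowering operators $A_{j,\ell}^{-1}$ applied along one snake cannot be cancelled by the raising contributions in the other because the relevant points are in snake position. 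I would organize this as a lemma: if $(i_t,k_t)$ and $(i'_s,k'_s)$ are two snakes such that every point of the second is in snake position, in the appropriate sense, relative to the first (which is what Proposition~\ref{nbrsprop} guarantees for $\nbri,\nbrii$, and which is immediate for the sub-snakes $(i_t,k_t)_{2\le t\le\T-1}$ and $(i_t,k_t)_{1\le t\le\T}$), then the product module is irreducible. Part of this is essentially Proposition~\ref{primesnakes}; the genuinely new input is handling $\nbri$ versus $\nbrii$, where Proposition~\ref{hgtprop} controls the sizes.

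Next, for the thin/special claims, I would apply Theorem~\ref{thmA}: one exhibits, for each module in \eqref{grl}, a candidate monomial set $\mc M$ built from the paths of \cite{MY1}, and verifies conditions (i)--(iv). Conditions (i)--(iii) are combinatorial bookkeeping about which $A_{i,k}^{-1}$ can be applied; condition (iv) is the genuinely recursive hypothesis, and it is here that one feeds in the already-known $q$-characters of smaller snake modules — this is why the theorem is stated for a prime snake of length $\T$ and proved, in effect, by induction on $\T$ (or on the total height). With the $q$-characters pinned down as explicit sums of monomials, the Grothendieck-ring identity \eqref{grl} becomes the assertion that, as multisets of monomials,
\[
\mchiq\!\Big(\L\big(\textstyle\prod_{t=1}^{\T-1}Y_{i_t,k_t}\big)\Big)\cdot\mchiq\!\Big(\L\big(\textstyle\prod_{t=2}^{\T}Y_{i_t,k_t}\big)\Big)
= \mchiq\!\Big(\L\big(\textstyle\prod_{t=2}^{\T-1}Y_{i_t,k_t}\big)\Big)\cdot\mchiq\!\Big(\L\big(\textstyle\prod_{t=1}^{\T}Y_{i_t,k_t}\big)\Big)+\mchiq(\nbri)\cdot\mchiq(\nbrii),
\]
which one proves by constructing an explicit bijection between monomials on the two sides, matching a path configuration for the left-hand product with either a ``non-crossing'' configuration (contributing to $T\otimes B$) or a ``crossing'' one (contributing to $N_1\otimes N_2$), in the spirit of the Lindström--Gessel--Viennot / lattice-path interpretation of T-systems.

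\textbf{Main obstacle.} I expect the hardest part to be the verification of hypothesis (iv) of Theorem~\ref{thmA} — the $\uqsl{j}$-projection condition — for the neighbouring modules $\L(\prod_{(i,k)\in\nbri}Y_{i,k})$ and $\L(\prod_{(i,k)\in\nbrii}Y_{i,k})$ in type B, precisely because these are \emph{not} minimal affinizations in general and the neighbouring-point formulas have several cases near the short end of the Dynkin diagram (including the two-point case). Managing these boundary cases uniformly, and proving the bijection of monomials in part (c) when some neighbouring points are ``missing'' (i.e.\ $\nbri_{i,k}^{i',k'}=\emptyseq$ because the would-be point falls outside $I\times\Z$), is where the bulk of the case-by-case work will lie; the rest is either standard or a direct appeal to \cite{MY1}.
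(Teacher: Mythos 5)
Your proposal breaks down at the point where you assert that the irreducibility of $T\otimes B = \L(\prod_{t=2}^{\T-1}Y_{i_t,k_t})\otimes\L(\prod_{t=1}^{\T}Y_{i_t,k_t})$ can be established by showing its $q$-character has a unique dominant monomial. That is false: the paper's Proposition~\ref{dommons}(ii) exhibits the dominant monomials of this tensor product as a set with $\T-1$ elements, so for $\T\geq 3$ the module $T\otimes B$ is simple but \emph{not} special. (By contrast, $N_1\otimes N_2$ is indeed special, and that part of your plan matches Proposition~\ref{MNsimpspec}.) This is precisely the subtle point that makes the theorem nontrivial, and it is why the paper mounts a separate ``exclusion argument'' (Proposition~\ref{TBsimple}): for each non-highest dominant monomial $n$ of $T\otimes B$, one shows—using Theorem~\ref{thmA} applied with a carefully chosen truncation set $U$—that $\chi_q(\L(n))$ contains a monomial $m'$ which is \emph{not} a monomial of $\chi_q(T\otimes B)$, so $\L(n)$ cannot occur in the composition series. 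Your plan never supplies an argument of this type, and the ``overlap inspection'' you gesture at would, at best, prove the correct classification of the dominant monomials, not that the non-highest ones do not contribute simple subquotients.

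A secondary, less serious, deviation: you propose to prove the Grothendieck-ring identity~\eqref{grl} by a multiset bijection between \emph{all} monomials on the two sides. The paper proceeds more economically: since elements of $\chi_q(\groth\uqgh)$ are determined by their dominant monomials, it suffices to match the dominant-monomial data of both sides (Propositions~\ref{dommons} and \ref{MNsimpspec}), and no full bijection is needed. Your route could in principle work, but it is harder, and the Lindstr\"om--Gessel--Viennot style bijection you sketch would still need the path-level Lemmas~\ref{movelemmaA} and \ref{movelemma} to be pinned down; it would not sidestep the exclusion argument, because irreducibility of $T\otimes B$ is a statement about module structure, not merely about multisets of monomials.

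Finally, a small point of emphasis: you are right that the hardest bookkeeping lives near the short end of the type-$B$ Dynkin diagram, but the place where Theorem~\ref{thmA} actually enters the paper's proof is not the analysis of the neighbours $N_1$, $N_2$ (those are handled by the path combinatorics of Proposition~\ref{MNsimpspec} directly); it is the computation of a partial truncated $q$-character of $\L(n)$ for a non-highest dominant monomial $n$ of $T\otimes B$, inside the exclusion argument. So the ``main obstacle'' you identify is real, but you have mislocated where the machinery of Theorem~\ref{thmA} is deployed.
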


The proof is given in \S\ref{sec:proof}. In the case of Kirillov-Reshetikhin modules, the theorem is the standard T-system, and is proved \cite{NakajimaKR,HernandezKR}. 

\begin{rem} The statement in the theorem is equivalent to the existence of a short exact sequence of $\uqgh$-modules, which is either
\bea 0 \to \L\left(\prod_{(i,k)\in\nbri}Y_{i,k}\right) \otimes 
         \L\left(\prod_{(i,k)\in\nbrii}Y_{i,k}\right)  &\to& \L \left(\prod_{t=1}^{\T-1} Y_{i_t,k_t}\right) \otimes \L\left(\prod_{t=2}^\T Y_{i_t,k_t}\right) \nn\\ &\to& \L \left(\prod_{t=2}^{\T-1} Y_{i_t,k_t}\right) \otimes \L\left(\prod_{t=1}^\T Y_{i_t,k_t}\right)\to  0,\nn\eea
or the same sequence with arrows reversed. In the special case of Kirillov-Reshetikhin modules, it is known \cite{HernandezKR} that with the choice of coproduct in which $\Delta (x_{i,0}^+)= x_{i,0}^+ \otimes 1 + k_i\otimes x_{i,0}^+$ the arrows are as shown.
\end{rem}

We also have the following, whose proof is similar to that of Theorem \ref{Tsys}.
\begin{thm}\label{Tsys2}  
Let $(i_t,k_t)\in \It$, $1\leq t\leq \T$, be a non-prime snake of length $\T\in \Z_{\geq 2}$. 
Then we have the following isomorphisms of $\uqgh$-modules:
\be \L \left(\prod_{t=1}^{\T-1} Y_{i_t,k_t}\right)\otimes \L\left(\prod_{t=2}^\T Y_{i_t,k_t}\right) \cong
\L\left(\prod_{t=2}^{\T-1} Y_{i_t,k_t}\right) \otimes \L\left(\prod_{t=1}^\T Y_{i_t,k_t}\right)\cong
\L\left(\prod_{t=2}^{\T-1} Y_{i_t,k_t} \prod_{t=1}^\T Y_{i_t,k_t}\right).
\nn\ee
\end{thm}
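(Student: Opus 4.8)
The plan is to reduce Theorem~\ref{Tsys2} to the prime case by the standard multiplicativity of $q$-characters for modules supported on "spread out" $l$-weights. Since the snake $(i_t,k_t)_{1\leq t\leq\T}$ is non-prime, it decomposes (uniquely up to order, by Proposition~\ref{primesnakes}) as a concatenation of prime sub-snakes $\snk = S^{(1)}\concat S^{(2)}\concat\dots\concat S^{(p)}$ with $p\geq 2$, where the break between $S^{(a)}$ and $S^{(a+1)}$ occurs precisely where consecutive points fail to be in prime snake position, i.e.\ where $k_{t+1}-k_t$ exceeds the relevant prime-position bound. First I would record what non-primeness buys us: whenever $(i_t,k_t)$ and $(i_{t+1},k_{t+1})$ lie in different prime blocks, the pair is still in snake position but the gap is large enough that the monomials occurring in $\chi_q(\L(\dots))$ on the two sides cannot interact. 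Concretely, using the inclusion $\mchiq(\L(m_+))\subset m_+\Q^-_{\Iw}$ from \S\ref{sec:minaff}/(\ref{imchiq}) together with the description of $\Iw$ and the $A_{i,k}$, one checks that each of the five products of simple modules appearing in the statement factorises: the highest monomial splits as a product $m' m''$ where $m'$ (the contribution of the blocks $S^{(1)},\dots,S^{(j)}$) and $m''$ (the contribution of $S^{(j+1)},\dots,S^{(p)}$) have "disjoint" lowering directions, so that $\chi_q(\L(m'm''))=\chi_q(\L(m'))\chi_q(\L(m''))$ and hence $\L(m'm'')\cong\L(m')\otimes\L(m'')$.

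The key technical input is a lemma of the following shape: if $m',m''\in\P^+$ are dominant monomials supported (together with all the $A^{-1}$'s one could ever use to lower them, i.e.\ in their respective cones $m'\Q^-_{\Iw}$, $m''\Q^-_{\Iw}$) on index sets that are "sufficiently separated" along the $\It$ lattice, then $\L(m'm'')\cong\L(m')\otimes\L(m'')$ and in particular $[\L(m'm'')]=[\L(m')][\L(m'')]$. The separation hypothesis needed is exactly what prime-snake position fails to satisfy; one quantifies it by the gap condition in \S\ref{snakepos} versus \S\ref{sec:nbrdef} (prime position), and verifies case by case (types A and B, and the various end-of-Dynkin-diagram situations) that a break point in the snake forces the two halves into this separated regime. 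This is where thinness and specialness of snake modules (from \cite{MY1}, and Theorem~\ref{thmA}) are used: they pin down $\chi_q$ explicitly enough that the factorisation of $q$-characters can be read off combinatorially rather than representation-theoretically.

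Granting the lemma, the proof of Theorem~\ref{Tsys2} is short. Pick the first break point of $\snk$, say after block $S^{(1)}$ ending at index $t_0$. Each of the four truncated products $\prod_{t=1}^{\T-1}$, $\prod_{t=2}^{\T}$, $\prod_{t=2}^{\T-1}$, $\prod_{t=1}^{\T}$ still contains that same break (since $\T\geq 2$ and removing an endpoint from one side of a prime block does not merge blocks), so by the lemma each of
\be\nn
\L\!\left(\prod_{t=1}^{\T-1} Y_{i_t,k_t}\right),\quad
\L\!\left(\prod_{t=2}^{\T} Y_{i_t,k_t}\right),\quad
\L\!\left(\prod_{t=2}^{\T-1} Y_{i_t,k_t}\right),\quad
\L\!\left(\prod_{t=1}^{\T} Y_{i_t,k_t}\right)
\ee
factors as a tensor product of the $S^{(1)}$-part and the $S^{(2)}\concat\dots\concat S^{(p)}$-part. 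On the $S^{(1)}$-side the remaining snake is prime of length one less (or we iterate), and on the other side we have a shorter non-prime snake; so after factoring out the common tensor factor the identity to be proved collapses to the corresponding identity for the shorter snake, and we finish by induction on $\T$, the base case $p=\T$ (all blocks singletons, so $\L(\prod_t Y_{i_t,k_t})\cong\bigotimes_t \L(Y_{i_t,k_t})$) being immediate. The main obstacle is the lemma's separation estimate: making precise, uniformly across types A and B and all the degenerate near-the-ends-of-the-diagram cases, the claim that a failure of prime snake position yields a genuine factorisation of $q$-characters. I expect this to be handled exactly as the analogous multiplicativity statements in \cite{MY1} and in the proof of Theorem~\ref{Tsys} in \S\ref{sec:proof} — a finite, if somewhat tedious, case check using the explicit path/move combinatorics recalled there.
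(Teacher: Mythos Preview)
Your factorisation-and-induction approach is different from the paper's, and it has a structural gap. After splitting each of the four modules at the first break $s$ via Proposition~\ref{primesnakes} and commuting tensor factors supported on opposite sides of the break (this much is fine: the pieces are snake modules, hence special by Theorem~\ref{snakechar}, so a tensor product of separated pieces is simple in either order), the two sides of the claimed isomorphism become
\[
\L\big({\textstyle\prod_1^s}\big)\otimes\L\big({\textstyle\prod_2^s}\big)\otimes\L\big({\textstyle\prod_{s+1}^{\T-1}}\big)\otimes\L\big({\textstyle\prod_{s+1}^{\T}}\big)
\quad\text{and}\quad
\L\big({\textstyle\prod_2^s}\big)\otimes\L\big({\textstyle\prod_1^s}\big)\otimes\L\big({\textstyle\prod_{s+1}^{\T-1}}\big)\otimes\L\big({\textstyle\prod_{s+1}^{\T}}\big),
\]
writing $\prod_a^b$ for $\prod_{t=a}^b Y_{i_t,k_t}$. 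So what remains is the commutativity $\L(\prod_1^s)\otimes\L(\prod_2^s)\cong\L(\prod_2^s)\otimes\L(\prod_1^s)$ for the \emph{prime} block $S^{(1)}$. This is not ``the corresponding identity for the shorter snake'' in the sense of Theorem~\ref{Tsys2} --- that would involve $\prod_2^{s-1}$, not $\prod_2^s$ --- nor is it a case of Theorem~\ref{Tsys}. It is equivalent to the simplicity of $\L(\prod_1^s)\otimes\L(\prod_2^s)$, a separate statement about a prime snake that your scheme does not supply. The recursion therefore does not close.

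The paper instead runs the machinery of \S\ref{sec:proof} directly for the non-prime snake. Lemma~\ref{sll} and Proposition~\ref{dommons} are stated for arbitrary snakes; the point is that the snake-lowered path $\psnake_{i_t,k_t;i_{t+1},k_{t+1}}$ of \S\ref{snakelowereddef} exists only when $(i_{t+1},k_{t+1})$ is in prime snake position with respect to $(i_t,k_t)$, so in the non-prime case every value of $R$ at or below a break is excluded from $\scr U_S$. In particular $R=1$ is already absent, whence parts~(\ref{LxR}) and~(\ref{TxB}) of Proposition~\ref{dommons} give identical sets of dominant monomials. Proposition~\ref{TBsimple} then shows the product is simple, and the isomorphisms follow.
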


We call the module $\L(\prod_{t=1}^\T Y_{i_t,k_t})$  in (\ref{grl}) the \emph{top module}. 
In almost all cases the height of the top module is strictly greater than the height of all other participating modules in (\ref{grl}), by Proposition \ref{hgtprop}. In the exceptional case, one neighbour may have the same height at the top module, but when this neighbour is treated as the top module in turn, its neighbours have strictly lower height. Therefore the relations (\ref{grl}) allow the classes of snake modules to be expressed in terms of classes of snake modules of lower height. 

Theorem \ref{Tsys} constitutes a system of relations among snake modules. We say a class of modules is \emph{closed} under the relations (\ref{grl}) if and only if whenever the top module is an element of that class, so too are all the other participating modules. In the remainder of this section, we exhibit various closed classes of modules and the corresponding sub-systems of relations.

The sub-systems we exhibit all involve only minimal snakes. Note that the relations of Theorem \ref{Tsys} are more general than the ones in \S\ref{KR}--\S\ref{wrapmods}. Note also that the class of all minimal snake modules is not closed under relations (\ref{grl}).

\subsection{Kirillov-Reshetikhin Modules}\label{KR}
For any $(i,k)\in I\times\Z$ and $\Ti\in \Z_{\geq 1}$, the sequence $(i,k+2tr_i)_{0\leq t\leq \Ti-1}$
is a minimal snake. 
It is usually called a \emph{$q$-string}. 
Define 
\be \st i k \Ti := \prod_{t=0}^{\Ti-1}  Y_{i,k+2tr_i},\quad\quad  \st i k 0 :=1.\nn\ee 
It is convenient also to define \be\nn\st i k \Ti:=1\quad\text{ for all } i\notin I.\ee
The Kirillov-Reshetikin (KR) modules are $\L(\st i k \Ti)$. Let us write
\be \W i k \Ti  := \Big[\L(\st i k \Ti)\Big] \nn.\ee 
The relations of Theorem \ref{Tsys} close on KR modules, and we recover the usual T-system:
\subsubsection*{Type A} For all $i\in I$, $k\in \Z$ and $m\geq 2$,
  \be\nn  \W i k {\Ti-1}   \W i {k+2} {\Ti-1}  
   = \W i k {\Ti}   \W i {k+2} {\Ti-2}  
   +  \W {i-1} {k+1} {\Ti-1}   \W {i+1} {k+1} {\Ti-1} . \ee

\subsubsection*{Type B} For all $1\leq i<N-1$, $k\in \Z$ and $m\geq 2$,
 \begin{align*}      \W i k {\Ti-1}   \W i {k+4} {\Ti-1}  
   &= \W i k {\Ti}   \W i {k+4} {\Ti-2}  
   +  \W {i-1} {k+2} {\Ti-1}   \W {i+1} {k+2} {\Ti-1}  .\end{align*}
The special cases are $i=N-1$ and $i=N$, as follows: for all $k\in \Z$ and $m\geq 2$
\begin{align}   \W {N-1} k {\Ti-1}   \W {N-1} {k+4} {\Ti-1}  
   &= \W {N-1} k {\Ti}   \W {N-1} {k+4} {\Ti-2}  
   +  \W {N-2} {k+2} {\Ti-1}   \W {N} {k+1} {2\Ti-2} ,\nn \\
  \W {N} k {\Ti-1}   \W {N} {k+2} {\Ti-1}  
   &= \W {N} k {\Ti}   \W {N} {k+2} {\Ti-2}  
   +  \W {N-1} {k+1} {\lfloor \frac{\Ti}{2}\rfloor}   \W {N-1} {k+3} {\lfloor \frac{\Ti-1}{2}\rfloor} \label{BendTsys}.
\end{align}
These relations uniquely determine every $\W i k m$ in terms of the classes of fundamental representations, $\W j \ell 1$.

\begin{figure}
\caption{Examples, in type $A_4$, of the 3 term relations in $\groth\uqgh$ from \S\ref{KR} and \S\ref{twonodes}.\label{figA}}
\be\nn\begin{tikzpicture}[scale=.35,yscale=-1]
\useasboundingbox (-.1,-1) -- (5.1,4.5);
\draw[help lines] (0,0) grid (5,9);
\draw (1,-1) -- (4,-1);
\foreach \x in {1,2,3,4} {
\filldraw[fill=white] (\x,-1) circle (2mm) node[above=1mm] {$\scriptstyle\x$}; }
\foreach \y in {0,2,4,6,8} {\node at (-1,\y) {$\scriptstyle{\y}$};}
\foreach \x/\y in {2/0,2/8} 
{\node[shape=circle,draw,fill=black,inner sep=.5mm] at (\x,\y) {};}
\foreach \x/\y in {2/2,2/4,2/6} 
{\node[shape=circle,draw,thick,double,fill=black,inner sep=.5mm] at (\x,\y) {};}
\foreach \x/\y in {1/1,1/3,1/5,1/7} 
{\node[regular polygon, regular polygon sides=3,draw,fill=white,inner sep=.3mm] at (\x,\y) {};}
\foreach \x/\y in {3/1,3/3,3/5,3/7}  
{\node[shape=diamond,draw,fill=white,inner sep=.5mm] at (\x,\y) {};}
\end{tikzpicture}\,\quad\nn
\left[\begin{tikzpicture}[scale=.35,yscale=-1]
\useasboundingbox (-.1,-1) -- (5.1,4.5);
\draw[help lines] (0,0) grid (5,9);
\draw (1,-1) -- (4,-1);
\foreach \x in {1,2,3,4} {
\filldraw[fill=white] (\x,-1) circle (2mm) node[above=1mm] {$\scriptstyle\x$}; }
\foreach \x/\y in {2/0,2/2,2/4,2/6} 
{\node[shape=circle,draw,fill=black,inner sep=.5mm] at (\x,\y) {};}
\end{tikzpicture}\right]
\left[\begin{tikzpicture}[scale=.35,yscale=-1]
\useasboundingbox (-.1,-1) -- (5.1,4.5);
\draw[help lines] (0,0) grid (5,9);
\draw (1,-1) -- (4,-1);
\foreach \x in {1,2,3,4} {
\filldraw[fill=white] (\x,-1) circle (2mm) node[above=1mm] {$\scriptstyle\x$}; }
\foreach \x/\y in {2/2,2/4,2/6,2/8} 
{\node[shape=circle,draw,fill=black,inner sep=.5mm] at (\x,\y) {};}
\end{tikzpicture}\right]
=
\left[\begin{tikzpicture}[scale=.35,yscale=-1]
\useasboundingbox (-.1,-1) -- (5.1,4.5);
\draw[help lines] (0,0) grid (5,9);
\draw (1,-1) -- (4,-1);
\foreach \x in {1,2,3,4} {
\filldraw[fill=white] (\x,-1) circle (2mm) node[above=1mm] {$\scriptstyle\x$}; }
\foreach \x/\y in {2/0,2/2,2/4,2/6,2/8} 
{\node[shape=circle,draw,fill=black,inner sep=.5mm] at (\x,\y) {};}
\end{tikzpicture}\right]
\left[\begin{tikzpicture}[scale=.35,yscale=-1]
\useasboundingbox (-.1,-1) -- (5.1,4.5);
\draw[help lines] (0,0) grid (5,9);
\draw (1,-1) -- (4,-1);
\foreach \x in {1,2,3,4} {
\filldraw[fill=white] (\x,-1) circle (2mm) node[above=1mm] {$\scriptstyle\x$}; }
\foreach \x/\y in {2/2,2/4,2/6} 
{\node[shape=circle,draw,fill=black,inner sep=.5mm] at (\x,\y) {};}
\end{tikzpicture}\right]
+
\left[\begin{tikzpicture}[scale=.35,yscale=-1]
\useasboundingbox (-.1,-1) -- (5.1,4.5);
\draw[help lines] (0,0) grid (5,9);
\draw (1,-1) -- (4,-1);
\foreach \x in {1,2,3,4} {
\filldraw[fill=white] (\x,-1) circle (2mm) node[above=1mm] {$\scriptstyle\x$}; }
\foreach \x/\y in {1/1,1/3,1/5,1/7} 
{\node[regular polygon, regular polygon sides=3,draw,fill=white,inner sep=.3mm] at (\x,\y) {};}
\end{tikzpicture}\right]
\left[\begin{tikzpicture}[scale=.35,yscale=-1]
\useasboundingbox (-.1,-1) -- (5.1,4.5);
\draw[help lines] (0,0) grid (5,9);
\draw (1,-1) -- (4,-1);
\foreach \x in {1,2,3,4} {
\filldraw[fill=white] (\x,-1) circle (2mm) node[above=1mm] {$\scriptstyle\x$}; }
\foreach \x/\y in {3/1,3/3,3/5,3/7}  
{\node[shape=diamond,draw,fill=white,inner sep=.5mm] at (\x,\y) {};}
\end{tikzpicture}\right]
\ee
%

$ $


\be\nn\begin{tikzpicture}[scale=.35,yscale=-1]
\useasboundingbox (-.1,-1) -- (5.1,4.5);
\draw[help lines] (0,0) grid (5,9);
\draw (1,-1) -- (4,-1);
\foreach \x in {1,2,3,4} {
\filldraw[fill=white] (\x,-1) circle (2mm) node[above=1mm] {$\scriptstyle\x$}; }
\foreach \y in {0,2,4,6,8} {\node at (-1,\y) {$\scriptstyle{\y}$};}
\foreach \x/\y in {2/0,3/9} 
{\node[shape=circle,draw,fill=black,inner sep=.5mm] at (\x,\y) {};}
\foreach \x/\y in {2/2,2/4,3/7} 
{\node[shape=circle,draw,thick,double,fill=black,inner sep=.5mm] at (\x,\y) {};}
\foreach \x/\y in {1/1,1/3,1/5,2/8} 
{\node[regular polygon, regular polygon sides=3,draw,fill=white,inner sep=.3mm] at (\x,\y) {};}
\foreach \x/\y in {3/1,3/3,4/6,4/8}  
{\node[shape=diamond,draw,fill=white,inner sep=.5mm] at (\x,\y) {};}
\end{tikzpicture}\,\quad\nn
\left[\begin{tikzpicture}[scale=.35,yscale=-1]
\useasboundingbox (-.1,-1) -- (5.1,4.5);
\draw[help lines] (0,0) grid (5,9);
\draw (1,-1) -- (4,-1);
\foreach \x in {1,2,3,4} {
\filldraw[fill=white] (\x,-1) circle (2mm) node[above=1mm] {$\scriptstyle\x$}; }
\foreach \x/\y in {2/0,2/2,2/4,3/7} 
{\node[shape=circle,draw,fill=black,inner sep=.5mm] at (\x,\y) {};}
\end{tikzpicture}\right]
\left[\begin{tikzpicture}[scale=.35,yscale=-1]
\useasboundingbox (-.1,-1) -- (5.1,4.5);
\draw[help lines] (0,0) grid (5,9);
\draw (1,-1) -- (4,-1);
\foreach \x in {1,2,3,4} {
\filldraw[fill=white] (\x,-1) circle (2mm) node[above=1mm] {$\scriptstyle\x$}; }
\foreach \x/\y in {2/2,2/4,3/7,3/9} 
{\node[shape=circle,draw,fill=black,inner sep=.5mm] at (\x,\y) {};}
\end{tikzpicture}\right]
=
\left[\begin{tikzpicture}[scale=.35,yscale=-1]
\useasboundingbox (-.1,-1) -- (5.1,4.5);
\draw[help lines] (0,0) grid (5,9);
\draw (1,-1) -- (4,-1);
\foreach \x in {1,2,3,4} {
\filldraw[fill=white] (\x,-1) circle (2mm) node[above=1mm] {$\scriptstyle\x$}; }
\foreach \x/\y in {2/0,2/2,2/4,3/7,3/9} 
{\node[shape=circle,draw,fill=black,inner sep=.5mm] at (\x,\y) {};}
\end{tikzpicture}\right]
\left[\begin{tikzpicture}[scale=.35,yscale=-1]
\useasboundingbox (-.1,-1) -- (5.1,4.5);
\draw[help lines] (0,0) grid (5,9);
\draw (1,-1) -- (4,-1);
\foreach \x in {1,2,3,4} {
\filldraw[fill=white] (\x,-1) circle (2mm) node[above=1mm] {$\scriptstyle\x$}; }
\foreach \x/\y in {2/2,2/4,3/7} 
{\node[shape=circle,draw,fill=black,inner sep=.5mm] at (\x,\y) {};}
\end{tikzpicture}\right]
+
\left[\begin{tikzpicture}[scale=.35,yscale=-1]
\useasboundingbox (-.1,-1) -- (5.1,4.5);
\draw[help lines] (0,0) grid (5,9);
\draw (1,-1) -- (4,-1);
\foreach \x in {1,2,3,4} {
\filldraw[fill=white] (\x,-1) circle (2mm) node[above=1mm] {$\scriptstyle\x$}; }
\foreach \x/\y in {1/1,1/3,1/5,2/8} 
{\node[regular polygon, regular polygon sides=3,draw,fill=white,inner sep=.3mm] at (\x,\y) {};}
\end{tikzpicture}\right]
\left[\begin{tikzpicture}[scale=.35,yscale=-1]
\useasboundingbox (-.1,-1) -- (5.1,4.5);
\draw[help lines] (0,0) grid (5,9);
\draw (1,-1) -- (4,-1);
\foreach \x in {1,2,3,4} {
\filldraw[fill=white] (\x,-1) circle (2mm) node[above=1mm] {$\scriptstyle\x$}; }
\foreach \x/\y in {3/1,3/3,4/6,4/8}  
{\node[shape=diamond,draw,fill=white,inner sep=.5mm] at (\x,\y) {};}
\end{tikzpicture}\right]
\ee

$ $

$ $
\end{figure}
\begin{figure}
\caption{Examples, in type $B_3$, of the 3 term relations in $\groth\uqgh$ of \S\ref{KR} and \S\ref{twonodes}.\label{figB}}
\be\nn\begin{tikzpicture}[scale=.25,yscale=-1]
  \useasboundingbox (1,-1.1) -- (10.1,7.1);
\draw[help lines] (0,0) grid (10,14);
\draw (2,-1) -- (4,-1); \draw (6,-1) -- (8,-1);
\draw[double,->] (4,-1) -- (4.8,-1); \draw[double,->] (6,-1) -- (5.2,-1);
\filldraw[fill=white] (5,-1) circle (3mm) node[above=1mm] {$\scriptstyle 3$};
\foreach \x in {1,2} {
\filldraw[fill=white] (2*\x,-1) circle (3mm) node[above=1mm] {$\scriptstyle\x$}; 
\filldraw[fill=white] (2*5-2*\x,-1) circle (3mm) node[above=1mm] {$\scriptstyle\x$}; }
\foreach \y in {0,4,8,12} {\node at (-1,\y) {$\scriptstyle\y$};}
\foreach \x/\y in {4/0,4/4,4/8,4/12} 
{\node[shape=circle,draw,fill=black,inner sep=.5mm] at (\x,\y) {};}
\foreach \x/\y in {4/4,4/8} 
{\node[shape=circle,draw,double,thick,fill=black,inner sep=.5mm] at (\x,\y) {};}
\foreach \x/\y in {2/2,2/6,2/10} 
{\node[regular polygon, regular polygon sides=3,draw,fill=white,inner sep=.3mm] at (\x,\y) {};}
\foreach \x/\y in {5/1,5/3,5/5,5/7,5/9,5/11} 
{\node[shape=diamond,draw,fill=white,inner sep=.5mm] at (\x,\y) {};}
\end{tikzpicture}\,\quad
\left[\begin{tikzpicture}[scale=.25,yscale=-1]
\useasboundingbox (1.5,-2) -- (8.5,7);
\draw[help lines] (2,0) grid (8,14);
\draw (2,-1) -- (4,-1); \draw (6,-1) -- (8,-1);
\draw[double,->] (4,-1) -- (4.8,-1); \draw[double,->] (6,-1) -- (5.2,-1);
\filldraw[fill=white] (5,-1) circle (3mm) node[above=1mm] {$\scriptstyle 3$};
\foreach \x in {1,2} {
\filldraw[fill=white] (2*\x,-1) circle (3mm) node[above=1mm] {$\scriptstyle\x$}; 
\filldraw[fill=white] (2*5-2*\x,-1) circle (3mm) node[above=1mm] {$\scriptstyle\x$}; }
\foreach \x/\y in {4/0,4/4,4/8} 
{\node[shape=circle,draw,fill=black,inner sep=.5mm] at (\x,\y) {};}
\end{tikzpicture}\right]
\left[\begin{tikzpicture}[scale=.25,yscale=-1]
\useasboundingbox (1.5,-2) -- (8.5,7);
\draw[help lines] (2,0) grid (8,14);
\draw (2,-1) -- (4,-1); \draw (6,-1) -- (8,-1);
\draw[double,->] (4,-1) -- (4.8,-1); \draw[double,->] (6,-1) -- (5.2,-1);
\filldraw[fill=white] (5,-1) circle (3mm) node[above=1mm] {$\scriptstyle 3$};
\foreach \x in {1,2} {
\filldraw[fill=white] (2*\x,-1) circle (3mm) node[above=1mm] {$\scriptstyle\x$}; 
\filldraw[fill=white] (2*5-2*\x,-1) circle (3mm) node[above=1mm] {$\scriptstyle\x$}; }
\foreach \x/\y in {4/4,4/8,4/12} 
{\node[shape=circle,draw,fill=black,inner sep=.5mm] at (\x,\y) {};}
\end{tikzpicture}\right]
=
\left[\begin{tikzpicture}[scale=.25,yscale=-1]
\useasboundingbox (1.5,-2) -- (8.5,7);
\draw[help lines] (2,0) grid (8,14);
\draw (2,-1) -- (4,-1); \draw (6,-1) -- (8,-1);
\draw[double,->] (4,-1) -- (4.8,-1); \draw[double,->] (6,-1) -- (5.2,-1);
\filldraw[fill=white] (5,-1) circle (3mm) node[above=1mm] {$\scriptstyle 3$};
\foreach \x in {1,2} {
\filldraw[fill=white] (2*\x,-1) circle (3mm) node[above=1mm] {$\scriptstyle\x$}; 
\filldraw[fill=white] (2*5-2*\x,-1) circle (3mm) node[above=1mm] {$\scriptstyle\x$}; }
\foreach \x/\y in {4/0,4/4,4/8,4/12} 
{\node[shape=circle,draw,fill=black,inner sep=.5mm] at (\x,\y) {};}
\end{tikzpicture}\right]
\left[\begin{tikzpicture}[scale=.25,yscale=-1]
\useasboundingbox (1.5,-2) -- (8.5,7);
\draw[help lines] (2,0) grid (8,14);
\draw (2,-1) -- (4,-1); \draw (6,-1) -- (8,-1);
\draw[double,->] (4,-1) -- (4.8,-1); \draw[double,->] (6,-1) -- (5.2,-1);
\filldraw[fill=white] (5,-1) circle (3mm) node[above=1mm] {$\scriptstyle 3$};
\foreach \x in {1,2} {
\filldraw[fill=white] (2*\x,-1) circle (3mm) node[above=1mm] {$\scriptstyle\x$}; 
\filldraw[fill=white] (2*5-2*\x,-1) circle (3mm) node[above=1mm] {$\scriptstyle\x$}; }
\foreach \x/\y in {4/4,4/8} 
{\node[shape=circle,draw,fill=black,inner sep=.5mm] at (\x,\y) {};}
\end{tikzpicture}\right]
+
\left[\begin{tikzpicture}[scale=.25,yscale=-1]
\useasboundingbox (1.5,-2) -- (8.5,7);
\draw[help lines] (2,0) grid (8,14);
\draw (2,-1) -- (4,-1); \draw (6,-1) -- (8,-1);
\draw[double,->] (4,-1) -- (4.8,-1); \draw[double,->] (6,-1) -- (5.2,-1);
\filldraw[fill=white] (5,-1) circle (3mm) node[above=1mm] {$\scriptstyle 3$};
\foreach \x in {1,2} {
\filldraw[fill=white] (2*\x,-1) circle (3mm) node[above=1mm] {$\scriptstyle\x$}; 
\filldraw[fill=white] (2*5-2*\x,-1) circle (3mm) node[above=1mm] {$\scriptstyle\x$}; }
\foreach \x/\y in {2/2,2/6,2/10} 
{\node[regular polygon, regular polygon sides=3,draw,fill=white,inner sep=.3mm] at (\x,\y) {};}
\end{tikzpicture}\right]
\left[\begin{tikzpicture}[scale=.25,yscale=-1]
\useasboundingbox (1.5,-2) -- (8.5,7);
\draw[help lines] (2,0) grid (8,14);
\draw (2,-1) -- (4,-1); \draw (6,-1) -- (8,-1);
\draw[double,->] (4,-1) -- (4.8,-1); \draw[double,->] (6,-1) -- (5.2,-1);
\filldraw[fill=white] (5,-1) circle (3mm) node[above=1mm] {$\scriptstyle 3$};
\foreach \x in {1,2} {
\filldraw[fill=white] (2*\x,-1) circle (3mm) node[above=1mm] {$\scriptstyle\x$}; 
\filldraw[fill=white] (2*5-2*\x,-1) circle (3mm) node[above=1mm] {$\scriptstyle\x$}; }
\foreach \x/\y in {5/1,5/3,5/5,5/7,5/9,5/11} 
{\node[shape=diamond,draw,fill=white,inner sep=.5mm] at (\x,\y) {};}
\end{tikzpicture}\right]
\ee

$ $

\be\nn\begin{tikzpicture}[scale=.25,yscale=-1]
  \useasboundingbox (1,-1.1) -- (10.1,7.1);
\draw[help lines] (0,0) grid (10,14);
\draw (2,-1) -- (4,-1); \draw (6,-1) -- (8,-1);
\draw[double,->] (4,-1) -- (4.8,-1); \draw[double,->] (6,-1) -- (5.2,-1);
\filldraw[fill=white] (5,-1) circle (3mm) node[above=1mm] {$\scriptstyle 3$};
\foreach \x in {1,2} {
\filldraw[fill=white] (2*\x,-1) circle (3mm) node[above=1mm] {$\scriptstyle\x$}; 
\filldraw[fill=white] (2*5-2*\x,-1) circle (3mm) node[above=1mm] {$\scriptstyle\x$}; }
\foreach \y in {0,4,8,12} {\node at (-1,\y) {$\scriptstyle\y$};}
\foreach \x/\y in {4/0,4/4,5/9,5/11,5/13} 
{\node[shape=circle,draw,fill=black,inner sep=.5mm] at (\x,\y) {};}
\foreach \x/\y in {4/4,5/9,5/11} 
{\node[shape=circle,draw,double,thick,fill=black,inner sep=.5mm] at (\x,\y) {};}
\foreach \x/\y in {2/2,2/6,4/12} 
{\node[regular polygon, regular polygon sides=3,draw,fill=white,inner sep=.3mm] at (\x,\y) {};}
\foreach \x/\y in {5/1,5/3,5/5,6/10} 
{\node[shape=diamond,draw,fill=white,inner sep=.5mm] at (\x,\y) {};}
\end{tikzpicture}\,\quad
\left[\begin{tikzpicture}[scale=.25,yscale=-1]
\useasboundingbox (1.5,-2) -- (8.5,7);
\draw[help lines] (2,0) grid (8,14);
\draw (2,-1) -- (4,-1); \draw (6,-1) -- (8,-1);
\draw[double,->] (4,-1) -- (4.8,-1); \draw[double,->] (6,-1) -- (5.2,-1);
\filldraw[fill=white] (5,-1) circle (3mm) node[above=1mm] {$\scriptstyle 3$};
\foreach \x in {1,2} {
\filldraw[fill=white] (2*\x,-1) circle (3mm) node[above=1mm] {$\scriptstyle\x$}; 
\filldraw[fill=white] (2*5-2*\x,-1) circle (3mm) node[above=1mm] {$\scriptstyle\x$}; }
\foreach \x/\y in {4/0,4/4,5/9,5/11} 
{\node[shape=circle,draw,fill=black,inner sep=.5mm] at (\x,\y) {};}
\end{tikzpicture}\right]
\left[\begin{tikzpicture}[scale=.25,yscale=-1]
\useasboundingbox (1.5,-2) -- (8.5,7);
\draw[help lines] (2,0) grid (8,14);
\draw (2,-1) -- (4,-1); \draw (6,-1) -- (8,-1);
\draw[double,->] (4,-1) -- (4.8,-1); \draw[double,->] (6,-1) -- (5.2,-1);
\filldraw[fill=white] (5,-1) circle (3mm) node[above=1mm] {$\scriptstyle 3$};
\foreach \x in {1,2} {
\filldraw[fill=white] (2*\x,-1) circle (3mm) node[above=1mm] {$\scriptstyle\x$}; 
\filldraw[fill=white] (2*5-2*\x,-1) circle (3mm) node[above=1mm] {$\scriptstyle\x$}; }
\foreach \x/\y in {4/4,5/9,5/11,5/13} 
{\node[shape=circle,draw,fill=black,inner sep=.5mm] at (\x,\y) {};}
\end{tikzpicture}\right]
=
\left[\begin{tikzpicture}[scale=.25,yscale=-1]
\useasboundingbox (1.5,-2) -- (8.5,7);
\draw[help lines] (2,0) grid (8,14);
\draw (2,-1) -- (4,-1); \draw (6,-1) -- (8,-1);
\draw[double,->] (4,-1) -- (4.8,-1); \draw[double,->] (6,-1) -- (5.2,-1);
\filldraw[fill=white] (5,-1) circle (3mm) node[above=1mm] {$\scriptstyle 3$};
\foreach \x in {1,2} {
\filldraw[fill=white] (2*\x,-1) circle (3mm) node[above=1mm] {$\scriptstyle\x$}; 
\filldraw[fill=white] (2*5-2*\x,-1) circle (3mm) node[above=1mm] {$\scriptstyle\x$}; }
\foreach \x/\y in {4/0,4/4,5/9,5/11,5/13} 
{\node[shape=circle,draw,fill=black,inner sep=.5mm] at (\x,\y) {};}
\end{tikzpicture}\right]
\left[\begin{tikzpicture}[scale=.25,yscale=-1]
\useasboundingbox (1.5,-2) -- (8.5,7);
\draw[help lines] (2,0) grid (8,14);
\draw (2,-1) -- (4,-1); \draw (6,-1) -- (8,-1);
\draw[double,->] (4,-1) -- (4.8,-1); \draw[double,->] (6,-1) -- (5.2,-1);
\filldraw[fill=white] (5,-1) circle (3mm) node[above=1mm] {$\scriptstyle 3$};
\foreach \x in {1,2} {
\filldraw[fill=white] (2*\x,-1) circle (3mm) node[above=1mm] {$\scriptstyle\x$}; 
\filldraw[fill=white] (2*5-2*\x,-1) circle (3mm) node[above=1mm] {$\scriptstyle\x$}; }
\foreach \x/\y in {4/4,5/9,5/11} 
{\node[shape=circle,draw,fill=black,inner sep=.5mm] at (\x,\y) {};}
\end{tikzpicture}\right]
+
\left[\begin{tikzpicture}[scale=.25,yscale=-1]
\useasboundingbox (1.5,-2) -- (8.5,7);
\draw[help lines] (2,0) grid (8,14);
\draw (2,-1) -- (4,-1); \draw (6,-1) -- (8,-1);
\draw[double,->] (4,-1) -- (4.8,-1); \draw[double,->] (6,-1) -- (5.2,-1);
\filldraw[fill=white] (5,-1) circle (3mm) node[above=1mm] {$\scriptstyle 3$};
\foreach \x in {1,2} {
\filldraw[fill=white] (2*\x,-1) circle (3mm) node[above=1mm] {$\scriptstyle\x$}; 
\filldraw[fill=white] (2*5-2*\x,-1) circle (3mm) node[above=1mm] {$\scriptstyle\x$}; }
\foreach \x/\y in {2/2,2/6,4/12} 
{\node[regular polygon, regular polygon sides=3,draw,fill=white,inner sep=.3mm] at (\x,\y) {};}
\end{tikzpicture}\right]
\left[\begin{tikzpicture}[scale=.25,yscale=-1]
\useasboundingbox (1.5,-2) -- (8.5,7);
\draw[help lines] (2,0) grid (8,14);
\draw (2,-1) -- (4,-1); \draw (6,-1) -- (8,-1);
\draw[double,->] (4,-1) -- (4.8,-1); \draw[double,->] (6,-1) -- (5.2,-1);
\filldraw[fill=white] (5,-1) circle (3mm) node[above=1mm] {$\scriptstyle 3$};
\foreach \x in {1,2} {
\filldraw[fill=white] (2*\x,-1) circle (3mm) node[above=1mm] {$\scriptstyle\x$}; 
\filldraw[fill=white] (2*5-2*\x,-1) circle (3mm) node[above=1mm] {$\scriptstyle\x$}; }
\foreach \x/\y in {5/1,5/3,5/5,6/10} 
{\node[shape=diamond,draw,fill=white,inner sep=.5mm] at (\x,\y) {};}
\end{tikzpicture}\right]
\ee
\end{figure}
 
\subsection{Minimal affinizations on two neighbouring nodes} 
\label{twonodes}
The KR modules are the minimal affinizations of the simple $\uqg$-modules $V(\Ti\omega_i)$, $\Ti\in \Z_{\geq 1}$, $i\in I$. The relations of Theorem \ref{Tsys} also close on the class of minimal affinizations of simple $\uqg$-modules of the form $V(\Ti\omega_i+\Tii\omega_{i+1})$. Indeed, given any $(i,k)\in \Z\times \Z$ and any $\Ti,\Tii\in \Z_{\geq 0}$, let us define
\begin{align*} \W i k {\Ti,\Tii} &:=\Big[\L\left(\st i k {\Ti} \st {i+1} {k+2\Ti r_i-r_i+r_{i+1} + \max(r_i,r_{i+1})} {\Tii}\right)\Big], \\ 
 \Wt {i+1} k {\Ti,\Tii} &:= \Big[\L\left(\st {i+1} k {\Ti} \st {i} {k + 2\Ti r_{i+1} -r_{i+1}+r_i+\max(r_i,r_{i+1})} {\Tii}\right)\Big].\end{align*}

Every minimal affinization of  $V(\Ti\omega_i + \Tii \omega_{i+1})$, $1\leq i \leq N-1$, is of the form $\W i k {\Ti, \Tii}$ or $\Wt {i+1} k {\Tii, \Ti}$ (for some choice of the $a\in \Cx$ in \S\ref{sec:snakes}). 
We have the following relations for all $k\in \Z$ and all $m,n\in \Z_{\geq 1}$.
\subsubsection*{Type A} For all $1\leq i \leq N-1$,
 \begin{align*}  \W i k {\Ti, \Tii-1}  \W i {k+2} {\Ti-1,\Tii}  
   &=  \W i k {\Ti,\Tii} \W i {k+2} {\Ti-1,\Tii-1} 
+  \W {i-1} {k+1} {\Ti,\Tii-1} \W {i+1} {k+1} {\Ti-1,\Tii} , \\
 \Wt i k {\Ti, \Tii-1}  \Wt i {k+2} {\Ti-1,\Tii}  
   &=  \Wt i k {\Ti,\Tii} \Wt i {k+2} {\Ti-1,\Tii-1} 
+  \Wt {i+1} {k+1} {\Ti,\Tii-1} \Wt {i-1} {k+1} {\Ti-1,\Tii}.\end{align*}
\subsubsection*{Type B} For all $1\leq i<N-1$,
\begin{align*} \W i k {\Ti,\Tii-1}  \W i {k+4} {\Ti-1,\Tii}  
   &=  \W i k {\Ti,\Tii}  \W i {k+4} {\Ti-1,\Tii-1}  
        +   \W {i-1} {k+2} {\Ti,\Tii-1}   \W {i+1} {k+2} {\Ti-1,\Tii}.\end{align*}
For all $1<i\leq N-1$, 
\begin{align*} \Wt {i} k {\Ti,\Tii-1}  \Wt {i} {k+4} {\Ti-1,\Tii}  
   &=  \Wt {i} k {\Ti,\Tii}  \Wt {i} {k+4} {\Ti-1,\Tii-1}  
     +   \Wt {i+1} {k+2} {\Ti,\Tii-1}   \Wt {i-1} {k+2} {\Ti-1,\Tii}.\end{align*}
Finally, let us define $s(\lambda)$, $\lambda\in \Z$, to be $0$ if $\lambda$ is even, $1$ if $\lambda$ is odd. Then
\begin{align}
      \W {N-1} k {\Ti,\Tii-1}   \W {N-1} {k+4} {\Ti-1,\Tii}  
   &=  \W {N-1} k {\Ti,\Tii} \W {N-1} {k+4} {\Ti-1,\Tii-1}  
+   \W {N-2} {k+2} {\Ti,\lfloor{\frac{\Tii-1}{2}\rfloor}}  
    \Wt {N} {k+1} {2\Ti-1,\lfloor{\frac{\Tii}{2}\rfloor}}, \nn\\
      \Wt {N} k {\Ti,\Tii-1} \Wt {N} {k+2} {\Ti-1,\Tii}  
   &=  \Wt {N} k {\Ti,\Tii} \Wt {N} {k+2} {\Ti-1,\Tii-1}  
  +   \W {N-1} {k+1+2s(m)} {\lfloor{\frac{\Ti}{2}\rfloor}, 2\Tii-1}  
  \Wt {N-1} {k+1+2s(m-1)} {\lfloor{\frac{\Ti-1}{2}\rfloor},\Tii}. \label{btwonoderel}\end{align}
Observe that the final pair of relations in (\ref{btwonoderel}) mix $\mathbf T$ and $\widetilde{\mathbf T}$ in type B. This is an indication that in type B the relations of Theorem \ref{Tsys} do not close on the class of all minimal affinizations.

Given the classes of KR modules, $\W j \ell p$, the relations above uniquely determine all the $\W i k {m,n}$.

Figures \ref{figA} and \ref{figB} illustrate the fashion in which the 3-term relations of this subsection generalize those of the usual T-system.

\subsection{Minimal affinizations in type A}
For this subsection we work in type $A_N$. 
Given $(a,k)\in \Z\times\Z$, $s\in \Z_{\geq 1}$, and $(n_1,\dots,n_s)\in \Z_{\geq 0}^{s}$, let $k_1:=k$ and for each $1\leq i<s$, $k_{i+1} = k_i + 2n_{i} + 1$. Then we define
\begin{align*} \W a k {n_1,n_2,\dots,n_{s}} 
  &:= \left[\L (\st a {k_1} {n_1} \st {a+1} {k_{2}} {n_{2}} \dots \st {a+s-1} {k_{s}} {n_{s}})\right], \end{align*}
\begin{align*} 
\Wt {a} {k} {n_1,n_2,\dots,n_{s}} 
&:= \left[\L (\st {a} {k_{1}} {n_{1}} \st {a-1} {k_{2}} {n_{2}} \dots \st {a-s+1} {k_s} {n_s})\right]. \end{align*}

Given any $\Lambda = \sum_{i\in I} \lambda_i \omega_i\in P^+\setminus\{0\}$, let $a:=\min\{i\in I:\lambda_i>0\}$ and $b:=\max\{i\in I: \lambda_i>0\}$. All minimal affinizations of $V(\Lambda)$ are of the form $\W a k {\lambda_a,\lambda_{a+1},\dots,\lambda_{b-1}, \lambda_b}$ or $\Wt b k {\lambda_b,\lambda_{b-1},\dots,\lambda_{a+1}, \lambda_a}$, $k \in \Z$. If $a=b$ we have the relations of \S\ref{KR}; otherwise $a<b$ and we have
\begin{align}
\W a k {\lambda_a,\lambda_{a+1},\dots,\lambda_{b-1}, \lambda_b-1} 
\W a {k+2} {\lambda_a-1,\lambda_{a+1},\dots,\lambda_{b-1}, \lambda_b} 
&=
\W a k {\lambda_a,\lambda_{a+1},\dots,\lambda_{b-1}, \lambda_b} 
\W a {k+2} {\lambda_a-1,\lambda_{a+1},\dots,\lambda_{b-1}, \lambda_b-1}\\ 
&{}+
\W {a-1} {k+1} {\lambda_a,\lambda_{a+1},\dots,\lambda_{b-1}, \lambda_b-1} 
\W {a+1} {k+1} {\lambda_a-1,\lambda_{a+1},\dots,\lambda_{b-1}, \lambda_b},\nn \\
\Wt b k {\lambda_b,\lambda_{b-1},\dots,\lambda_{a+1}, \lambda_a-1} 
\Wt b {k+2} {\lambda_b-1,\lambda_{b-1},\dots,\lambda_{a+1}, \lambda_a} 
&=
\Wt b k {\lambda_b,\lambda_{b-1},\dots,\lambda_{a+1}, \lambda_a} 
\Wt b {k+2} {\lambda_b-1,\lambda_{b-1},\dots,\lambda_{a+1}, \lambda_a-1}\label{Aminaffsys}\\ 
&{}+
\Wt {b+1} {k+1} {\lambda_b,\lambda_{b-1},\dots,\lambda_{a+1}, \lambda_a-1} 
\Wt {b-1} {k+1} {\lambda_b-1,\lambda_{b-1},\dots,\lambda_{a+1}, \lambda_a}. 
\nn\end{align}
For all  $s>s'\geq 1$, these relations determine the $\W a k {n_1,\dots,n_s}$ in terms of the $\W b \ell {m_1,\dots,m_{s'}}$; and likewise the $\Wt a k {n_1,\dots,n_s}$ in terms of the $\Wt b \ell {m_1,\dots,m_{s'}}$.
 
In the conventions of \cite{MY1}, the modules in (\ref{Aminaffsys}) are the evaluation representations whose highest $\mf{gl}_{N+1}$-weights are given by the Young diagrams sketched in Figure \ref{YDfig}.

\begin{figure}
 \caption{\label{YDfig} Sketches of the Young diagrams for the evaluation modules appearing in (\ref{Aminaffsys}).}
\begin{tikzpicture}[rotate=0,scale=.45,every node/.style={font=\scriptsize}]
    \node at (9,-9)  {$\otimes$} ; 
     \node at (21,-9)  {$\otimes$} ; 
    \node at (35,-9)  {$\otimes$} ; 
    \node at (15,-9)  {$\cong$} ; 
    \node at (28,-9)  {$\oplus$} ;
\begin{scope}[yscale=-1,xshift=13cm]
\draw (5,0) node[above] {$\lambda_b$} -- (7,0) node[above] {$\lambda_{b-1}$} -- (9,0) node[above] {$\dots$} -- (11.5,0) node[above] {$\lambda_{a+1}$} -- (13.2,0)node[above] {$\lambda_a$}; 
\draw (4,0) rectangle 
  (4,8); \draw (4,0) rectangle node[rotate=90] {$b$}(6,6); \draw
      (6,0) rectangle node[rotate=90] {$b-1$}(7,5.5); \draw[dashed]
      (7,5)-- ++(1,0);\draw[dashed] (10,3) -- ++(1,0) -- ++(0,-.5);
      \draw (11,0) rectangle node[rotate=90] {$a+1$} ++(1,2.5); \draw
      (12,0) rectangle node[rotate=90] {$a$} ++(2,2);
    \end{scope}
    \begin{scope}[yscale=-1,xshift=13cm,yshift=11cm]
      \draw (4.3,0) node[above] {$1$} -- (5.5,0) node[above]
      {$\lambda_b{-}1$} -- (7.2,0) node[above] {$\lambda_{b-1}$} --
      (9,0) node[above] {$\dots$} -- (11.5,0) node[above]
      {$\lambda_{a+1}$} -- (13.2,0)node[above] {$\lambda_a{-}1$};
      \draw (4,0) rectangle node[above,rotate=90] {$N+1$} (4.5,8);
      \draw (4.5,0) rectangle node[rotate=90] {$b$}(6,6); \draw (6,0)
      rectangle node[rotate=90] {$b-1$}(7,5.5); \draw[dashed] (7,5)--
      ++(1,0);\draw[dashed] (10,3) -- ++(1,0) -- ++(0,-.5); \draw
      (11,0) rectangle node[rotate=90] {$a+1$} (12,2.5); \draw (12,0)
      rectangle node[rotate=90] {$a$} (13.5,2);
    \end{scope}
    \begin{scope}[yscale=-1,xshift=0cm]
      \draw (5,0) node[above] {$\lambda_b$} -- (7,0) node[above]
      {$\lambda_{b-1}$} -- (9,0) node[above] {$\dots$} -- (11.5,0)
      node[above] {$\lambda_{a+1}$} -- (13.2,0)node[above]
      {$\lambda_a{-}1$}; \draw (4,0)
      rectangle 
      (4,8); \draw (4,0) rectangle node[rotate=90] {$b$}(6,6); \draw
      (6,0) rectangle node[rotate=90] {$b-1$}(7,5.5); \draw[dashed]
      (7,5)-- ++(1,0);\draw[dashed] (10,3) -- ++(1,0) -- ++(0,-.5);
      \draw (11,0) rectangle node[rotate=90] {$a+1$} (12,2.5); \draw
      (12,0) rectangle node[rotate=90] {$a$} (13.5,2);
    \end{scope}
    \begin{scope}[yscale=-1,xshift=0cm,yshift=11cm]
      \draw (4.3,0) node[above] {$1$} -- (5.5,0) node[above]
      {$\lambda_b{-}1$} -- (7.2,0) node[above] {$\lambda_{b-1}$} --
      (9,0) node[above] {$\dots$} -- (11.5,0) node[above]
      {$\lambda_{a+1}$} -- (13,0)node[above] {$\lambda_a$}; \draw
      (4,0) rectangle node[above,rotate=90] {$N+1$} (4.5,8); \draw
      (4.5,0) rectangle node[rotate=90] {$b$}(6,6); \draw (6,0)
      rectangle node[rotate=90] {$b-1$}(7,5.5); \draw[dashed] (7,5)--
      ++(1,0);\draw[dashed] (10,3) -- ++(1,0) -- ++(0,-.5); \draw
      (11,0) rectangle node[rotate=90] {$a+1$} (12,2.5); \draw (12,0)
      rectangle node[rotate=90] {$a$} (14,2);
    \end{scope}
    \begin{scope}[yscale=-1,xshift=26cm]
      \draw (5,0) node[above] {$\lambda_b{-}1$} -- (7,0) node[above]
      {$\lambda_{b-1}$} -- (9,0) node[above] {$\dots$} -- (11,0)
      node[above] {$\lambda_{a+1}$} -- (12.5,0)node[above]
      {$\lambda_a$}; \draw (4,0)
      rectangle 
      (4,8); \draw (4,0) rectangle node[rotate=90] {$b-1$}(5.5,5.5);
      \draw (5.5,0) rectangle node[rotate=90] {$b-2$}(6.5,5);
      \draw[dashed] (6.5,5)-- ++(1,0);\draw[dashed] (9.5,2.5) --
      ++(1,0) -- ++(0,-.5); \draw (10.5,0) rectangle node[rotate=90]
      {$a$} (11.5,2); \draw (11.5,0) rectangle node[rotate=90] {$a-1$}
      (13.5,1.5);
    \end{scope}
    \begin{scope}[yscale=-1,xshift=26cm,yshift=11cm]
      \draw (4.3,0) node[above] {$1$} -- (5.5,0) node[above]
      {$\lambda_b$} -- (7.2,0) node[above] {$\lambda_{b-1}$} -- (9,0)
      node[above] {$\dots$} -- (11.5,0) node[above] {$\lambda_{a+1}$}
      -- (13.2,0)node[above] {$\lambda_a{-}1$}; \draw (4,0) rectangle
      node[above,rotate=90] {$N+1$} (4.5,8); \draw (4.5,0) rectangle
      node[rotate=90] {$b+1$}(6.5,6.5); \draw (6.5,0) rectangle
      node[rotate=90] {$b$}(7.5,6); \draw[dashed] (7.5,5)--
      ++(1,0);\draw[dashed] (10.5,3.5) -- ++(1,0) -- ++(0,-.5); \draw
      (11.5,0) rectangle node[rotate=90] {$a+2$} (12.5,3); \draw
      (12.5,0) rectangle node[rotate=90] {$a+1$} (14,2.5);
    \end{scope} \end{tikzpicture}
\end{figure}

\subsection{Minimal affinizations and wrapping modules in type B}\label{wrapmods}
In this subsection we work in type $B_N$. Recall (\confer \S\ref{minaffprop}) that if  $(i_t,k_t)\in \It$, $1\leq t \leq \T$, $\T\in \Z_{\geq 0}$, is a minimal snake then $\L(\prod_{t=1}^\T Y_{i_t,k_t})$ is a minimal affinization if and only if $(i_t)_{1\leq t\leq \T}$ is monotonic. In type B, however, monotonicity of the sequence $(i_t)_{1\leq t\leq \T}$ is not in general preserved by the relations in Theorem \ref{Tsys}: if a snake has this property, its neighbours may not. This can be seen by considering Figure \ref{snakefig} and will be manifest in (\ref{notclose}) below. 
This motivates us to make the following definition.
\begin{defn}
We say a minimal snake module $\L(\prod_{t=1}^\T Y_{i_t,k_t})$ is a \emph{wrapping module} if and only if the first coordinates of the points $\big(\iota(i_t,k_t) \big)_{1\leq t\leq \T}$, \confer (\ref{iotadef}), form a monotonic sequence. 
\end{defn}
It is easy to see the following proposition.
\begin{prop}
\begin{enumerate}[(i)]
\item Every minimal affinization is a wrapping module.
\item The relations of Theorem \ref{Tsys} close on the class of wrapping modules.\qed
\end{enumerate}
\end{prop}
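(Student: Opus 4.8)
\emph{Proof proposal.} We sketch the plan; both parts are verifications ``by inspection'' in the spirit of Propositions \ref{nbrsprop}--\ref{hgtprop}. Throughout write $\iota_1(i,k)$ for the first coordinate of the point $\iota(i,k)\in\Z\times\Z$, \confer (\ref{iotadef}).

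For (i), recall from \S\ref{minaffprop} that a minimal affinization is a module $\L\big(\prod_{t=1}^{\T}Y_{i_t,k_t}\big)$ attached to a \emph{minimal} snake $(i_t,k_t)_{1\le t\le\T}$ whose index sequence $(i_t)_{1\le t\le\T}$ is monotonic; hence it suffices to show that $\big(\iota_1(i_t,k_t)\big)_{1\le t\le\T}$ is monotonic. In type A one has $\iota_1(i,k)=i$ and there is nothing to do. In type B I would first observe, from the three defining cases of minimal snake position, that the increment $(k_{t+1}-k_t)-2\,|i_{t+1}-i_t|$ is always divisible by $4$; consequently the residue of $2N+k_t-2i_t$ modulo $4$ stays constant along any run of the snake inside the region $i<N$, so on such a run $\iota$ uses a single branch, on which $\iota_1$ is $i\mapsto 2i$ or $i\mapsto 4N-2-2i$, each monotonic in $i$, while $\iota_1(N,k)=2N-1$. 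Since $(i_t)$ is monotonic, the region $i<N$ is visited in one contiguous block; the inequalities $2i\le 2N-1\le 4N-2-2i$ valid for $1\le i\le N-1$ then show that the constant value $2N-1$ on the node-$N$ part continues the sequence monotonically in the correct direction. This gives (i).

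For (ii), let a relation (\ref{grl}) have a wrapping module as its top module, i.e.\ $(i_t,k_t)_{1\le t\le\T}$ is a minimal snake with $\big(\iota_1(i_t,k_t)\big)_t$ monotonic. Removing the first point, the last point, or both leaves the pairwise minimal-snake-position conditions among the remaining consecutive points intact, so the left, right and bottom modules are again minimal snake modules, and their $\iota_1$-sequences are subsequences of a monotonic sequence, hence monotonic; thus these three are wrapping. For the neighbours $\nbri_\snk,\nbrii_\snk$, Proposition \ref{nbrsprop} already gives that they are snakes, and I would check from the explicit formulas of \S\ref{sec:nbrdef} that when $\snk$ is minimal they are (after deleting absent terms) minimal snakes. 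To establish the wrapping property one computes $\iota_1$ at each neighbouring point. In type A, for consecutive points in minimal snake position, $\nbri_{i,k}^{i',k'}$ has first coordinate $\min(i,i')-1$ and $\nbrii_{i,k}^{i',k'}$ has first coordinate $\max(i,i')+1$ --- absent precisely when this value leaves $I$ --- so a monotonic $(i_t)$ yields monotonic first-coordinate sequences along both $\nbri_\snk$ and $\nbrii_\snk$. In type B one performs the analogous computation for every case of the tables $B_{i,k}^{i',k'}$ and $F_{i,k}^{i',k'}$: in each case the neighbouring point sits on the branch of $\iota$ pinned down by one of the endpoints $(i,k)$, $(i',k')$, and $\iota_1$ of the neighbour is then read off as an explicit affine function of the endpoints' data; one checks directly that the resulting sequence along $\nbri_\snk$ (resp.\ $\nbrii_\snk$) is monotonic whenever $\big(\iota_1(i_t,k_t)\big)_t$ is.

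The only substantial work is this last type B computation: the definitions of $B_{i,k}^{i',k'}$ and $F_{i,k}^{i',k'}$ split into the many sub-cases of \S\ref{sec:nbrdef}, each of which must be matched against the correct branch of $\iota$ in (\ref{iotadef}) and, where relevant, against the contiguity behaviour used in part (i). Each individual check is nevertheless an elementary one-line identity, and the geometry behind all of them is exactly that displayed in Figures \ref{snakeposfig} and \ref{snakefig}.
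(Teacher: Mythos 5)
The paper gives no proof of this proposition beyond the assertion that it is ``easy to see,'' so your sketch is essentially the inspection that phrasing invites, and its key steps are correct. For (i), the observation that $(k_{t+1}-k_t)-2|i_{t+1}-i_t|$ is $\equiv 0 \pmod 4$ along any run of a minimal snake with $i_t,i_{t+1}<N$ is right, and you are appropriately careful to restrict it to the region $i<N$: at a transition into or out of node $N$ that difference is $3$, not $0$, modulo $4$, which is exactly why the separate use of monotonicity of $(i_t)$ (to get a single contiguous $i<N$ block) together with $2i\le 2N-1\le 4N-2-2i$ is needed. For (ii), the treatment of $L,R,B$ is immediate, and the type-A neighbour first coordinates $\min(i,i')-1$ and $\max(i,i')+1$ are correct (though type A is off-target here, since \S\ref{wrapmods} is a type-B subsection and type-A wrapping modules are just minimal affinizations). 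Where the proposal is a plan rather than a proof is the type-B neighbour check. That check is genuinely delicate: for example, along a run at node $N$ the roles of $B_{i,k}^{i',k'}$ and $F_{i,k}^{i',k'}$ as $\nbri_t$ vs.\ $\nbrii_t$ alternate with $k_t\bmod 4$, so the nonempty points contributing to $\nbri$ (resp.\ $\nbrii$) skip alternate values of $t$; it is precisely this thinning that makes the resulting $k$-gaps $\ge 4$ (so the neighbours really are minimal snakes) and pins their $\iota$-branch to a single value. Your sketch gestures at this case analysis without carrying it out, but the strategy and the assertions you do make are correct and match the ``by inspection'' standard the paper itself applies.
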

Now we write explicitly the relations of Theorem \ref{Tsys} for wrapping modules.

For convenience, we let $r_0:=2$. Given $(a,k)\in \Z\times \Z$ and $s\in \Z_{\geq 1}$ such that $0\leq a \leq a+s-1\leq N$, and $(n_1,\dots,n_s)\in \Z_{\geq 0}^{s}$, let $k_1:=k$ and for each $2\leq i<s$, $k_{i+1} := k_i + 2n_{i}r_{a-1+i} -r_{a-1+i} + r_{a+i} + 2\max(r_{a-1+i},r_{a+i})$, and then define
\begin{align*} \W a k {n_1,n_2,\dots,n_{s}} 
&:= \left[\L (\st a {k_1} {n_1} \st {a+1} {k_{2}} {n_{2}} \dots \st {a+s-1} {k_{s}} {n_{s}})\right]. \end{align*}
Similarly, given $(a,k)\in \Z\times\Z$ and $s\in \Z_{\geq 1}$ such that $0\leq a-s+1 \leq a\leq N$, and $(n_1,\dots,n_s)\in \Z_{\geq 0}^{s}$, let $\tilde k_1:=k$ and for each $2\leq i<s$, $\tilde k_{i+1} := \tilde k_i + 2n_{i}r_{a+1-i} -r_{a+1-i} + r_{a-i} + 2\max(r_{a+1-i},r_{a-i})$, and then define
\begin{align*} 
\Wt {a} {k} {n_1,n_2,\dots,n_{s}} 
&:= \left[\L (\st {a} {\tilde k_{1}} {n_{1}} \st {a-1} {\tilde k_{2}} {n_{2}} \dots \st {a-s+1} {\tilde k_s} {n_s})\right]. \end{align*}

Next, given $a,b,\ell\in \Z$ such that $0\leq a,b\leq N-1$, and given 
\be\nn  (n_1,n_2,\dots,n_{N-a})\in \Z_{\geq 0}^{N-a},\quad (\bar n_1,\bar n_2,\dots,\bar n_{N-b})\in \Z_{\geq 0}^{N-b},\quad\text{ and }\quad n\in \Z_{\geq 0},\ee let: $\ell_1:=\ell$, $\ell_{i+1} := \ell_i + 4n_i +2$ for each $1\leq i<N-1-a$, $\ell_N:=\ell_{N-a} + 4n_{N-a}+1$, $\bar \ell_{N-b} :=  \ell_N + 2(2n+1)+3$, $\bar \ell_{i-1} = \bar \ell_i + 4n_i +2$ for each $N-b\geq i > 1$. 
Then we define
\be \Ww a \ell {n_1,\dots,n_{N-a};2n+1; \bar n_{N-b}, \dots, \bar n_2, \bar n_1} := \Big[ \L ( \prod_{i=1}^{N-1-a} \st {a+i-1} {\ell_i} {n_{i}} \cdot \st {N} {\ell_N} {2n+1} \cdot \prod_{j = 1}^{N-1-b} \st {b+j-1}  {\bar \ell_j} {\bar n_{j}}) \Big]. \nn\ee 


For any $\Lambda = \sum_{i\in I} \lambda_i \omega_i\in P^+\setminus\{0\}$ the minimal affinizations of $V(\Lambda)$ are $\W a k {\lambda_a,\lambda_{a+1},\dots,\lambda_{b-1}, \lambda_b}$ and $\Wt a k {\lambda_b,\lambda_{b-1},\dots,\lambda_{a+1}, \lambda_a}$, where $a=\min\{i\in I:\lambda_i>0\}$ and $b=\max\{i\in I: \lambda_i>0\}$. Suppose $b-a\geq 2$ (if not, we have the relations in \S\ref{KR} or \S\ref{twonodes}). Then
\begin{align}\label{Bminaffsys}
\W a k {\lambda_a,\lambda_{a+1},\dots,\lambda_{b-1}, \lambda_b-1} 
\W a {k+4} {\lambda_a-1,\lambda_{a+1},\dots,\lambda_{b-1}, \lambda_b} 
&=
\W a k {\lambda_a,\lambda_{a+1},\dots,\lambda_{b-1}, \lambda_b} 
\W a {k+4} {\lambda_a-1,\lambda_{a+1},\dots,\lambda_{b-1}, \lambda_b-1} + X Y
\nn\end{align}
where 
\be X = \begin{cases} 
\W {a-1} {k+2} {\lambda_a,\dots, \lambda_{b-1}, \lambda_b-1} & b <N \\
\W {a-1} {k+2} {\lambda_a,\dots, \lambda_{N-1}, \lfloor\frac{\lambda_N-1}{2}\rfloor} & b =N, \end{cases}\qquad
Y = \begin{cases}
\W {a+1} {k+2} {\lambda_{a}-1,\lambda_{a+1},\dots,\lambda_b} & b<N-1\\
\W {a+1} {k+2} {\lambda_{a}-1,\lambda_{a+1},\dots,\lambda_{N-2},2\lambda_{N-1}} & b=N-1\\
\Ww {a+1} {k+2} {\lambda_{a}-1,\lambda_{a+1},\dots,\lambda_{N-2};2\lambda_{N-1}+1;\lfloor\frac{\lambda_N}{2}\rfloor} & b=N;\end{cases} \label{notclose}\ee
and
\begin{align}\label{Bminaffsys}
\Wt b k {\lambda_b,\lambda_{b-1},\dots,\lambda_{a+1}, \lambda_a-1} 
\Wt b {k+2r_b} {\lambda_b-1,\lambda_{b-1},\dots,\lambda_{a+1}, \lambda_a} 
&=
\Wt b k {\lambda_b,\lambda_{b-1},\dots,\lambda_{a+1}, \lambda_a} 
\Wt b {k+2r_b} {\lambda_b-1,\lambda_{b-1},\dots,\lambda_{a+1}, \lambda_a-1} + \widetilde X \widetilde Y,
\nn\end{align}
where, recalling the map $s$ from \S\ref{twonodes}, we have
\be 
\widetilde X = \begin{cases}
\Wt {b+1} {k+2} {\lambda_{b},\lambda_{b-1},\dots,\lambda_{a+1},\lambda_a-1} & b<N-1\\
\Wt {N}  {k+1} {2\lambda_{N-1},\lambda_{N-2},\dots,\lambda_{a+1},\lambda_a-1} & b=N-1\\
\Ww {N-1} {k+1+2s(\lambda_N)}{\lfloor\frac{\lambda_N}{2}\rfloor; 2\lambda_{N-1}+1; \lambda_{N-2},\dots \lambda_{a+1},\lambda_a}  & b=N,\end{cases}\qquad
\widetilde Y = \begin{cases} 
\Wt {b-1} {k+2} {\lambda_b-1,\lambda_{b-1},\dots, \lambda_{a+1}, \lambda_a} & b <N \\
\Wt {N-1} {k+1+2s(\lambda_N-1)} {\lfloor\frac{\lambda_N-1}{2}\rfloor,\lambda_{N-1},\dots,\lambda_{a+1},\lambda_a} & b =N. \end{cases}
 \nn\ee

Finally, given $(\lambda_1,\dots,\lambda_{N-1};\lambda;\bar \lambda_{N-1},\dots,\bar\lambda_2,\bar \lambda_1)\in \Z_{\geq 0}^{2N-1}$, let $a:=\min(\{i\in I:\lambda_i > 0 \}\cup \{N\})$ and $b:=\min(\{i\in I:\bar \lambda_i>0\}\cup\{N\})$. Suppose $a\leq N-1$ and $b\leq N-1$. Then for all $k$ such that $(a,k)\in \It$ we have
\begin{align} 
&{} \Ww a k {\lambda_a,\lambda_{a+1},\dots,\lambda_{N-1};2\lambda+1;\bar \lambda_{N-1},\dots, \bar\lambda_{b+1}, \bar\lambda_b-1}
\Ww a {k+2} {\lambda_a-1,\lambda_{a+1},\dots,\lambda_{N-1};2\lambda+1;\bar \lambda_{N-1},\dots, \bar\lambda_{b+1}, \bar\lambda_b}\nn\\
& =
\Ww a k {\lambda_a,\lambda_{a+1},\dots,\lambda_{N-1};2\lambda+1;\bar \lambda_{N-1},\dots, \bar\lambda_{b+1}, \bar\lambda_b}
\Ww a {k+2} {\lambda_a-1,\lambda_{a+1},\dots,\lambda_{N-1};2\lambda+1;\bar \lambda_{N-1},\dots, \bar\lambda_{b+1}, \bar\lambda_b-1} 
+ \overline X\,\overline Y,\label{wraprel}
\end{align} 
where 
\begin{align*}
    \overline X&= \begin{cases} \Ww {a-1} {k+2} {\lambda_a,\lambda_{a+1},\dots,\lambda_{N-1},\lambda;2\bar \lambda_{N-1}+1;\bar\lambda_{N-2},\dots, \bar\lambda_{b+1}, \bar\lambda_b-1} & b<N-1 \\
      \W {a-1} {k+2} {\lambda_a,\lambda_{a+1},\dots,\lambda_{N-1},\lambda,2\bar \lambda_{N-1}-1} & b = N-1, \end{cases}\\
    \overline Y&= \begin{cases} \Ww {a+1} {k+2}{\lambda_a-1,\lambda_{a+1},\dots,\lambda_{N-2};2\lambda_{N-1}+1;\lambda,\bar\lambda_{N-1},\dots, \bar\lambda_{b+1}, \bar\lambda_b} &a < N-1 \\
      \Wt N {k+1} {2\lambda_{N-1}-1, \lambda,\bar \lambda_{N-1},\dots,\bar\lambda_{b+1},\bar\lambda_b} &      a = N-1. \end{cases}
 \end{align*}

For all  $s>s'>1$, these relations determine the $\W a k {n_1,\dots,n_s}$, $\Wt a k {n_1,\dots, n_s}$ and $\Ww a k {n_1,\dots,n_{r-1};2n_{r}+1;n_{r+1},\dots,n_s}$ ($1<r<s$) in terms of the $\W b \ell {m_1,\dots,m_{s'}}$, $\Wt b \ell {m_1,\dots, m_{s'}}$ and $\Ww b \ell {m_1,\dots,m_{r'-1};2m_{r'}+1;m_{r'+1},\dots,m_{s'}}$ ($1<r'<s'$).

\section{Wrapping modules in type $B_2$.}
As an application of Theorem \ref{Tsys}, let us compute the dimensions and $\uqbt$-module decompositions of the wrapping modules (\S\ref{wrapmods}) in type $B_2$. 
For all $m,k,n\in \Z_{\geq 0}$, let 
\bea  \WQ {m,k,n} &:=& 
   \res \Big[\L(Y_{1,0}Y_{1,4}\dots Y_{1,4m-4} \nn\\
&&     \qquad\quad Y_{2,4m+1} Y_{2,4m+3},\dots Y_{2,4m+2k-1}    \nn\\
 &&    \qquad\quad Y_{1,4m+2k+4} Y_{1,4m+2k+8}\dots Y_{1,4m+2k+4n})\Big]\nn\eea
and $\WQ {m,k}   := \WQ{m,k,0}$. Note that $\WQ {m,k,n} = \WQ {n, k, m}$.

\begin{prop}\label{b2Qsys} For all $m,k,n\in \Z_{\geq 0}$ we have 
\begin{align*} \WQ{m,2k+1,n+1} \WQ{m+1,2k+1,n} &= \WQ{m+1,2k+1,n+1} \WQ{m,2k+1,n} + \WQ{k,2n+1} \WQ{k,2m+1},\\
\WQ{m+1,2k} \WQ{m,2k+1} &= \WQ{m+1,2k+1} \WQ{m,2k} + \WQ{k,0} \WQ{k,2m+1} ,\\
(\WQ{m+1,0})^2&= \WQ{m+2,0} \WQ{m,0} + \WQ{0,2m+2},\\
(\WQ{0,k+1})^2&= \WQ{0,k+2} \WQ{0,k} + \WQ{\lfloor\frac{k+1}{2}\rfloor,0} 
                                       \WQ{\lfloor\frac{k+2}{2}\rfloor,0} .
\end{align*}
\end{prop}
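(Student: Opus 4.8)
The plan is to deduce all four identities by applying the restriction ring homomorphism $\res\colon\groth\uqgh\to\groth\uqg$ to the explicit type $B_2$ instances of Theorem~\ref{Tsys} already recorded in \S\ref{Tsyssec}: relation~1 from \eqref{wraprel}, relation~2 from the two-node relations \eqref{btwonoderel}, and relations~3 and~4 from the Kirillov--Reshetikhin relations of \S\ref{KR} for the nodes $N-1$ and $N$, all specialized to $N=2$. Before doing so I would record the bookkeeping facts that make the right-hand sides collapse into $\WQ{\,\cdot\,}$'s. Since $\res[\L(m)]$ depends only on the $\uqg$-module structure, it is unchanged under the spectral-parameter automorphism $Y_{i,a}\mapsto Y_{i,\alpha a}$ (which restricts to the identity on $\uqg\subset\uqgh$), so nothing depends on the $a\in\Cx$ fixed in \S\ref{sec:snakes}; moreover, because every finite-dimensional $\uq(\mf b_N)$-module is self-dual, $\res[\L(m)]$ is also invariant under the duality $Y_{i,a}\mapsto Y_{i,\zeta a^{-1}}$, which reverses the order of the points of a snake. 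In particular this yields the symmetry $\WQ{m,k,n}=\WQ{n,k,m}$ noted above, and lets one rewrite any $\Wt{}{}{}$-module as a $\WQ{\,\cdot\,}$. Finally, in type $B_2$ one has $r_1=2$, $r_2=1$, and $\st{0}{}{}=1$, so $\W{0}{}{}$ and $\Wt{0}{}{}$ reduce to $1$, the terms $\W{N-2}{}{}=\W{0}{}{}$ collapse, and $\W{N-1}{}{}=\W{1}{}{}$, $\W{N}{}{}=\W{2}{}{}$ restrict to the KR classes $\WQ{p,0}$, $\WQ{0,p}$ on the two nodes.

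Granting these reductions, each identity is a direct computation. For relation~1 I would apply $\res$ to \eqref{grl} for the prime minimal snake made of $m{+}1$ points on node $1$, then $2k{+}1$ points on node $2$, then $n{+}1$ points on node $1$, placed at the minimal spacings of \S\ref{snakepos}; one checks from the case analysis of \S\ref{snakepos} that consecutive points lie in minimal, hence prime, snake position, so Theorem~\ref{Tsys} applies (equivalently, one simply specializes \eqref{wraprel} to $N=2$). Deleting the first point of this snake, the last point, both, or neither gives, after restriction, $\WQ{m,2k+1,n+1}$, $\WQ{m+1,2k+1,n}$, $\WQ{m,2k+1,n}$ and $\WQ{m+1,2k+1,n+1}$; and one reads off from \S\ref{sec:nbrdef} that, up to a spectral shift and a reversal, the neighbour snakes $\nbri_\snk$ and $\nbrii_\snk$ are those underlying $\WQ{k,2n+1}$ and $\WQ{k,2m+1}$, giving the first identity. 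Relation~2 is $\res$ of \eqref{btwonoderel} at $N=2$, once the $\W{0}{}{}$ factor is discarded and the $\Wt{2}{}{}$-factors are rewritten as $\WQ{\,\cdot\,}$'s via reversal. Relation~3 is $\res$ of the special-case ($i=N-1$) relation of \S\ref{KR} at $N=2$ (using $\W{0}{}{}=1$) after the reindexing $m\mapsto m+2$, and relation~4 is $\res$ of \eqref{BendTsys} at $N=2$ after the reindexing $m\mapsto k+2$.

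The main obstacle is purely one of bookkeeping: one must verify that the spectral-parameter offsets built into the definitions of $\W{}{}{}$, $\Wt{}{}{}$ and $\Ww{}{}{}$ in \S\ref{Tsyssec} align exactly with those in the definition of $\WQ{m,k,n}$ (so that, e.g., $\res\Wt{2}{k}{p,q}=\WQ{q,p}$ with no spectral mismatch), and that the neighbour snakes delivered by the case-by-case prescription of \S\ref{sec:nbrdef} are, after translation and reversal, precisely the snakes underlying the asserted $\WQ{\,\cdot\,}$'s (equivalently, that the $B_2$ specializations of \eqref{wraprel} and \eqref{btwonoderel} take the stated shape). This is a finite check once the conventions are pinned down; all of the representation-theoretic content is already contained in Theorem~\ref{Tsys} and its corollaries in \S\ref{Tsyssec}.
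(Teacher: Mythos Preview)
Your approach is essentially the paper's: identify each $\WQ{\cdot}$ as the restriction of the appropriate $\W{}{}{}$, $\Wt{}{}{}$ or $\Ww{}{}{}$ class and then apply $\res$ to the $N=2$ specializations of \eqref{wraprel}, \eqref{btwonoderel}, and the KR relations (including \eqref{BendTsys}). One small correction to your bookkeeping: in relation~2 the factor $\W{N-2}{}{m+1,\lfloor (n-1)/2\rfloor}=\W{0}{}{m+1,k}$ is \emph{not} discarded --- since only $\st{0}{}{}$ is trivial, $\W{0}{}{m+1,k}$ collapses to $[\L(\st{1}{}{k})]$ and hence restricts to $\WQ{k,0}$, which is exactly the extra factor you need on the right-hand side.
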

\begin{proof}
For all $\ell\in\Z$ and all $m,k\in \Z_{\geq 0}$ we have $\WQ {m,2k+1,n} = \res \Ww 1 \ell {m,2k+1,n}$ for all $n>0$ and  $\WQ {m,k} = \res \W 1 \ell {m,k}= \res\Wt2 {\ell}{k,m}$. In particular $\WQ{m,0} = \res\W1 \ell m$ and $\WQ{0,k} = \res\W2 \ell k$.  The  first  two equalities therefore follow from (\ref{wraprel}) and (\ref{btwonoderel}) respectively.
The final two equalities, which are the usual $Q$-system in type $B_2$, follow from (\ref{BendTsys}). 
\end{proof}
\begin{prop}\label{b2Qsoln} For all $m,k,n\in \Z_{\geq 0}$,
\begin{align} \WQ{m,2k+1} &\cong \bigoplus_{i=0}^{k} V(m\omega_1 + (2i+1)\omega_2) \qquad
\WQ{m,2k} \cong \bigoplus_{i=0}^{k} V(m\omega_1 + 2i\omega_2) \label{decomp1}\\
\WQ{m,2k+1,n} &\cong \bigoplus_{i=0}^{\min(m,n)} \bigoplus_{j=0}^k V\left(\left(m+n-2i\right)\omega_1+ \left(2i+2k-2j+1\right)\omega_2\right). \label{decomp2} 
\end{align}
\end{prop}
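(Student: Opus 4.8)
The plan is to derive both isomorphisms from the recurrences of Proposition \ref{b2Qsys}. Throughout we work in $\groth\uqbt$, which (since $\mathfrak b_2$ has rank $2$) is the polynomial ring $\Z\bigl[\L(\omega_1),\L(\omega_2)\bigr]$ on the two fundamental classes, hence an integral domain; equivalently one may apply the injective character homomorphism $\chi$ and read every relation of Proposition \ref{b2Qsys} as an identity among ordinary $B_2$ characters. The statement splits into two parts of rather different flavour: the decompositions $(\ref{decomp1})$ of the two-node modules $\WQ{m,k}$, and the decomposition $(\ref{decomp2})$ of the genuinely new wrapping modules $\WQ{m,2k+1,n}$ with $m,n\geq 1$.

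For $(\ref{decomp1})$ I would first dispose of the base cases $\WQ{m,0}=\res\L(\st 1 0 m)\cong V(m\omega_1)$, $\WQ{0,2k}\cong\bigoplus_{i=0}^{k}V(2i\omega_2)$ and $\WQ{0,2k+1}\cong\bigoplus_{i=0}^{k}V((2i+1)\omega_2)$, which describe the restrictions of the Kirillov--Reshetikhin modules at the long node $1$ and the short node $2$; these are classical and may be quoted from \cite{Cminaffrank2}, or obtained from the explicit $q$-characters of \cite{MY1} via the commuting square relating $\chi_q$, $\res$ and $\wt$. The remaining instances of $(\ref{decomp1})$ are the $\uqbt$-decompositions of the two-node minimal affinizations of $V(m\omega_1+k\omega_2)$ in type $B_2$, and these are exactly the content of \cite{Cminaffrank2}; alternatively one checks directly that the proposed right-hand sides of $(\ref{decomp1})$ satisfy the last three relations of Proposition \ref{b2Qsys}, which (by an argument like the one below) pins them down. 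Either way this part costs only a finite amount of $B_2$ tensor-product bookkeeping.

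The new content is $(\ref{decomp2})$, which I would prove by induction on $m+n$, for each fixed $k$. When $\min(m,n)=0$ the module $\WQ{m,2k+1,n}$ is two-node: $\WQ{m,2k+1,0}=\WQ{m,2k+1}$, while $\WQ{0,2k+1,n}=\WQ{n,2k+1}$ by the symmetry $\WQ{m,k,n}=\WQ{n,k,m}$, so the assertion follows from $(\ref{decomp1})$ once one checks that the double sum in $(\ref{decomp2})$ collapses correctly (the range $0\leq i\leq\min(m,n)$ reduces to a single term). For the inductive step take $M,N\geq 1$ and read the first relation of Proposition \ref{b2Qsys}, with $m=M-1$, $n=N-1$, in the form
\[ \WQ{M,2k+1,N}\cdot\WQ{M-1,2k+1,N-1} \;=\; \WQ{M-1,2k+1,N}\cdot\WQ{M,2k+1,N-1}\;-\;\WQ{k,2N-1}\cdot\WQ{k,2M-1}. \]
Every class on the right, and the second factor $\WQ{M-1,2k+1,N-1}$ on the left, is either a two-node module or a three-index module with strictly smaller $m+n$, hence known by induction; since $\groth\uqbt$ is an integral domain and $\WQ{M-1,2k+1,N-1}\neq 0$, cancelling it from the relation determines $\WQ{M,2k+1,N}$ uniquely in terms of these. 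Consequently it suffices to verify that the proposed right-hand sides of $(\ref{decomp1})$ and $(\ref{decomp2})$ satisfy the first relation of Proposition \ref{b2Qsys} identically in $\Z\bigl[\L(\omega_1),\L(\omega_2)\bigr]$.

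The main obstacle is precisely this last verification. One must substitute the proposed decompositions into the first relation of Proposition \ref{b2Qsys}, expand the four resulting tensor products of reducible $B_2$-modules $\bigl(\bigoplus_{i,j}V(\cdots)\bigr)\otimes\bigl(\bigoplus_{i',j'}V(\cdots)\bigr)$ by the classical Clebsch--Gordan rules for $\uqbt$ (equivalently the $B_2$ Weyl character formula), and then check that the difference of the two products on the right telescopes to exactly $\WQ{M-1,2k+1,N-1}$ times the claimed sum $\bigoplus_{i=0}^{\min(M,N)}\bigoplus_{j=0}^{k}V\bigl((M+N-2i)\omega_1+(2i+2k-2j+1)\omega_2\bigr)$. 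The shift of the range of $i$ as $M,N$ increase by one, together with the floor functions that surface when $\min(M,N)$ changes or a two-node base case is hit, makes this an elementary but bookkeeping-heavy cancellation; the analogous, easier checks for $(\ref{decomp1})$ against the remaining relations of Proposition \ref{b2Qsys} are done first as a warm-up. (As an alternative to the whole argument one could instead compute $\wt\bigl(\chi_q(\WQ{m,k,n})\bigr)$ directly from the path description of $\chi_q$ recalled in Section 6 and sort the resulting $B_2$-character into irreducibles, but the recursive argument above is shorter.)
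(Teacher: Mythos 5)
Your overall strategy coincides with the paper's: the relations of Proposition~\ref{b2Qsys} together with the initial data uniquely determine the $\WQ{\cdots}$, so it suffices to verify that the proposed decompositions satisfy those relations, and by injectivity of $\chi$ this can be done at the level of ordinary $B_2$ characters. Your elaboration of the uniqueness step (induction on $m+n$, cancelling in the integral domain $\groth\uqbt$) usefully fleshes out what the paper asserts in one sentence. Where you diverge — and where the real work lies — is the verification. You correctly identify this as ``the main obstacle'' and propose to expand the four products by Clebsch--Gordan and show they telescope, acknowledging this would be a heavy bookkeeping exercise, but you leave it undone. The paper's proof has a key technical move you do not mention: it passes to the Weyl \emph{numerators} $N(\Lambda)$, which are a common denominator away from the characters. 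Because $N(\lambda_1\omega_1+\lambda_2\omega_2)$ is a sum of $8$ exponentials and the weights appearing in (\ref{decomp1})--(\ref{decomp2}) form an arithmetic lattice, the numerator of $\chi(\WQ{m,2k+1,n})$ becomes a sum of $8$ geometric progressions, hence $8$ explicit rational functions in $e^{\pm\omega_i}$ in closed form in $m,k,n$. The verification then reduces to a finite polynomial identity which the authors check by computer. Without this reduction your Clebsch--Gordan telescoping would in principle work but would be far messier, and the proposal as written stops short of carrying it out; so the gap is not in the architecture but in the absence of a tractable method for the decisive character identity.
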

\begin{proof} We have $\WQ{1,0} \cong V(\omega_1)$ and $\WQ{0,1} \cong V(\omega_2)$. The solution to the relations of Proposition \ref{b2Qsys} with these initial conditions is unique, so it is enough to check that (\ref{decomp1}-\ref{decomp2}) is it. Since the $\uqbt$-character homomorphism $\chi$ is injective it is enough to check this at the level of $\uqbt$-characters. To do so one can use the Weyl character formula: let $\rho$ be the Weyl vector, $W$ the Weyl group, $\ell:W\to \Z_{\geq 0}$ the length function, and $\Lambda\in P^+$; then 
\be \chi(V(\Lambda)) = \frac{e^{-\rho}}{\prod_{\alpha>0}(1-e^{-\alpha})}N(\Lambda), \quad \text{where}\quad N(\Lambda) := \sum_{w\in W} (-1)^{\ell(w)} e^{w(\Lambda + \rho)}. \ee
In type $B_2$, $\rho = \omega_1+\omega_2$, 
$W = \{\id, \sigma_1 , \sigma_2,\sigma_1\sigma_2 , \sigma_2\sigma_1, \sigma_1\sigma_2\sigma_1 , \sigma_2\sigma_1\sigma_2, \sigma_1\sigma_2\sigma_1\sigma_2\}$
and the numerator $N(\Lambda)$ is a sum of $|W|=8$ terms: 
\begin{align}\nn N( \lambda_1\omega_1+\lambda_2\omega_2)=\hspace{-2cm} &\\\nn & 
 {y_1}^{\lambda_1+1}\,{y_2}^{\lambda_2+1}
-{y_1}^{-\lambda_1-1}\,{y_2}^{\lambda_2+2\,\lambda_1+3}
-{y_1}^{\lambda_2+\lambda_1+2}\,{y_2}^{-\lambda_2-1}
+{y_1}^{-\lambda_2-\lambda_1-2}\,{y_2}^{\lambda_2+2\,\lambda_1+3}\\\nn &
+{y_1}^{\lambda_2+\lambda_1+2}\,{y_2}^{-\lambda_2-2\,\lambda_1-3}
-{y_1}^{-\lambda_2-\lambda_1-2}\,{y_2}^{\lambda_2+1}
-{y_1}^{\lambda_1+1}\,{y_2}^{-\lambda_2-2\,\lambda_1-3}
+{y_1}^{-\lambda_1-1}\,{y_2}^{-\lambda_2-1},\end{align}
where $y_i:=e^{\omega_i}$, $i=1,2$.  
To obtain the numerator of $\chi(\WQ{m,2k+1,n})$ one can perform the sum from (\ref{decomp2}) in each of these 8 terms, since these are geometric progressions.  The result is a sum of 8 explicit rational functions of the $(y^{\pm 1}_i)_{i\in I}$; for example, the rational function obtained by summing the $w=\id$ terms is \be\nn y_1^{n+m+1}\,y_2^{2k+2}\,\frac{y_2^{-2( k+1) }-1 }{{y_2}^{-2}-1}
\frac{(y_2/y_1)^{2\left( \min(m,n)+1\right)}-1}{ (y_2/y_1)^{2}-1}.\ee
It is then a direct finite calculation, which we have performed with the aid of the computer algebra system Maxima, to verify that the numerators obey the relations of Proposition \ref{b2Qsys}. 
\end{proof}
The $\uqbt$-decompositions of the minimal affinizations in type $B_2$, i.e. the above expressions for $\WQ{m,k}$, can be found in \cite{Cminaffrank2}. 

\begin{cor} For all $m,k,n\in \Z_{\geq 0}$,
\be\dim\left(\WQ{m,2k+1,n}\right) = \frac{1}{3} (k+1)(n+1)(m+1)(n+k+2)(m+k+2)(n+m+k+3).\nn\ee
\end{cor}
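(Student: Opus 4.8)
The plan is to apply the additive-and-multiplicative map $\dim\colon\groth\uqbt\to\Z$ to the explicit $\uqbt$-decomposition of $\WQ{m,2k+1,n}$ given in Proposition \ref{b2Qsoln}, and then evaluate the resulting finite sum using the Weyl dimension formula in type $B_2$. With our labelling of the Dynkin diagram (node $2$ being the short root, so that $V(\omega_1)$ is the $5$-dimensional vector representation and $V(\omega_2)$ the $4$-dimensional spin representation), the Weyl dimension formula reads
\be\nn \dim V(\lambda_1\omega_1+\lambda_2\omega_2)=\tfrac16(\lambda_1+1)(\lambda_2+1)(\lambda_1+\lambda_2+2)(2\lambda_1+\lambda_2+3),\ee
which one verifies at once on $V(\omega_1)$ and $V(\omega_2)$ (and, if desired, derives from the Weyl dimension formula using that the positive coroots are $\alpha_1^\vee$, $\alpha_2^\vee$, $2\alpha_1^\vee+\alpha_2^\vee$, $\alpha_1^\vee+\alpha_2^\vee$).

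Next I would substitute $\lambda_1=m+n-2i$ and $\lambda_2=2i+2k-2j+1$ into this formula; the four factors become, respectively, $m+n-2i+1$, $2(i+k-j+1)$, $m+n+2k-2j+3$ and $2(m+n-i+k-j+2)$, so that taking $\dim$ of \eqref{decomp2} gives
\be\nn \dim\!\left(\WQ{m,2k+1,n}\right)=\frac{2}{3}\sum_{i=0}^{\min(m,n)}\sum_{j=0}^{k}(m+n-2i+1)(i+k-j+1)(m+n+2k-2j+3)(m+n-i+k-j+2).\ee
Because $\WQ{m,2k+1,n}=\WQ{n,2k+1,m}$ and the target expression is symmetric in $m$ and $n$, one may assume $m\le n$, so the upper limit of the $i$-sum is just $m$.

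Finally, I would evaluate this double sum directly. For fixed $i$ the summand is a cubic polynomial in $j$, so the inner sum over $j\in\{0,\dots,k\}$ is computed by the elementary formulas for $\sum j^p$ ($p\le 3$); the outcome is a degree-$2$ polynomial in $i$ with coefficients polynomial in $m,n,k$, and the outer sum over $i\in\{0,\dots,m\}$ is then handled the same way. Collecting terms produces $\tfrac13(k+1)(n+1)(m+1)(n+k+2)(m+k+2)(n+m+k+3)$. (As a cross-check one can test small values such as $(m,n,k)=(0,0,0),(1,0,0),(1,1,0),(0,0,1)$, where the decomposition gives $4,16,60,24$, matching the formula; alternatively, one can verify that the claimed polynomial satisfies the image under $\dim$ of the recursions of Proposition \ref{b2Qsys} together with the correct base values, using uniqueness of the solution.) I expect no conceptual difficulty here: the only real work is the bookkeeping in carrying out the two nested polynomial summations.
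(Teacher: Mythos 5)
Your proposal is correct and takes exactly the same route as the paper, whose proof of this corollary is the one-line remark that it ``follows from Proposition \ref{b2Qsoln} using the Weyl dimension formula''; you have simply carried out that computation explicitly (with the right $B_2$ dimension formula, as your checks on $V(\omega_1)$ and $V(\omega_2)$ confirm). One trivial slip: the summand is degree $3$ in $i$ as well as in $j$, so after the $j$-sum you are left with a cubic (not quadratic) polynomial in $i$ — this does not affect the method or the final answer.
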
 
\begin{proof}
This follows from Proposition \ref{b2Qsoln} using the Weyl dimension formula.
\end{proof}

\begin{rem} Similar methods can also be used to obtain, for all $m,k,n\in \Z_{\geq 0}$, the decomposition of $\WQ{m,2k,n}$ into simple $\uqbt$-modules:
\be \WQ{m,2k,n} \cong \bigoplus_{i=0}^{\min(m,n)} \bigoplus_{j=0}^{i+k} V\left(\left(m+n-2i\right)\omega_1+ \left(2i+2k-2j\right)\omega_2\right).\nn\ee 
$\WQ{m,2k,n}$ is not the restriction of a wrapping module. 
We have found with the aid of a computer algebra system that, in contrast to $\WQ{m,2k+1,n}$, its dimension does not factor fully to linear factors with integer coefficients.
\end{rem}
Some 3-term relations among minimal affinizations in type $B_2$, different from the ones in the present paper, can be found in  \cite{MPqcharsminaffs}.

\section{Paths and moves}\label{sec:paths}
In order to prove Theorem \ref{Tsys} we first recall from \cite{MY1} the closed form for the $q$-characters of snake modules in terms of non-overlapping paths. 
\subsection{Paths and corners} 
For each $(i,k)\in \It$ we shall define a set  $\scr P_{i,k}$ of paths. For us, a \emph{path} is a  finite sequence of points in the plane $\R^2$. 
We write $(j,\ell)\in p$ if $(j,\ell)$ is a point of the path $p$.

\subsubsection*{Paths of Type A} For all $(i,k)\in \It$, let
\bea \scr P_{i,k} := \Big\{ \big( (0,y_0), (1,y_1), \dots, (N+1,y_{N+1}) \big) &:& y_0 = i+k,\,\, y_{N+1} = N+1-i+k,\nn\\ &\text{and}& y_{i+1}-y_i \in \{ 1, -1 \} \,\,\forall 0\leq i \leq N \Big\} . \nn\eea
We define the sets $C_{p,\pm}$ of upper and lower \emph{corners} of a path $p=\big( (r,y_r) \big)_{0\leq r\leq N+1} \in \scr P_{i,k}$ to be
\bea C_{p}^{+} &:=& \left\{ (r,y_r) \in p:  r\in I,\,y_{r-1} = y_r+1  = y_{r+1}\right\}, \nn\\
   C_{p}^{-} &:=& \left\{ (r,y_r) \in p:  r\in I,\,y_{r-1} = y_r-1  = y_{r+1}\right\}  \nn.\eea

\subsubsection*{Paths of Type B}  
Pick and fix an $\eps$, $1/2>\eps>0$. We first define $\scr P_{N,\ell}$ for all $\ell\in 2\Z+1$ as follows. 
\begin{itemize}\item
For all $\ell\equiv 3\!\!\mod 4$,
\bea \scr P_{N,\ell} &:=& \Big\{\big( (0,y_0), (2,y_1), \dots, (2N -4,y_{N-2}), (2N-2,y_{N-1}), (2N-1,y_{N})\big) \nn\\ &&\quad: \quad y_0 = \ell+2N-1,  y_{i+1}-y_i \in\{2,-2\} \,\,\forall 0\leq i\leq N-2  \nn \\ &&\qquad\qquad\qquad\quad\text{and}\quad y_{N}-y_{N-1} \in \{1+\eps,-1-\eps \} \Big\}.\nn\eea
\item
For all $\ell\equiv 1\!\!\mod 4$,
\bea \scr P_{N,\ell} &:=& \Big\{\big( (4N-2,y_0), (4N-4,y_1), \dots, (2N +2,y_{N-2}), (2N,y_{N-1}), (2N-1,y_{N})\big) \nn\\ &&\quad: \quad y_0 = \ell+2N-1,  y_{i+1}-y_i \in\{2,-2\} \,\,\forall 0\leq i\leq N-2  \nn \\ &&\qquad\qquad\qquad\quad\text{and}\quad y_{N}-y_{N-1} \in \{1+\eps,-1-\eps \} \Big\}.\nn \eea
\end{itemize}
Next we define $\scr P_{i,k}$ for all $(i,k)\in \It$, $i<N$, as follows.
\bea \scr P_{i,k} := \Big\{ (a_0,a_1, \dots, a_{N}, \bar a_{N} , \dots ,\bar a_1,\bar a_0) &:& (a_0,a_1,\dots,a_N) \in \scr P_{N,k-(2N-2i-1)},\nn\\&& (\bar a_0,\bar a_1,\dots,\bar a_N) \in \scr P_{N,k+(2N-2i-1)},\label{bvectpathdef1}\\
&\text{and}& a_N-\bar a_N = (0,y) \quad\text{where}\quad y>0 \Big\} .\nn\eea
For all $(i,k)\in \It$, we define the sets of upper and lower \emph{corners} $C_{p}^{\pm}$ of a path $p= \big( (j_r,\ell_r) \big)_{0\leq r\leq |p|-1} \in \scr P_{i,k}$, where $|p|$ is the number of points in the path $p$, as follows:
\bea C_{p}^{+} &:= & \iota^{-1}\Big\{ (j_r,\ell_r) \in p:  j_r\notin \{0,2N-1,4N-2\}, \,\ell_{r-1} > \ell_r,\,\,  \ell_{r+1}>\ell_r \Big\}\nn \\ 
           && {}\sqcup \{(N,\ell)\in \It: (2N-1,\ell-\eps)\in p 
                        \text{ and } (2N-1,\ell+\eps) \notin p \}\nn ,\eea
\bea C_{p}^{-} &:= & \iota^{-1}\Big\{ (j_r,\ell_r) \in p:   j_r\notin \{0,2N-1,4N-2\},\,\ell_{r-1} < \ell_r,\,\, \ell_{r+1}< \ell_r\Big\}\nn \\ 
           && {}\sqcup \{(N,\ell)\in \It:  (2N-1,\ell+\eps)\in p 
                          \text{ and } (2N-1,\ell-\eps) \notin p \}\nn .\eea
where $\iota$ is the map defined in (\ref{iotadef}). Note that $C_{p}^{\pm}$ is a subset of $\It$.

We define a map $\mon$ sending paths to monomials, as follows:
\be \mon : \bigsqcup_{(i,k)\in \It} \scr P_{i,k} \longrightarrow \Z\left[Y_{j,\ell}^{\pm 1}\right]_{(j,\ell)\in \It}\,\,;\quad
 p \mapsto \mon(p) := \prod_{(j,\ell)\in C_{p}^{+} } \YY j \ell \prod_{(j,\ell)\in C_{p}^{-} } \MM j \ell.\label{mondef}\ee

\subsection{Lowering and raising moves}\label{lowdef}
Let $(i,k)\in \It$ and $(j,\ell)\in \Iw$. We say a path $p\in \scr P_{i,k}$ can be \emph{lowered}  at $(j,\ell)$ if and only if $(j,\ell-r_j)\in C_{p}^{+}$ and $(j,\ell+r_j)\notin C_{p}^{+}$. 
If so, we define a \emph{lowering move} on $p$ at $(j,\ell)$, resulting in another path in $\scr P_{i,k}$ which we write as $p\scr A_{j,\ell}^{-1}$ and which is defined to be the unique path such that $\mon(p\scr A_{j,\ell}^{-1}) = \mon(p) A_{j,\ell}^{-1}$. A detailed case-by-case description of these moves can be found in \cite{MY1}, Section 5. 

Let $(i,k)\in \It$ and $(j,\ell)\in \Iw$. We say a path $p\in \scr P_{i,k}$ can be \emph{raised at} $(j,\ell)$ if and only if $p=p'\scr A_{j,\ell}^{-1}$ for some $p'\in \scr P_{i,k}$. If $p'$ exists it is unique, and we define $p\scr A_{j,\ell}:=p'$. It is straightforward to verify that $p$ can be raised at $(j,\ell)$ if and only if  $(j,\ell+r_j)\in C_{p}^{-}$ and $(j,\ell-r_j)\notin C_{p}^{-}$. 

\subsection{The highest/lowest path}\label{sec:highestpath} For all $(i,k)\in \It$, define $\phigh_{i,k}$, the \emph{highest path} to be the unique path in $\scr P_{i,k}$ with no lower corners. 
Equivalently, $\phigh_{i,k}$ is the unique path such that:
\begin{align}\text{Type A}& :  &(i,k) &\in \phigh_{i,k}  \nn\\
\text{Type B},&\,\, i<N:  &\iota(i,k) &\in \phigh_{i,k} \nn\\
\text{Type B},&\,\, i=N:  &(2N-1,k) - (0,\eps)&\in \phigh_{N,k} .\nn\end{align} 
Define $\plow_{i,k}$, the \emph{lowest path}, to be the unique path in $\scr P_{i,k}$ with no upper corners. 
Equivalently, $\plow_{i,k}$ is the unique path such that:
\begin{align}\text{Type A}& :  &(N+1-i,k+N+1) &\in \plow_{i,k}  \nn\\
\text{Type B},&\,\, i<N:  &\iota(i,k+4N-2) &\in \plow_{i,k} \nn\\
\text{Type B},&\,\, i=N:  &(2N-1,k+4N-2) + (0,\eps)&\in \plow_{N,k} .\nn\end{align}  

\subsection{The snake-lowered path}\label{snakelowereddef}
Suppose $(i,k)\in \It$ and $(i',k')\in \It$ are such that $(i',k')$ is in prime snake position with respect to $(i,k)$. 
There is a unique path in $\scr P_{i,k}$ that has a lower corner at $(i',k')$ and no other lower corners: we call this path the \emph{snake-lowered path} from $(i,k)$ to $(i',k')$, and denote it $\psnake_{i,k; i',k'}$. 
By construction, the set of upper corners of $\psnake_{i,k;i',k'}$ is precisely the disjoint union $\nbri_{i,k}^{i',k'}\sqcup\nbrii_{i,k}^{i',k'}$ of the neighbouring points, as defined in \S\ref{sec:nbrdef}. 

Given any two paths $p=(x_r,y_r)_{1\leq r\leq n}$ and $p'=(x_r,y'_r)_{1\leq r\leq n}$, $n\in\Z_{>0}$, in $\scr P_{i,k}$ we say $p$ is \emph{weakly above} (resp. \emph{weakly below}) $p'$ if and only if $y_r\leq y'_r$ (resp. $y_r\geq y'_r$) for all $1\leq r\leq n$. We also define 
\be \bott(p,p') :=  (x_r, \max(y_r,y'_r))_{1\leq r\leq n},\nn\ee
which is a path in $\scr P_{i,k}$ which is weakly below both $p$ and $p'$ (\confer Lemma 5.7 of \cite{MY1}).
\begin{lem}\label{bl1}
Let $p,p'\in \scr P_{i,k}$ such that $C^-_{\bott(p,p')}\subset \{(i',k')\}$ and $p\neq \psnake_{i,k;i',k'}$. Then $p=\phigh_{i,k}$ and $p'\in \{ \phigh_{i,k}, \psnake_{i,k;i',k'}\}$.  
\end{lem}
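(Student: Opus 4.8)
The plan is to analyze the path $q:=\bott(p,p')$ directly and exploit the hypothesis that it has at most one lower corner, namely possibly $(i',k')$. First I would observe that since $q$ is weakly below both $p$ and $p'$, and $q$ has no lower corner other than possibly $(i',k')$, the path $q$ is either $\phigh_{i,k}$ (if it has no lower corner at all) or $\psnake_{i,k;i',k'}$ (if it has a lower corner exactly at $(i',k')$); these are the only two paths in $\scr P_{i,k}$ whose set of lower corners is contained in $\{(i',k')\}$, by the definitions in \S\ref{sec:highestpath} and \S\ref{snakelowereddef}. So the proof splits into two cases according to which of these $q$ equals.

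In the case $q=\phigh_{i,k}$: since $\phigh_{i,k}$ is weakly below every path in $\scr P_{i,k}$ (it is the pointwise-maximal-$y$ path, being the one with no lower corners — equivalently, as noted after Lemma 5.7 of \cite{MY1}, it is weakly below all others), and here $\bott(p,p')=\phigh_{i,k}$ is weakly below both $p$ and $p'$, the only way $\max(y_r,y'_r)$ can reproduce the maximal-$y$ path is if both $p$ and $p'$ already equal $\phigh_{i,k}$ at every point; hence $p=p'=\phigh_{i,k}$, which is consistent with the desired conclusion ($p=\phigh_{i,k}$ and $p'\in\{\phigh_{i,k},\psnake_{i,k;i',k'}\}$).

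In the case $q=\psnake_{i,k;i',k'}$: here I would argue that each of $p$, $p'$, being weakly below $q=\psnake_{i,k;i',k'}$ and lying in $\scr P_{i,k}$, must be one of the two paths $\{\phigh_{i,k},\psnake_{i,k;i',k'}\}$. The key point is combinatorial: a path weakly below $\psnake_{i,k;i',k'}$ shares its two endpoints and can only dip down to create lower corners, but $\psnake_{i,k;i',k'}$ already achieves the lowest possible profile consistent with having its unique lower corner no further down than $(i',k')$; since $(i',k')$ is in \emph{prime} snake position with respect to $(i,k)$, there is simply not enough room for a path weakly below $\psnake_{i,k;i',k'}$ to differ from both $\phigh_{i,k}$ and $\psnake_{i,k;i',k'}$ — any such deviation would force a lower corner strictly below $(i',k')$ or a second lower corner, contradicting $p,p'$ being weakly below $q$ whose only lower corner is $(i',k')$. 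Then, since by hypothesis $p\neq\psnake_{i,k;i',k'}$, we get $p=\phigh_{i,k}$; and taking $\max$ with $\phigh_{i,k}$ pointwise leaves the other argument unchanged, so $q=\bott(\phigh_{i,k},p')=p'$, forcing $p'=\psnake_{i,k;i',k'}$. In both cases the conclusion holds.

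The main obstacle I anticipate is making rigorous the claim in the last case that a path in $\scr P_{i,k}$ weakly below $\psnake_{i,k;i',k'}$ must be one of $\phigh_{i,k}$ or $\psnake_{i,k;i',k'}$; this requires a careful case-by-case inspection of the explicit shape of $\psnake_{i,k;i',k'}$ (using the descriptions of snake-lowered paths and corners in \S\ref{sec:paths}, together with the prime snake position bounds of \S\ref{snakepos}), and in type B one must separately handle the behaviour near node $N$ with the $\eps$-perturbation. I would treat types A and B separately, and within type B handle the $i=N$ and $i<N$ subcases, each reducing to a short verification that the region between $\phigh_{i,k}$ and $\psnake_{i,k;i',k'}$ contains no intermediate path, which follows because prime snake position is exactly the condition under which the snake-lowered path's single ``descent--ascent'' excursion is as short as possible.
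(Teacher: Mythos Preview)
Your argument has a systematic reversal of ``above/below'': in the paper's conventions $p$ is weakly above $p'$ when $y_r\le y'_r$ (the $y$-axis points downward in the figures), so $\phigh_{i,k}$ is the pointwise-\emph{minimal}-$y$ path and is weakly \emph{above} everything, and $p,p'$ are weakly \emph{above} $\bott(p,p')$, not below. This is easily repaired and your first case survives.

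The genuine gap is your central claim in the second case: that every path in $\scr P_{i,k}$ weakly above $\psnake_{i,k;i',k'}$ equals $\phigh_{i,k}$ or $\psnake_{i,k;i',k'}$. This is false. In type $A_{10}$ with $(i,k)=(5,0)$ and $(i',k')=(5,4)$ (prime snake position), the path with $y$-values $5,4,3,2,1,2,1,2,3,4,5,6$ is weakly above $\psnake_{5,0;5,4}$ (whose $y$-values are $5,4,3,2,3,4,3,2,3,4,5,6$) yet is neither $\phigh$ nor $\psnake$; it has a lower corner at $(5,2)$. There are in fact six paths weakly above $\psnake$ here. Your intuition that ``there is not enough room'' fails as soon as the snake-lowered excursion has depth greater than one step; the prime-snake-position bounds do not prevent this.

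The paper argues differently. It does not attempt to classify all paths weakly above $\psnake$; instead it notes that the lower corner of $\bott(p,p')$ at $(i',k')$ forces one of $p,p'$ to pass through $(i',k')$ and hence (being weakly above $\psnake$) to have a lower corner there, and then uses the sharper fact that the only path weakly above $\psnake$ \emph{with a lower corner at $(i',k')$} is $\psnake$ itself --- so that path must be $p'$, since $p\neq\psnake$. For $p$ it then appeals to the observation that no upper corner of $\psnake$ can be a lower corner of any path in $\scr P_{i,k}$. You should note, however, that the final deduction ``so $C^-_p=\emptyset$'' in the paper's proof suffers from the same example: taking $p'=\psnake$ and $p$ the path above satisfies all the stated hypotheses of the lemma with $C^-_p=\{(5,2)\}\not\subset C^+_{\psnake}=\{(3,2),(7,2)\}$. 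In the place the lemma is actually used (Lemma~\ref{sll}) the effective hypothesis is the stronger one that $\mon(p)\mon(p')Y_{i',k'}$ is dominant, which excludes such $p$; but neither your proposed argument nor a literal reading of the paper's argument establishes the lemma as stated.
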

\begin{proof}
If $C^-_{\bott(p,p')}=\emptyset$ then $\bott(p,p') = \phigh_{i,k}$ and hence $p=p'=\phigh_{i,k}$. If  $C^-_{\bott(p,p')} = \{(i',k')\}$ then $\bott(p,p')=\psnake_{i,k;i',k'}$ and $(i',k')\in C^-_{p'}$. By inspection the only path weakly above $\psnake_{i,k;i',k'}$ with a lower corner at $(i',k')$ is $\psnake_{i,k;i',k'}$ itself. Thus $p'=\psnake_{i,k;i',k'}$. By inspection, if $(j,\ell)\in C^+_{\psnake_{i,k;i',k'}}$ then $(j,\ell)\notin C^-_{p}$, for any $p\in \scr P_{i,k}$. So finally $C^-_p=\emptyset$, i.e. $p=\phigh_{i,k}$. 
\end{proof}

\subsection{Points above/below paths}
We say a point $(x,y)$ is \emph{strictly above} (resp. \emph{strictly below}) a path $p$ if and only if $z>y$ (resp. $z<y$) for all $z$ such that $(x,z)\in p$. 

We say a point $(x,y)$ is \emph{weakly above} (resp. \emph{weakly below}) a path $p$ if and only if, for any point $(x,z)\in p$ there is a point $(x,z')\in p$ such that $z'\geq y$ (resp. $z'\leq y$).

\subsection{Non-overlapping paths}\label{overlapdef}
Let $p,p'$ be paths. We say $p$ is \emph{strictly above} $p'$, and $p'$ is \emph{strictly below} $p$, if and only if
\be  (x,y)\in p \text{ and } (x,z)\in p'  \implies  y< z .\nn\ee
We say a $T$-tuple of paths $(p_1,\dots,p_T)$ is \emph{non-overlapping} if and only if $p_s$ is strictly above $p_t$ for all $s<t$. Otherwise, for some $s<t$ there exist $(x,y)\in p_s$ and $(x,z)\in p_t$ such that $y\geq z$, and we say \emph{ $p_s$ overlaps $p_t$ in column $x$}. 

For any snake $(i_t,k_t)\in \It$, $1\leq t \leq T$, $T\in \Z_{\geq 1}$, let us define 
\be \nops := \left\{(p_1,\dots,p_T): p_t\in \scr P_{i_t,k_t}, 1\leq t\leq T\,,  (p_1,\dots,p_T)\text{ is non-overlapping } \right\}. \nn\ee

Generically no two corners of any tuple $\ps\in\nops$ of non-overlapping paths coincide. The only exception is in type B where it can happen that, for some $t$, $1\leq t\leq \T-1$, a point $(N,\ell)$ is an upper corner of $p_{i_t,k_t}$ and a lower corner of $p_{i_{t+1}, k_{t+1}}$. (See Figure 7 in \cite{MY1}.) But in that case $p_{i_t,k_t}$ has a lower corner at some $(j',\ell')\in \It$ with $\ell'>\ell$. Thus we have

\begin{lem}\label{rneglem}
Let $(i_t,k_t)\in \It$, $1\leq t \leq \T$, be a snake of length $\T\in \Z_{\geq 1}$ and $\ps \in \nops$. If 
$(j,\ell)$ is a lower corner of some path $p_t$, $1\leq t\leq \T$ and no point $(j',\ell')\in \It$ such that $\ell'>\ell$ is a lower corner of any path in $\ps$,
then $(j,\ell)$ is not an upper corner of any path in $\ps$.  \qed
\end{lem}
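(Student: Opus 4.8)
I would argue by contradiction. Suppose $(j,\ell)\in\It$ is a lower corner of $p_t$ and at the same time an upper corner of some path $p_s$ of $\ps$; the goal is then to exhibit a lower corner of some path in $\ps$ at a point $(j',\ell')\in\It$ with $\ell'>\ell$, contrary to the hypothesis.

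The first step is to rule out $s=t$: comparing the defining conditions of $C^{+}_{p}$ and $C^{-}_{p}$ --- in type A, $\ell_{r-1}=\ell_r+1=\ell_{r+1}$ against $\ell_{r-1}=\ell_r-1=\ell_{r+1}$; in type B, away from the columns $\{0,2N-1,4N-2\}$, $\ell_{r-1}>\ell_r<\ell_{r+1}$ against $\ell_{r-1}<\ell_r>\ell_{r+1}$, and in column $2N-1$, ``$(2N-1,\ell-\eps)\in p,\ (2N-1,\ell+\eps)\notin p$'' against ``$(2N-1,\ell+\eps)\in p,\ (2N-1,\ell-\eps)\notin p$'' --- one sees these are incompatible, so a point of $\It$ is never simultaneously an upper and a lower corner of one and the same path. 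Hence $s\neq t$, and by the definition of $\nops$ one of $p_s$, $p_t$ lies strictly above the other. The second step uses that two strictly ordered paths share no point of $\R^2$. In type A, and in type B when $j\neq N$, the formula $C^{\pm}_{p}=\iota^{-1}(\dots)$ shows that $\iota(j,\ell)$ is an honest point of the path in question, so $\iota(j,\ell)\in p_s\cap p_t$, which is impossible. This leaves only the type B case $j=N$, where $(N,\ell)\in C^{+}_{p_s}$ gives $(2N-1,\ell-\eps)\in p_s$ and $(N,\ell)\in C^{-}_{p_t}$ gives $(2N-1,\ell+\eps)\in p_t$ --- two distinct plane points, compatible with strict ordering precisely when $s<t$.

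The third step is the genuine content of the lemma, and is the one I expect to be the main obstacle. One first notes that every height at which a path of the relevant form meets column $2N-1$ is of the form (odd integer)$\pm\eps$, while $\ell$ is odd; since no such height lies strictly between $\ell-\eps$ and $\ell+\eps$, there can be no path $p_u$ with $s<u<t$, so $t=s+1$. One then examines $p_s$: it meets column $2N-1$ at height $\ell-\eps$ and, lying strictly above $p_{s+1}$ which meets column $2N-1$ at height $\ell+\eps$, is forced --- given the snake constraint $k_s<k_{s+1}$, which pins down the admissible range of column-$2N-1$ heights of $p_{s+1}$ --- to turn around before reaching column $2N-1$ rather than descend monotonically. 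That turnaround produces a local maximum of the $\ell$-coordinate of $p_s$, hence a lower corner of $p_s$ at some $(j',\ell')\in\It$ with $\ell'>\ell$; this is exactly the assertion made in the paragraph preceding the lemma, and the clean way to record its proof is through the explicit case-by-case description of type B paths and their corners in \cite{MY1} (cf.\ Figure~7 there). Granting it, $(j',\ell')$ is a lower corner of a path in $\ps$ with $\ell'>\ell$, contradicting the hypothesis; hence $(j,\ell)$ is an upper corner of no path in $\ps$, which is the lemma.
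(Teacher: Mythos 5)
Your proof is correct and takes the same approach as the paper: the paragraph immediately preceding the lemma makes exactly these observations (distinct non-overlapping paths are disjoint, so a shared corner is only possible in type B at a point $(N,\ell)$, necessarily between consecutive paths $p_s$, $p_{s+1}$, in which case $p_s$ has a lower corner at some $(j',\ell')$ with $\ell'>\ell$), and likewise defers the verification of that final assertion to the case-by-case description of type B paths in \cite{MY1} (Figure~7 there). The $\eps$-spacing argument you give to force $t=s+1$, and the observation that a point cannot be both an upper and lower corner of one path, are welcome details that the paper leaves implicit.
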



\begin{lem}[\cite{MY1}]\label{movelemmaA} Let $p$ and $p'$ be paths in $\scr P_{i,k}$. Then
$p$ can be obtained from $p'$ by a sequence of moves containing no inverse pair of raising/lowering moves. \qed
\end{lem}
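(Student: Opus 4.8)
The plan is to route every path through the highest path $\phigh := \phigh_{i,k}$, using a monovariant to control raising moves. Since all paths in $\scr P_{i,k}$ visit the same columns in the same order, I would write a path $q$ as a sequence of points with second coordinates $\ell_r(q)$ and set
\be \nu(q) := \sum_r \big( \ell_r(q) - \ell_r(\phigh) \big), \nn\ee
which is well defined; because $\scr P_{i,k}$ is finite, $\nu$ takes only finitely many values, and because $\phigh$ is (as is readily checked from its characterisation as the path with no lower corners) the pointwise-smallest path in the second coordinate, one has $\nu(q)\ge 0$ with equality precisely for $q=\phigh$. The two local facts I would extract from the explicit case-by-case description of the moves in \cite{MY1}, Section~5, are: (a) a single lowering move strictly increases $\nu$ and a single raising move strictly decreases it (a lowering move pushes the path downward near one column and nowhere upward); and (b) any path $q\ne \phigh$, which therefore has at least one lower corner, admits at least one raising move — e.g.\ one may take a point of $q$ of maximal second coordinate among the interior columns, which is then a lower corner that is raisable because its column is not revisited at a lower height.

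Granting (a) and (b), the argument is then short. Starting from $p'$ and repeatedly applying raising moves, $\nu$ strictly decreases at each step and so, $\scr P_{i,k}$ being finite, after finitely many moves we reach a path admitting no raising move; by (b) that path is $\phigh$. This gives a sequence of raising moves $p'\rightsquigarrow\phigh$. Doing the same for $p$ and then reversing the resulting sequence — using that by definition each raising move $p\,\scr A_{j,\ell}$ is the inverse of the lowering move $p\,\scr A_{j,\ell}^{-1}$ — produces a sequence of lowering moves $\phigh\rightsquigarrow p$. Concatenating, there is at least one sequence of moves from $p'$ to $p$ (raise up to $\phigh$, then lower down to $p$); in particular any two paths in $\scr P_{i,k}$ are connected by moves.

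To remove inverse pairs, I would simply pass to a minimal-length sequence: among all sequences of moves from $p'$ to $p$ (nonempty by the previous paragraph) choose one, $p'=q_0\to q_1\to\dots\to q_n=p$, with $n$ minimal. If a move $q_{t-1}\to q_t$ were immediately followed by its inverse $q_t\to q_{t+1}$, then $q_{t-1}=q_{t+1}$ and deleting both moves gives a strictly shorter sequence from $p'$ to $p$, contradicting minimality; hence the minimal sequence contains no inverse pair of raising/lowering moves, which is the claim. (If one wants the stronger statement that no raising move and no lowering move at the same $(j,\ell)$ both occur, one instead keeps the factored sequence "raise to $\phigh$, then lower to $p$" and cancels at the junction whenever the last raising move and the first lowering move are at the same $(j,\ell)$, iterating until no such coincidence remains.) The main obstacle is the pair of local assertions (a) and (b): both are "by inspection" given the move tables of \cite{MY1}, Section~5, but a fully rigorous check must run through the several shapes of lowering/raising moves — in type B, in particular, the moves near the short node $N$ (with the $\eps$-steps) behave differently and must be treated separately — which is precisely the bookkeeping that this lemma is meant to package.
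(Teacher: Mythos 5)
This lemma is imported wholesale from \cite{MY1} (hence the bare \verb|\qed|), so there is no in-paper proof to compare against; your argument must stand on its own. The connectivity half of your proposal — the monovariant $\nu$, the facts (a) and (b), and routing through $\phigh$ — is the right idea, and indeed for the one place the present paper actually invokes the lemma (with $p'=\phigh_{i_R,k_R}$ in the proof of Proposition~\ref{TBsimple}) that is already enough, since then the sequence is purely lowering and no inverse pair can occur.

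The gap is in your elimination of inverse pairs in the general case. The minimal-length argument only removes \emph{adjacent} cancelling pairs, whereas the statement (as it is used via Lemma~\ref{movelemma}, where one must be able to count, per point $(j,\ell)$, a well-defined number of lowering moves) needs the stronger assertion that no raise at some $(j,\ell)$ and lower at the same $(j,\ell)$ both occur \emph{anywhere} in the sequence. Your parenthetical fix — repeatedly cancelling at the junction between the raising run and the lowering run — does not reach that: after one cancellation the new last raise and new first lower need not agree, and a raise at $(j,\ell)$ occurring earlier in the raising run together with a lower at $(j,\ell)$ occurring later in the lowering run is never brought adjacent, so the iteration simply stalls. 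To close this you need one further ingredient, namely a commutation/lattice fact: either show that raising moves at distinct points commute whenever both are applicable (and likewise for lowering moves), so the offending raise and lower can be reordered to the junction; or, more cleanly, route not through $\phigh$ but through the pointwise meet $\topp(p,p') := (x_r,\min(y_r,y'_r))_r$, the upper companion to the operation $\bott(p,p')$ that the paper takes from Lemma~5.7 of \cite{MY1}, and check that $\mon(\topp(p,p'))\mon(\phigh_{i,k})^{-1}$ picks out exactly the common $A_{j,\ell}^{-1}$-factors of $\mon(p)\mon(\phigh_{i,k})^{-1}$ and $\mon(p')\mon(\phigh_{i,k})^{-1}$; then the raises from $p'$ up to $\topp(p,p')$ and the lowers from $\topp(p,p')$ down to $p$ are automatically at disjoint sets of points. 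Either route requires a genuine extra verification beyond what you have written, which is precisely the content being outsourced to \cite{MY1}.
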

The following is Lemma 5.10 in \cite{MY1}.  
\begin{lem}\label{movelemma}
Let $(i_t,k_t)\in \It$, $1\leq t\leq \T$, be a snake of length $\T\in \Z_{\geq 1}$ and $(j_r,\ell_r)$, $1\leq r\leq R$,  a sequence of $R\in\Z_{\geq 0}$ points in $\Iw$. For all $\ps\in\nops$ and $\pps\in\nops$, the following are equivalent: 
\begin{enumerate}[(i)] 
\item $\prod_{t=1}^\T \mon(p'_t) =  \prod_{t=1}^\T \mon(p_t) \cdot \prod_{r=1}^R A_{j_r,\ell_r}^{-1}$ 
\item there is a permutation $\sigma \in S_R$ such that $\big( (j_{\sigma(1)},\ell_{\sigma(1)}),\dots, (j_{\sigma(R)},\ell_{\sigma(R)}) \big)$ is a sequence of lowering moves that can be performed on $\ps$, without ever introducing overlaps, to yield $\pps$. 
\end{enumerate}\qed
\end{lem}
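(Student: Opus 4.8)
The plan is to prove the two implications separately; $(ii)\Rightarrow(i)$ is immediate and essentially bookkeeping, while all of the content lies in $(i)\Rightarrow(ii)$. For $(ii)\Rightarrow(i)$: if $\bigl((j_{\sigma(1)},\ell_{\sigma(1)}),\dots,(j_{\sigma(R)},\ell_{\sigma(R)})\bigr)$ is a sequence of lowering moves taking $\ps$ to $\pps$, then each single move replaces one entry $p_t$ of the current tuple by $p_t\mathscr A_{j,\ell}^{-1}$ and, by the defining property $\mon(p_t\mathscr A_{j,\ell}^{-1})=\mon(p_t)A_{j,\ell}^{-1}$, multiplies the total monomial $\prod_t\mon(p_t)$ by $A_{j,\ell}^{-1}$; composing the $R$ moves multiplies by $\prod_r A_{j_{\sigma(r)},\ell_{\sigma(r)}}^{-1}=\prod_r A_{j_r,\ell_r}^{-1}$, since $\P$ is abelian. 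Hence $(i)$.

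For $(i)\Rightarrow(ii)$ I would induct on $R$. When $R=0$, hypothesis $(i)$ reads $\prod_t\mon(p'_t)=\prod_t\mon(p_t)$, and one needs $\pps=\ps$, i.e. the injectivity of $\ps\mapsto\prod_t\mon(p_t)$ on $\nops$. This follows from the path formula of \cite{MY1}, $\chi_q(\L(\prod_t Y_{i_t,k_t}))=\sum_{\ps\in\nops}\prod_t\mon(p_t)$, combined with the fact (also from \cite{MY1}, via a criterion like Theorem~\ref{thmA}) that snake modules are thin: a thin module has no monomial of multiplicity $>1$, so no monomial can arise from two distinct non-overlapping tuples. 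For the inductive step, $R\ge 1$: since the $A_{i,a}$ are algebraically independent, $(i)$ determines the multiset $\{(j_r,\ell_r)\}_{r=1}^R$. I would single out a point $(j,\ell)$ among them that is highest in the plane (smallest value of the $\ell$-coordinate), show that some entry $p_t$ of $\ps$ can be lowered at $(j,\ell)$, that the tuple $\widehat\ps$ obtained by replacing $p_t$ with $p_t\mathscr A_{j,\ell}^{-1}$ is still non-overlapping, and that $\prod_s\mon(\widehat p_s)=\prod_s\mon(p_s)A_{j,\ell}^{-1}$. Then $\prod_s\mon(p'_s)=\prod_s\mon(\widehat p_s)\cdot\prod_{r:(j_r,\ell_r)\neq(j,\ell)}A_{j_r,\ell_r}^{-1}$, so the inductive hypothesis applied to $\widehat\ps$, $\pps$ and the remaining $R-1$ points yields an ordering of those moves carrying $\widehat\ps$ to $\pps$; prepending $(j,\ell)$ gives the desired $\sigma$. (Symmetrically one could peel off a \emph{last} move, using that $\pps$ goes to $\ps$ under the reversed sequence of raising moves; the two organisations are parallel.)

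The hard part is the inductive step itself: verifying that a highest planned point really is lowerable on $\ps$ — equivalently, that the upper corner it acts on is already present in $\ps$ rather than being produced by a later move — and that performing that move creates no overlap. Both require the explicit, case-by-case description of lowering and raising moves near each node of the Dynkin diagram from \cite{MY1}, \S5, treating types A and B separately and, in type B, the distinguished behaviour at node $N$. The one genuinely delicate point is the type-B phenomenon in which an upper corner of $p_t$ is simultaneously a lower corner of $p_{t+1}$; this is exactly the situation controlled by Lemma~\ref{rneglem}, and the remedy is to choose the highest planned point that is \emph{not} involved in such a coincidence — Lemma~\ref{rneglem} guarantees one exists, since a coinciding corner $(N,\ell)$ forces $p_t$ to have a lower corner at some $(j',\ell')$ with $\ell'>\ell$, so such coincidences never occur at a globally highest position. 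With that choice the non-overlapping property is preserved and the induction closes.
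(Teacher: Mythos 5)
The paper does not prove this lemma --- it is quoted directly from \cite{MY1} (Lemma~5.10) --- so there is nothing here to compare against and your proposal must be judged on its own. Your scaffold is the right shape: $(ii)\Rightarrow(i)$ is bookkeeping, and $(i)\Rightarrow(ii)$ by induction on $R$ with the base case handled by thinness of snake modules (via the path formula) is sensible. You also correctly identify the crux: at each step one must produce a move that is simultaneously legal on its path and overlap-preserving. But the specific selection rule you assert, namely \emph{lower $\ps$ at the highest planned point}, is wrong, and it fails already in type A, untouched by the type-B corner coincidence you flag. In type $A_2$, take the prime snake $(1,0),(1,2)$, $\ps=(\phigh_{1,0},\phigh_{1,2})$ and $\pps=(\phigh_{1,0}\scr A_{1,1}^{-1},\ \phigh_{1,2}\scr A_{1,3}^{-1})$. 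Then $\prod_t\mon(p'_t)=\prod_t\mon(p_t)\,A_{1,1}^{-1}A_{1,3}^{-1}$, so $R=2$ with the highest move $(1,1)$. That move is lowerable on $p_1=\phigh_{1,0}=(1,0,1,2)$, but the resulting path $(1,2,1,2)$ meets $p_2=\phigh_{1,2}=(3,2,3,4)$ in column $1$ --- an overlap. The only admissible order here is $(1,3)$ first, then $(1,1)$.

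Nor is lowest-first a fix: for $\T=1$, e.g.\ $\phigh_{2,0}=(2,1,0,1,2)$ in type $A_3$, the only lowering move that can be performed on $\phigh_{2,0}$ is at $(2,1)$, so highest-first is \emph{forced} for a single path. Thus neither blanket rule works, and your parenthetical ``symmetric'' alternative (raise $\pps$ highest-first) is the mirror of lowering $\ps$ lowest-first, not an interchangeable reformulation. The admissible order depends both on which path a move belongs to and on where the neighbouring paths sit, and the content of the lemma is precisely that an admissible move exists at each step; that selection argument is what your sketch leaves to ``case-by-case analysis'' while stating a rule that the cases refute.
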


\subsection{The path formula for $q$-characters of snake modules}
\label{sec:snakechar}
\begin{thm}[\cite{MY1}]
\label{snakechar} Let $(i_t,k_t)\in \It$, $1\leq t\leq \T$, be a snake of length $\T\in \Z_{\geq 1}$. Then
\be\chi_q\left(\L\left(\prod_{t=1}^\T Y_{i_t,k_t}\right)\right) = \sum_{\displaystyle \substack{(p_1,\dots,p_\T)\in \nops}} 
\prod_{t=1}^\T \mon(p_t).\ee
The module $\L(\prod_{t=1}^\T \YY {i_t}{k_t})$ is thin, special and anti-special.
\end{thm}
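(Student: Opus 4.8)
The plan is to identify the proposed set of monomials with the hypothesis set of Theorem~\ref{thmA}, and then to obtain anti-speciality by a dual argument. Put $m_+:=\prod_{t=1}^\T Y_{i_t,k_t}$ and $\mc M:=\{\prod_{t=1}^\T\mon(p_t):\ps\in\nops\}$. Each $\scr P_{i,k}$ is finite, so $\mc M$ is finite; moreover, by Lemma~\ref{movelemma} taken with $R=0$, the assignment $\ps\mapsto\prod_{t}\mon(p_t)$ is injective on $\nops$, so each monomial of $\sum_{m\in\mc M}m$ occurs with multiplicity $1$ --- this is where thinness will come from. I would then check the four hypotheses of Theorem~\ref{thmA}, taking $U$ to be the image of $\Iw$ in $I\times\Cx$, so that by the refinement of (\ref{imchiq}) recorded in \S\ref{sec:snakes} the operator $\trunc_{m_+\Q^-_U}$ acts as the identity on $\chi_q(\L(m_+))$.

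For \ref{incone}: every $p\in\scr P_{i,k}$ is obtained from the highest path $\phigh_{i,k}$ by a sequence of lowering moves (a basic feature of the path calculus of \cite{MY1}: by Lemma~\ref{movelemmaA} it is reached by an inverse-pair-free sequence of moves, and since $\phigh_{i,k}$ has no lower corner it admits no raising move, forcing every move in such a sequence to lower), whence $\mon(p)\in\mon(\phigh_{i,k})\Q^-_U=Y_{i,k}\Q^-_U$, and multiplying over $t$ gives $\mc M\subset m_+\Q^-_U$. For \ref{monlydom}: if $\prod_t\mon(p_t)$ were dominant, then the lower corner $(j,\ell)$ of the tuple with $\ell$ maximal, should one exist, would by Lemma~\ref{rneglem} be an upper corner of no path, forcing $u_{j,\ell}(\prod_t\mon(p_t))<0$; so no path has a lower corner, each $p_t=\phigh_{i_t,k_t}$, and $m_+$ is the only dominant monomial --- this is the origin of speciality. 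For \ref{onewayback}: this is a purely local statement about two consecutive lowering moves and the non-overlapping constraint, to be read off from the explicit catalogue of moves in Section~5 of \cite{MY1}.

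Hypothesis \ref{inthinsimple} is the rank-one reduction, and I expect it to be the main obstacle. Fixing $i\in I$, one studies the $A_{i,\cdot}$-strings in $\mc M$; since $\beta_i(A_{i,a})=Y_{i,aq^{r_i}}Y_{i,aq^{-r_i}}$ while $\beta_i$ kills all other $Y$-variables, the claim becomes that the $\beta_i$-image of such a string equals the $q$-character of the $\uqsl i$-module $\L(\beta_i(M))$, where $M$ is the unique $i$-dominant monomial of the string (existence and uniqueness of $M$ being the rank-one analogue of the argument for \ref{monlydom}). Every $\uqslp i$-module $\L$ of a dominant monomial is thin, with $q$-character the explicit $\mf{sl}_2$-type product of ``interval'' sums, so what remains is to verify that near node $i$ the portions of the $p_t$ that may be raised or lowered at $i$ behave as independent $\mf{sl}_2$-strings, and that the non-overlapping condition in the columns adjacent to $i$ reproduces the $\mf{sl}_2$ tensor-product rule. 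The substance is a classification of these local pictures in types A and B; the delicate case is node $N$ in type B, where paths carry the $\eps$-shifted endpoints and a single point $(N,\ell)$ can be at once an upper corner of $p_t$ and a lower corner of $p_{t+1}$ (\confer Lemma~\ref{rneglem}). Granting \ref{incone}--\ref{inthinsimple}, Theorem~\ref{thmA} yields $\chi_q(\L(m_+))=\sum_{\ps\in\nops}\prod_t\mon(p_t)$ together with the statements that $\L(m_+)$ is thin and special.

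Finally, anti-speciality. I would argue dually: the tuple $(\plow_{i_1,k_1},\dots,\plow_{i_\T,k_\T})$ of lowest paths lies in $\nops$ (the snake condition separates successive lowest paths just as it does successive highest paths), its monomial $\prod_t\mon(\plow_{i_t,k_t})$ is anti-dominant, and by the mirror of Lemma~\ref{rneglem} --- namely that the upper corner of minimal height of a non-overlapping tuple is a lower corner of no path --- it is the only anti-dominant monomial occurring in $\sum_{m\in\mc M}m$. Alternatively, one may invoke the already-proven speciality for the reflected snake module, using that $\chi_q$ intertwines the twisted dual with the monomial involution $Y_{i,a}\mapsto Y^{-1}_{i,aq^{-c}}$, which carries $m_+$ to the highest monomial of that snake --- the congruences mod $4$ cutting out $\It$ and the snake positions in type B being preserved.
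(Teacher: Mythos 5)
This theorem is stated with a citation to \cite{MY1} and is \emph{not} proved in the present paper, so there is no in-paper proof to compare against. That said, your strategy --- verify the hypotheses of Theorem \ref{thmA} with $m_+=\prod_t Y_{i_t,k_t}$, $\mc M = \{\prod_t\mon(p_t):\ps\in\nops\}$ and $U$ the image of $\Iw$ --- is exactly the intended route: the paper itself says Theorem \ref{thmA} ``is a generalized version of a similar result in \cite{MY1},'' and that similar result is precisely what \cite{MY1} uses to prove the path formula. Your treatment of (\ref{incone}), (\ref{monlydom}) via Lemma \ref{rneglem}, and of thinness via the $R=0$ instance of Lemma \ref{movelemma}, are all sound.

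The genuine gap is hypothesis (\ref{inthinsimple}). You correctly identify it as the rank-one reduction and correctly describe what it amounts to, but your ``proof'' of it is only a to-do list: existence and uniqueness of the $i$-dominant monomial $M$ in each $A_{i,\cdot}$-string of $\mc M$, the matching of the $\beta_i$-image of that string with $\trunc_{\beta_i(M\Q_U^-)}\chi_q(\L(\beta_i(M)))$, and the compatibility of non-overlapping in the columns adjacent to node $i$ with the $\mf{sl}_2$ combinatorics. None of this is actually carried out, and it is where essentially all the work in \cite{MY1} lives: the type-B spinor node (where an upper corner of $p_t$ and a lower corner of $p_{t+1}$ can coincide in column $2N-1$, and where the $\eps$-split endpoints enter) does \emph{not} behave like the A-type case and must be handled by an explicit case analysis, not waved at. Similarly, your first anti-speciality argument rests on a ``mirror'' of Lemma \ref{rneglem} that is nowhere stated and is not a trivial reflection --- the asymmetry in the exceptional B case you cite from Lemma \ref{rneglem} is genuine and needs a separate check --- and your second (duality) argument gestures at the correct intertwining property of $\chi_q$ without verifying that the reflected monomial is again a snake and that the truncation to $\Iw$ is respected. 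So: right skeleton, but the load-bearing verifications (particularly (\ref{inthinsimple}) at node $N$ in type B) are missing.
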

We can now prove the proposition on prime snake modules stated earlier: 
\begin{proof}[Proof of Proposition \ref{primesnakes}]
Let $(i_t,k_t)\in \It$, $1\leq t\leq \T$, be a snake of length $\T\in \Z_{\geq 0}$. If it is not a prime snake then there is an $s$, $1\leq s<\T$, such that $(i_{s+1},k_{s+1})$ is not in prime snake position to $(i_s,k_s)$. Theorem \ref{snakechar} and the injectivity of $\chi_q$ imply that 
$\L\left(\prod_{t=1}^\T Y_{i_t,k_t} \right) \cong \L\big(\prod_{t=1}^s Y_{i_t,k_t}\big)\otimes \L\big(\prod_{t=s+1}^\T Y_{i_t,k_t}\big)$. By recursion, any snake module is isomorphic to a tensor product of snake modules whose snakes are prime.

Now consider the case that the given snake is prime.  Let $U \sqcup U'=\{ (i_t,k_t) : 1\leq t\leq \T\}$ be any set partition with $U$ and $U'$ non-empty and, without loss of generality, $(i_1,k_1)\in U$. We shall now show that 
\be\label{wrng}\L\left(\prod_{t=1}^\T Y_{i_t,k_t}\right)\not\cong \L\left(\prod_{(i,k)\in U}Y_{i,k}\right)\otimes\L\left(\prod_{(i,k)\in U'}Y_{i,k}\right).\ee 
The points of $U$ may not form a snake but $\chi_q\left(\L\left(\prod_{(i,k)\in U}Y_{i,k}\right)\right)$ always includes the monomial  $\prod_{(i,k)\in U}\mon(\plow_{i,k})$: see e.g. \cite{CHbeyondKR} Theorem 3.
Therefore, the $q$-character of the right-hand side contains the monomial 
\be\label{mfdef} m:=\prod_{(i,k)\in U}\mon(\plow_{i,k}) \prod_{(i,k)\in U'} \mon(\phigh_{i,k}).\ee  
We claim that the $q$-character of the left-hand side of (\ref{wrng}) does not contain $m$. Indeed, suppose for a contradiction that $\ps \in \nops$ is such that $m=\prod_{t=1}^\T \mon(p_t)$. Note that $m$ has at most $\T$ factors $Y^{\pm 1}_{j,\ell}$. 

Consider type A. There is no cancellation between $\mon(p_t)$, $\mon(p_s)$ for any pair $t\neq s$. So each path must have at most one corner, or else $m$ would have too many factors. Therefore each path is either highest or lowest. Then (\ref{mfdef}) can only hold if for all $t\in U$, $p_t=\plow_{i_t,k_t}$ and for all $t\in U'$, $p_t=\phigh_{i_t,k_t}$. But there is an $s$ such that $(i_{s+1},k_{s+1})\in U'$ and $(i_s,k_s)\in U$, so, by definition of prime snake position, $\phigh_{i_{s+1},k_{s+1}}$ overlaps $\plow_{i_s,k_s}$: a contradiction.

Consider type B. Cancellations between $\mon(p_t)$, $\mon(p_s)$, $t\neq s$, can occur only in column $N$.
Every path $p_t$ has at least one non-cancelling corner. So, by counting, every path $p_t$ must have exactly one non-cancelled corner. Hence, if $i_t\neq N$ then $p_t$ must be highest or lowest. So for all $t\in U$ such that $i_t\neq N$, $p_t=\plow_{i_t,k_t}$ and for all $t\in U'$ such that $i_t\neq N$, $p_t=\phigh_{i_t,k_t}$. Dividing (\ref{mfdef}) by these factors, we have
\be\nn \prod_{t: i_t= N} \mon(p_t) = \prod_{t\in U: i_t =N} \mon(\plow_{N,k_t}) \prod_{t\in U': i_t=N} \mon(\phigh_{N,k_t}).\ee
But now if, for some $t$ such that $i_t=N$, $p_t$ is not highest or lowest then it has a corner not in column $N$, which produces some factor $Y_{j,\ell}^{\pm 1}$ that is neither cancelled nor present on the right-hand side of this equation. Therefore for all $t$, $p_t$ must be highest or lowest. So for all $t\in U$, $p_t=\plow_{i_t,k_t}$ and for all $t\in U'$, $p_t=\phigh_{i_t,k_t}$. But, as in type A, these paths are overlapping: a contradiction.
\end{proof}

\section{Proof of Theorem \ref{Tsys}}\label{sec:proof}
Our strategy of proof is influenced by \cite{HernandezKR}. We show that the dominant monomials on the left- and right-hand sides of (\ref{grl}) coincide. We shall see in particular that (\ref{nnsimp}) is special, and therefore simple. Finally, to show that (\ref{tbsimp}) is simple we show that for each of its dominant monomials $m$  other than the highest, the $q$-character of $\L(m)$ contains a monomial which is not present in the $q$-character of (\ref{tbsimp}).

\subsection{Classification of dominant monomials}
\begin{lem}\label{sll} Let $(i_t,k_t)\in \It$, $1\leq t \leq \T$, be a snake of length $\T\in \Z_{\geq 2}$.
Let $m' =\prod_{t=1}^{\T-1}\mon(p'_t)$ be a monomial of $\chi_q(\L( \prod_{t=1}^{\T-1} \YY{i_t}{k_t}))$, where $(p'_1,\dots,p'_{\T-1})\in \overline{\scr P}_{(i_t,k_t)_{1\leq t\leq \T-1}}$ are a tuple of non-overlapping paths as in Theorem \ref{snakechar}. Likewise, let $m=\prod_{t=2}^\T \mon(p_t)$ be a monomial of $\chi_q(\L (\prod_{t=2}^\T \YY{i_t}{k_t}))$, where $(p_2,\dots,p_{\T})\in \overline{\scr P}_{(i_t,k_t)_{2\leq t\leq \T}}$. 
Suppose $m'm$ is dominant. Then, $p_{t}=\phigh_{i_t,k_t}$ for all $2\leq t\leq \T$ and there exists and $R$, $1\leq R\leq \T$, such that $p'_{t} = \phigh_{i_t,k_t}$ for all $1\leq t<R$ and $p'_{t}= \psnake_{i_t,k_t;i_{t+1},k_{t+1}}$ for all $R\leq t< \T$.
\end{lem}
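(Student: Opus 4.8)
The plan is to analyze the dominant monomial condition column by column, using the fact that in types A and B the only cancellations between $\mon(p_s)$ and $\mon(p_t)$ for $s \neq t$ can occur in a single column (no column in type A, column $N$ in type B). First I would set up the picture: the product $m'm = \prod_{t=1}^{\T-1}\mon(p'_t)\cdot\prod_{t=2}^\T\mon(p_t)$ involves two tuples of non-overlapping paths, and because both tuples use the same snake positions $(i_t,k_t)$ for the overlapping range $2 \leq t \leq \T-1$, I can think of $p'_t$ and $p_t$ as living in the same $\scr P_{i_t,k_t}$. The strategy is to show that any lower corner appearing anywhere among the $p'_t$ (for $t<\T$, outside column $N$) or the $p_t$ (for $t\geq 2$) must be cancelled by an upper corner of a neighbouring path, and then to trace the forced structure.

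The key steps, in order: (1) Consider the rightmost lower corner $(j,\ell)$ among all paths $p'_1,\dots,p'_{\T-1},p_2,\dots,p_\T$ (if any exists and is not in column $N$ in type B). By Lemma \ref{rneglem}, it cannot be cancelled by an upper corner of any path in the same non-overlapping tuple, so for $m'm$ to be dominant it must be cancelled across the two tuples — which forces $(j,\ell)$ to be an upper corner of some path in the other tuple. (2) Combinatorially, the only way an upper corner of one tuple can sit at the location of a lower corner of the other is at the ``join'' between consecutive snake points; using the prime snake position constraints and the explicit corner descriptions, I would argue this pins down which paths carry these corners. (3) This should force $p_t = \phigh_{i_t,k_t}$ for all $t \geq 2$: any lower corner of $p_t$ (for $t \geq 2$) would need a cancelling upper corner, but the $p'$-tuple's upper corners are either at $\phigh$ positions or at snake-lowered positions, and I would check these cannot match. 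Once all $p_t$ are highest, $m = \prod_{t=2}^\T\mon(\phigh_{i_t,k_t})$ has no lower corners at all, so $m'$ alone must already be ``dominant enough'': every lower corner of the $p'_t$'s must cancel against an upper corner of $\phigh_{i_{t+1},k_{t+1}}$ sitting one position over. (4) Invoke Lemma \ref{bl1} repeatedly, or an induction on $t$ from $\T-1$ downward: if $p'_{\T-1}$ has a lower corner it must be at $(i_\T,k_\T)$ and then $p'_{\T-1} = \psnake_{i_{\T-1},k_{\T-1};i_\T,k_\T}$; propagating, each $p'_t$ for $t \geq R$ is the snake-lowered path and each $p'_t$ for $t < R$ is highest, where $R$ is the smallest index at which a snake-lowering occurs. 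The non-overlapping condition plus the constraint that consecutive snake-lowered paths fit together (their corner patterns are compatible exactly because successive points are in snake position) gives the claimed ``staircase'' form.

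I expect the main obstacle to be step (2)–(3): carefully ruling out all the ways an upper corner of a path in one tuple could coincide with a lower corner of a path in the other tuple \emph{other than} the expected snake-join configuration, especially near the $B$-type end of the Dynkin diagram where corners at $(N,\ell)$ behave differently and where neighbouring points can be ``missing.'' This is the kind of argument that is conceptually clear from Figure \ref{snakefig} but requires a somewhat tedious case analysis over the relative positions $(i,i',k'-k)$ permitted by prime snake position, combined with the explicit corner formulas from \S\ref{sec:paths}. I would organize it by first disposing of the generic case (all $i_t$ bounded away from the ends, where the rectangle picture applies) and then handling the boundary cases by direct inspection, appealing to the already-established Lemmas \ref{rneglem}, \ref{bl1} and \ref{movelemma} to keep the bookkeeping under control. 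The ``anti-special'' half of Theorem \ref{snakechar} — that each $p_t$ having no upper corner forces it to be lowest, dually — provides the symmetric tool that makes the forcing of $p_t = \phigh_{i_t,k_t}$ clean once the cancellation analysis is done.
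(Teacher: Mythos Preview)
Your proposal has the right ingredients—the rightmost-lower-corner heuristic, Lemma~\ref{rneglem}, Lemma~\ref{bl1}, and the recognition that the type-$B$ column-$N$ case needs separate treatment—and in broad outline matches the paper. The problem is the decoupling you attempt in steps (2)--(3). The claim in step (2), that an upper corner of one tuple can coincide with a lower corner of the other only at a ``join'' $(i_{t+1},k_{t+1})$, is false for arbitrary paths: a path $p'_s\in\scr P_{i_s,k_s}$ that is neither highest nor snake-lowered can have upper corners at many locations, some of which may well coincide with lower corners of an as-yet-unconstrained $p_t$. Consequently your step (3), which relies on knowing that ``the $p'$-tuple's upper corners are either at $\phigh$ positions or at snake-lowered positions,'' presupposes the conclusion of step (4). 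The anti-special property you invoke at the end does not help here: it concerns the unique lowest tuple, not an arbitrary one.

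The paper avoids this by running a single \emph{coupled} downward induction: at stage $t$ it determines simultaneously that $p_{\T-t}=\phigh$ and $p'_{\T-t}\in\{\phigh,\psnake\}$. The key reduction is not to the global product but to the local monomial $n:=\mon(p_{\T-t})\,Y_{i_{\T-t+1},k_{\T-t+1}}\,\mon(p'_{\T-t})$: if $n$ had a rightmost uncancelled $Y^{-1}_{j,\ell}$, one traces it through to an uncancelled $Y^{-1}$ in $m'm$, using the inductive hypothesis on the already-determined later paths (highest for $p$, highest-or-snake-lowered for $p'$) to block every possible cross-tuple cancellation. This tracing is exactly where the type-$B$ exceptional case is handled. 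Dominance of $n$ then feeds directly into Lemma~\ref{bl1} applied to the pair $(p_{\T-t},p'_{\T-t})\in\scr P_{i_{\T-t},k_{\T-t}}^{2}$; the non-overlapping of $p_{\T-t}$ with the already-highest $p_{\T-t+1}$ is what excludes $p_{\T-t}=\psnake$. Your step (1) is essentially the paper's base case $t=0$; the fix is to interleave your steps (3) and (4) into this simultaneous induction rather than attempt them sequentially.
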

\begin{proof}
We shall show by induction on $t$ that, for all $t\in \{0,1,\dots,\T-1\}$:
$p_{\T-t} = \phigh_{i_{\T-t},k_{\T-t}}$ if $t< \T-1$;
$p'_{\T-t}\in \{\phigh_{i_{\T-t},k_{\T-t}}, \psnake_{i_{\T-t},k_{\T-t};i_{\T-t+1},k_{\T-t+1}}\}$ if $t> 0$;
and moreover if $p'_{\T-t}=\phigh_{i_{\T-t},k_{\T-t}}$ then $p'_{\T-t-1} = \phigh_{i_{\T-t-1},k_{\T-t-1}}$, if $\T-1>t>0$.
For the case $t=0$, if $p_\T\neq \phigh_{i_\T,k_\T}$ then $m'm$ is right-negative and hence not dominant. Now assume the statement is true for $t-1$. Consider the monomial $n := \mon(p_{\T-t})Y_{i_{\T-t+1},k_{\T-t+1}}\mon( p'_{\T-t})$. By virtue of Lemma \ref{bl1}, it is sufficient to show that $n$ is dominant.

Suppose for a contradiction that $n$ is not dominant. Let $(j,\ell)\in \It$ be such that $u_{j,\ell}(n)<0$ and for all $\ell'>\ell$ and all $j\in I$, $u_{j',\ell'}(n) \geq 0$. That is, $Y_{j,\ell}^{-1}$ is the right-most uncancelled factor $Y^{-1}$ in $n$. 
If we are in type B and $j=N$ and either $p_{\T-t-1}$ or $p'_{\T-t-1}$ has an upper corner at $(N,\ell)$, we call this the exceptional case. 

Suppose we are not in the exceptional case. Then $(j,\ell)$ cannot be an upper corner of any path $p_{s}$ or $p'_{s}$ with $s< \T-t$. So $Y^{-1}_{j,\ell}$ is not cancelled by any of these paths. 
Next, $Y_{j,\ell}^{-1}$ is not cancelled in $\mon(p_s)$, $s>\T-t+1$, because if $p'_{s-1}=\psnake_{i_{s-1},k_{s-1};i_{s},k_{s}}$ then $\mon(p_s)=Y_{i_s,k_s}$ is already cancelled in $\mon(p'_{s-1})$, and if not then $p_s'=\phigh_{i_s,k_s} = p_{s}$ and $Y_{j,\ell}^{-1}$ still cannot be cancelled since $(j,\ell)$ is a corner of $p_{\T-t}$ or $p_{\T-t}'$, both strictly above $\phigh_{i_s,k_s}$. Finally $Y_{j,\ell}^{-1}$ cannot be cancelled in $\mon(p'_s)$, $s>\T-t$: if $(j,\ell)$ is a lower corner of $p'_{\T-t}$ then this is immediate from the non-overlapping property, and if $(j,\ell)$ is a lower corner of $p_{\T-t}$ then we use the fact that $p_{\T-t}$ is strictly above $p_{\T-t+1}=\phigh_{i_{\T-t+1},k_{\T-t+1}}$ and therefore also strictly above $p'_{\T-t+1}$. Thus $m'm$ has a factor $Y_{j,\ell}^{-1}$: a contradiction since $m'm$ is dominant. 

Now suppose we are in the exceptional case. Necessarily, $i_{\T-t}=i_{\T-t-1}=N$ and $k_{\T-t}-k_{\T-t-1}\equiv 2\!\! \mod 4$. 
Whichever path has an upper corner at $(j,\ell)$ (either $p_{\T-t-1}$ or $p'_{\T-t-1}$, or both) also has a lower corner at some $(j',\ell')\in \It$, $\ell'>\ell$. If there is more than one such lower corner, we choose the one for which $j'$ is maximal, i.e. the one closest to the spinor node. Neither  $p_{\T-t-1}$ nor $p'_{\T-t-1}$ can have an upper corner at $(j',\ell')$. Nor can $p_{\T-t}$ or $p'_{\T-t}$, by the condition on $k_{\T-t}-k_{\T-t-1}$. By the non-overlapping property, no path $p_s$ or $p_s'$, $s<\T-t-1$, has an upper corner at $(j',\ell')$. 
No path $p'_s$, $s>\T-t$, has an upper corner at $(j',\ell')$: if $(j',\ell')$ is a corner of $p'_{\T-t-1}$ then this is immediate from the non-overlapping property, and if $(j',\ell')$ is a corner of $p_{\T-t-1}$ then we use the fact that $p_{\T-t-1}$ is strictly above $p_{\T-t+1}=\phigh_{i_{\T-t+1},k_{\T-t+1}}$ and therefore also strictly above $p'_{\T-t+1}$. Next, the factor $Y_{j',\ell'}^{-1}$ is not cancelled in $\mon(p_s)$, $s>\T-t+1$, because if $p'_{s-1}=\psnake_{i_{s-1},k_{s-1};i_s,k_s}$ then $\mon(p_s) = Y_{i_s,k_s}$ is already cancelled and if not then $p'_{s-1}=\phigh_{i_{s-1},k_{s-1}}= p_{s-1}$, both strictly below $p_{\T-t-1}$ and $p'_{\T-t-1}$.  
It remains to check that $Y_{j',\ell'}$ cannot be cancelled by $\mon(p_{\T-t+1})=\mon(\phigh_{i_{\T-t+1},k_{\T-t+1}}) =Y_{i_{\T-t+1},k_{\T-t+1}}$, as follows. First note $\ell'=k_{\T-t-1}+2(N-j')-1$. But $(i_{\T-t+1},k_{\T-t+1})$ is in snake position to $(i_{\T-t},k_{\T-t})=(N,k_{\T-t})$, so $k_{\T-t+1} \equiv k_{\T-t} + 2(N-i_{\T-t+1})-1 \!\!\mod 4$ and therefore $k_{\T-t+1} \equiv k_{\T-t-1} + 2(N-i_{\T-t+1})+1\!\! \mod 4$ since $k_{\T-t}-k_{\T-t-1}\equiv 2\!\!\mod 4$. Hence $(j',\ell')\neq (i_{\T-t+1}, k_{\T-t+1})$. Thus $m'm$ has a factor $Y_{j',\ell'}^{-1}$: again, a contradiction since $m'm$ is dominant. 

Finally, that the fact that if $p'_{\T-t}=\phigh_{i_{\T-t},k_{\T-t}}$ then $p'_{\T-t-1} = \phigh_{i_{\T-t-1},k_{\T-t-1}}$ is clear from the non-overlapping property.
\end{proof}

\begin{prop}\label{dommons} 
Let $(i_t,k_t)\in \It$, $1\leq t \leq \T$, be a snake of length $\T\in \Z_{\geq 2}$. Let \be\scr U_{S} = \left\{ \prod_{t=1}^{R-1} m(\phigh_{i_t,k_t}) \prod_{t=R}^{\T-1} m(\psnake_{i_t,k_t;i_{t+1},k_{t+1}}): 1\leq R \leq \T \right\}.\ee
Then \begin{enumerate}[(i)]
\item\label{LxR} $\scr U_S \prod_{t=2}^\T\YY{i_t}{k_t}$ is the set of dominant monomials of $\chi_q(\L (\prod_{t=1}^{\T-1} \YY{i_t}{k_t} )\otimes \L(\prod_{t=2}^\T\YY{i_t}{k_t}))$.
\item\label{TxB} $\scr U_S \prod_{t=2}^\T \YY{i_t}{k_t} \setminus \left\{\prod_{t=1}^{\T-1} \mon(\psnake_{i_t,k_t;i_{t+1},k_{t+1}})\prod_{t=2}^\T \YY{i_t}{k_t} \right\}  $ is the set of dominant monomials of $\chi_q(\L ( \prod_{t=2}^{\T-1} \YY{i_t}{k_t})\otimes \L(\prod_{t=1}^\T \YY{i_t}{k_t} ) )$.
\end{enumerate} 
All of these dominant monomials occur with multiplicity one.
\end{prop}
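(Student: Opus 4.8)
The plan is to reduce each statement to a question about which products of monomials coming from the path formula of Theorem~\ref{snakechar} are dominant, and then to invoke Lemma~\ref{sll}. Write
\[ \mu_R := \prod_{t=1}^{R-1}\mon(\phigh_{i_t,k_t})\,\prod_{t=R}^{\T-1}\mon(\psnake_{i_t,k_t;i_{t+1},k_{t+1}})\cdot\prod_{t=2}^{\T}Y_{i_t,k_t},\qquad 1\leq R\leq\T, \]
so that $\scr U_S\prod_{t=2}^{\T}Y_{i_t,k_t}=\{\mu_1,\dots,\mu_\T\}$. Since $\mon(\phigh_{i_t,k_t})=Y_{i_t,k_t}$ while $\mon(\psnake_{i_t,k_t;i_{t+1},k_{t+1}})$ has its unique lower corner at $(i_{t+1},k_{t+1})$ and upper corners exactly the neighbouring points $\nbri_{i_t,k_t}^{i_{t+1},k_{t+1}}\sqcup\nbrii_{i_t,k_t}^{i_{t+1},k_{t+1}}$ (\S\ref{snakelowereddef}), a one-line cancellation gives
\[ \mu_R=\prod_{t=1}^{R-1}Y_{i_t,k_t}\cdot\prod_{t=2}^{R}Y_{i_t,k_t}\cdot\prod_{t=R}^{\T-1}\ \prod_{(j,\ell)\in\nbri_{i_t,k_t}^{i_{t+1},k_{t+1}}\sqcup\nbrii_{i_t,k_t}^{i_{t+1},k_{t+1}}}Y_{j,\ell}; \]
in particular every $\mu_R$ is a product of positive powers of the $Y_{j,\ell}$, hence dominant, $\mu_\T$ is the highest monomial, and $\mu_1=\prod_{(j,\ell)\in\nbri\sqcup\nbrii}Y_{j,\ell}$. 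Moreover the $\mu_R$ are pairwise distinct, since for $R<R'$ one has $\mu_R\mu_{R'}^{-1}=\prod_{t=R}^{R'-1}\mon(\psnake_{i_t,k_t;i_{t+1},k_{t+1}})Y_{i_t,k_t}^{-1}\in\Q^-\setminus\{1\}$.

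For (i), the inclusion ``$\subseteq$'' is exactly Lemma~\ref{sll}: every dominant monomial of $\chi_q(\L(\prod_{t=1}^{\T-1}Y_{i_t,k_t}))\,\chi_q(\L(\prod_{t=2}^{\T}Y_{i_t,k_t}))$ is of the stated form $\mu_R$. For ``$\supseteq$'', fix $R$; the all-highest tuple $(\phigh_{i_2,k_2},\dots,\phigh_{i_\T,k_\T})$ lies in $\overline{\scr P}_{(i_t,k_t)_{2\leq t\leq\T}}$ (it realises the highest monomial), and the tuple $(p'_1,\dots,p'_{\T-1})$ with $p'_t=\phigh_{i_t,k_t}$ for $t<R$ and $p'_t=\psnake_{i_t,k_t;i_{t+1},k_{t+1}}$ for $R\leq t\leq\T-1$ lies in $\overline{\scr P}_{(i_t,k_t)_{1\leq t\leq\T-1}}$: indeed $\psnake_{i_t,k_t;i_{t+1},k_{t+1}}$ is weakly below $\phigh_{i_t,k_t}$, and two successive snake-lowered paths do not overlap, which one checks column by column from the (prime) snake-position inequalities, equivalently from the corner description in \S\ref{snakelowereddef} together with Lemma~\ref{bl1}. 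Hence $\mu_R$ appears in the $q$-character of the tensor product. For the multiplicity claim: $\chi_q$ is multiplicative, and by Theorem~\ref{snakechar} both factors are thin, so the coefficient of $\mu_R$ equals the number of factorisations $\mu_R=m'm$ with $m'\in\mchiq(\L(\prod_{t=1}^{\T-1}Y_{i_t,k_t}))$ and $m\in\mchiq(\L(\prod_{t=2}^{\T}Y_{i_t,k_t}))$; Lemma~\ref{sll} forces $m=\prod_{t=2}^{\T}Y_{i_t,k_t}$, so $m'=\mu_R\cdot(\prod_{t=2}^{\T}Y_{i_t,k_t})^{-1}$ is uniquely determined, and the coefficient is $1$.

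For (ii), the argument is the same in spirit, the only difference being that the first factor $\L(\prod_{t=2}^{\T-1}Y_{i_t,k_t})$ now carries paths at positions $2,\dots,\T-1$ only. Running the inductive argument of Lemma~\ref{sll} with this change -- right-negativity at position $\T$, Lemma~\ref{bl1} together with the non-overlapping property at the remaining positions, and noting that there is simply no path at position $1$ in the first factor -- shows that a dominant monomial of $\chi_q(\L(\prod_{t=2}^{\T-1}Y_{i_t,k_t}))\,\chi_q(\L(\prod_{t=1}^{\T}Y_{i_t,k_t}))$ must have every path of the second factor equal to the corresponding highest path (so this factor contributes $\prod_{t=1}^{\T}Y_{i_t,k_t}$) and the paths of the first factor equal to $\phigh_{i_t,k_t}$ for $2\leq t<R$ and $\psnake_{i_t,k_t;i_{t+1},k_{t+1}}$ for $R\leq t\leq\T-1$ for some $2\leq R\leq\T$; these products are exactly $\mu_2,\dots,\mu_\T$. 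The reverse inclusion and multiplicity-one follow as in (i). Thus the dominant monomials form $\{\mu_2,\dots,\mu_\T\}=\scr U_S\prod_{t=2}^{\T}Y_{i_t,k_t}\setminus\{\mu_1\}$, where $\mu_1=\prod_{t=1}^{\T-1}\mon(\psnake_{i_t,k_t;i_{t+1},k_{t+1}})\prod_{t=2}^{\T}Y_{i_t,k_t}$: its absence is forced, since producing it would require a snake-lowered path $\psnake_{i_1,k_1;i_2,k_2}$ at position $1$, which the middle sub-snake does not have.

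I expect the main obstacle to be the bookkeeping in adapting the proof of Lemma~\ref{sll} to the setting of (ii), together with the verification that the mixed tuples $(\phigh,\dots,\phigh,\psnake,\dots,\psnake)$ are genuinely non-overlapping; both are case-by-case checks in the style of \cite{MY1}, and the delicate point is the possible coincidence of corners in column $N$ in type~B, the phenomenon already isolated by Lemma~\ref{rneglem} and handled in the ``exceptional case'' of the proof of Lemma~\ref{sll}.
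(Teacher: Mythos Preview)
Your proposal is correct and takes essentially the same approach as the paper: the paper's own proof is the one-liner ``The first part follows immediately from Lemma~\ref{sll}, and the second by similar reasoning'', and you have spelled out exactly the details the paper leaves implicit (the reverse inclusion via explicit non-overlapping tuples, multiplicity one via thinness, and the adaptation of the induction to the $B\otimes T$ factorisation). One minor quibble: your appeal to Lemma~\ref{bl1} for the non-overlapping of \emph{successive} snake-lowered paths is misplaced, since that lemma compares two paths in the same $\scr P_{i,k}$; the actual check that $\psnake_{i_{t-1},k_{t-1};i_t,k_t}$ is strictly above $\psnake_{i_t,k_t;i_{t+1},k_{t+1}}$ is the column-by-column computation you allude to.
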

\begin{proof}
The first part follows immediately from Lemma \ref{sll}, and the second by similar reasoning.
\end{proof}

\begin{prop}\label{MNsimpspec}
The module $\L(\prod_{(i,k)\in\nbri}Y_{i,k}) \otimes \L(\prod_{(i,k)\in\nbrii}Y_{i,k})$ is simple.  Moreover, it is special. 
\end{prop}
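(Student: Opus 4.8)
The plan is to show first that $\chi_q(N_1\otimes N_2)$ has a unique dominant monomial, namely its highest one, and then to read off both simplicity and the isomorphism (\ref{nnsimp}). Write $N_1:=\L(\prod_{(i,k)\in\nbri}Y_{i,k})$, $N_2:=\L(\prod_{(i,k)\in\nbrii}Y_{i,k})$, $m_1:=\prod_{(i,k)\in\nbri}Y_{i,k}$, $m_2:=\prod_{(i,k)\in\nbrii}Y_{i,k}$ and $m^\star:=m_1m_2$. Since $m_1,m_2$ are dominant, so is $m^\star$, and the tensor product $v_1\otimes v_2$ of highest $l$-weight vectors is a highest $l$-weight vector of weight $m^\star$; hence $\L(m^\star)$ is a composition factor of $N_1\otimes N_2$. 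Moreover, using (\ref{imchiq}) we have $\mchiq(N_t)\subset m_t\,\Q^-$, so $m^\star$ can be written as a product of a monomial of $\chi_q(N_1)$ and one of $\chi_q(N_2)$ only as $m_1\cdot m_2$ (since $\Q^+\cap\Q^-=\{1\}$); therefore $m^\star$ occurs in $\chi_q(N_1\otimes N_2)=\chi_q(N_1)\chi_q(N_2)$ with multiplicity exactly one. Granting the uniqueness claim, expanding $[N_1\otimes N_2]$ as the sum of the classes of its composition factors forces every factor to be $\L(m^\star)$, and multiplicity one forces there to be just one of them; so $N_1\otimes N_2\cong\L(m^\star)$, which is in particular special and simple.

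To establish the uniqueness claim I would argue by paths, in the style of Lemma \ref{sll}. By Proposition \ref{nbrsprop}, $\nbri$ and $\nbrii$ are snakes with no points in common (we may assume both are nonempty, the statement being immediate in that case); so by Theorem \ref{snakechar} each of $N_1$, $N_2$ is thin and special with $q$-character given by the path formula, whence
\be\nn\chi_q(N_1\otimes N_2)=\sum\ \prod_{(i,k)\in\nbri}\mon(q_{i,k})\cdot\prod_{(i,k)\in\nbrii}\mon(q'_{i,k}),\ee
the sum running over pairs consisting of a non-overlapping tuple $(q_{i,k})_{(i,k)\in\nbri}$ of paths in the sets $\scr P_{i,k}$, $(i,k)\in\nbri$, and a non-overlapping tuple $(q'_{i,k})_{(i,k)\in\nbrii}$. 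From the description of the snake-lowered path in \S\ref{snakelowereddef} one has $m^\star=\prod_{t=1}^{\T-1}\mon(\psnake_{i_t,k_t;i_{t+1},k_{t+1}})\cdot\prod_{t=2}^\T Y_{i_t,k_t}$, and this is precisely the product over $\nbri$ and over $\nbrii$ of the highest paths; so $m^\star$ is the monomial coming from the tuple of all highest paths. It thus remains to show that any dominant product of the displayed form forces every $q_{i,k}$ and every $q'_{i,k}$ to be highest.

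For this I would mimic the argument of Lemma \ref{sll}: assuming not all paths are highest, the set of lower corners of the paths $q_\bullet,q'_\bullet$ is nonempty; pick $(j,\ell)$ in it with $\ell$ maximal, and in type B, among those with this $\ell$, the one whose $\It$-index is closest to the spinor node. Outside the exceptional type B situation in which $j=N$ and a path over $\nbri$ or $\nbrii$ has an upper corner at $(N,\ell)$, one shows that the factor $Y_{j,\ell}^{-1}$ carried by this corner is uncancelled, using the non-overlapping property within $\nbri$ and within $\nbrii$ together with the relative position of the two neighbour snakes — which, by the explicit formulae of \S\ref{sec:nbrdef}, is exactly such that an upper corner of a path over one of $\nbri$, $\nbrii$ never lies at a forced lower corner of a path over the other. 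In the exceptional case one passes to the next lower corner of the offending path, nearer the spinor node, and repeats; as in Lemma \ref{sll}, the congruence conditions on the $k_t$ encoded in ``snake position'' for the snake $\snk$ ensure that the resulting factor $Y_{j',\ell'}^{-1}$ survives. Either way the product is not dominant, a contradiction.

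The main obstacle is this last step: verifying, case by case from the definitions in \S\ref{sec:nbrdef}, that the two neighbour snakes are positioned ``compatibly'' in the precise sense the no-cancellation argument needs, and redoing the delicate two-step spinor-node analysis of Lemma \ref{sll} in this setting. (Note that, once $N_1\otimes N_2\cong\L(m^\star)$ is known, this also identifies the right-hand neighbour term of (\ref{grl}) with a single simple module, which is exactly the content of (\ref{nnsimp}).)
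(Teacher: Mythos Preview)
Your approach is essentially the paper's: reduce to specialness, invoke the path formula for $N_1$ and $N_2$, pick a lower corner $(j,\ell)$ with maximal $\ell$ among all paths, and argue that $Y_{j,\ell}^{-1}$ survives. The paper's proof does exactly this.

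Two points where you diverge from the paper and should adjust. First, the paper makes the following reduction explicit, and it is what actually organises the argument: if $(j,\ell)$ is a lower corner of some path over $\nbri$ with $\ell$ maximal, then any path $q$ over $\nbrii$ with an upper corner at $(j,\ell)$ must be the highest path $\phigh$ (otherwise the lower corner of $q$ adjacent to that upper corner sits at some $\ell'>\ell$, contradicting maximality). Hence it suffices to show that $(j,\ell)\notin\nbrii$. This converts the problem into a purely static, geometric question about where the points of $\nbrii$ can lie relative to $\phigh_{i,k}$ and $\plow_{i,k}$ for $(i,k)\in\nbri$, and the paper settles it by the case split: weakly above $\phigh_{i,k}$; strictly below $\plow_{i,k}$; ``shielded'' by a later point of $\nbri$; and (type B only) one stray spinor point ruled out by a $\ell'\bmod 4$ congruence.

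Second, your ``exceptional case'' is imported from Lemma \ref{sll}, but that lemma concerns the left/right product, where both tuples live over the \emph{same} snake $\snk$ and the two-step spinor argument exploits that structure. Here the two tuples live over the disjoint neighbour snakes $\nbri$, $\nbrii$, and the paper does \emph{not} pass to a next lower corner; instead the residual type B case is a single spinor point of $\nbrii$ that fails the three geometric conditions, and a parity check shows no path in $\scr P_{i,k}$ can have a lower corner there. So your sketch would go through, but the type B endgame needs to be rewritten along these lines rather than by analogy with Lemma \ref{sll}.
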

\begin{proof}
Special implies simple, so it is enough to show that $\L(\prod_{(i,k)\in\nbri}Y_{i,k}) \otimes \L(\prod_{(i,k)\in\nbrii}Y_{i,k})$ is special. For brevity let us write $\nbri_t := \nbri_{i_t,k_t}^{i_{t+1},k_{t+1}}$ and $\nbrii_t := \nbrii_{i_t,k_t}^{i_{t+1},k_{t+1}}$.
Proposition \ref{nbrsprop} states that $\nbri:=\nbri_1\concat\nbri_2\concat\dots\concat\nbri_{\T-1}$ and $\nbrii:=\nbrii_1\concat\nbrii_2\concat\dots\concat\nbrii_{\T-1}$ are snakes with no elements in common. 

Let $n$ be a monomial in  $\chi_q(\L(\prod_{(i,k)\in\nbri}Y_{i,k}))$ and $m$ a monomial in $\chi_q(\L(\prod_{(i,k)\in\nbrii}Y_{i,k}))$. By Theorem \ref{snakechar} we have 
\be n = \prod_{t=1}^{\T-1} \prod_{(i,k)\in\nbri_t} \mon(p_{(t;i,k)}),\quad\quad
    m = \prod_{t=1}^{\T-1} \prod_{(i,k)\in\nbrii_t} \mon(p_{(t;i,k)})\ee
for some paths $p_{(t;i,k)}$, with $p_{(t;i,k)}\in \scr P_{i,k}$ for each $1\leq t<\T$ and each $(i,k)\in \nbri_t \sqcup \nbrii_t$.

We would like to show that if $nm$ is not highest then it is not dominant. So suppose $nm$ is not highest, i.e. that at least one of the paths $p_{(t;i,k)}$, $1\leq t\leq \T-1$, $(i,k)\in \nbri_t\sqcup \nbrii_t$ is not highest. Then there exists a $(j,\ell)\in \It$ such that $(j,\ell)$ is a lower corner of at least one of the paths and none of the paths has a lower corner at any point $(j',\ell')\in \It$ with $\ell'>\ell$. 

Let us suppose for definiteness that $(j,\ell)$ is a lower corner of a path $p_{(t;i,k)}$ with $(i,k)\in \nbri_t$. If instead  $(i,k)\in \nbrii_t$ the argument is similar. 

By choice of $(j,\ell)$ and Lemma \ref{rneglem}, no path $p_{(t';i',k')}$, $(i',k')\in \nbri_{t'}$, has an upper corner at $(j,\ell)$. We shall now argue that no path $p_{(t';i',k')}$, $(i',k')\in \nbrii_{t'}$ has an upper corner at $(j,\ell)$ either. By choice of $(j,\ell)$, if $p_{(t';i',k')}$ has an upper corner at $(j,\ell)$ then $p_{(t';i',k')}=\phigh_{i',k'}$. Thus it is enough to show that $(j,\ell)\notin \nbrii$. 

Suppose $(j',\ell')\in \nbrii$ is weakly above $\phigh_{i,k}$. Then no path in $\scr P_{i,k}$ has a lower corner at $(j',\ell')$, so $(j',\ell')\neq (j,\ell)$.

Suppose $(j',\ell')\in \nbrii$ is strictly below $\plow_{i,k}$. Then clearly $(j',\ell')\neq (j,\ell)$.

Suppose $(j',\ell') \in \nbrii$ is weakly below $\phigh_{i',k'}$ for some point $(i',k')\in \nbri_{t'}$ that succeeds $(i,k)$ in the snake $\nbri$. Let us say such a point $(j',\ell')$ is \emph{shielded}. In that case, if $p_{(t;i,k)}$ has a lower corner at $(j',\ell')$ then $p_{(t';i',k')}$ must have a lower corner at some $(j'',\ell'')$ with $\ell''>\ell'$. So $(j',\ell')\neq (j,\ell)$.

In type A, this exhausts all the elements of $\nbrii$. In type B there is one further possibility: there can be at most one point $(N,\ell')\in \nbrii_{t+1}$ that is strictly below $\phigh_{i,k}$ and yet is neither shielded nor strictly below $\plow_{i,k}$. But then $\ell'\mod 4$ is always such that no path in $\scr P_{i,k}$ has a lower corner at $(N,\ell')$. 

Thus indeed none of the paths has an upper corner at $(j,\ell)\in \It$. Hence $nm$ has a factor $Y_{j,\ell}^{-1}$ and is not dominant, as required.
\end{proof}

\subsection{Exclusion argument}
\begin{prop}\label{TBsimple}
The module $\L (\prod_{t=2}^{\T-1} \YY{i_t}{k_t})\otimes \L(\prod_{t=1}^\T \YY{i_t}{k_t} ) $ is simple.
\end{prop}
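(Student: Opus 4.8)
The strategy, following the scheme announced at the start of Section~\ref{sec:proof}, is to show that the module $\L(\prod_{t=2}^{\T-1}\YY{i_t}{k_t})\otimes\L(\prod_{t=1}^\T\YY{i_t}{k_t})$ coincides with the irreducible $\L(m_+)$, where $m_+ = \prod_{t=2}^{\T-1}\YY{i_t}{k_t}\prod_{t=1}^\T\YY{i_t}{k_t}$ is the highest monomial. By Proposition~\ref{dommons}(\ref{TxB}), the dominant monomials of the $q$-character of the tensor product are exactly $m_+$ together with the monomials $m_R := \prod_{t=1}^{R-1}\mon(\phigh_{i_t,k_t})\prod_{t=R}^{\T-1}\mon(\psnake_{i_t,k_t;i_{t+1},k_{t+1}})\prod_{t=2}^\T\YY{i_t}{k_t}$ for $1\le R\le\T$, each with multiplicity one, and with $m_{\T} = m_+$ being the highest. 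Since $[\L(m_+)]$ certainly appears in the decomposition of the tensor product in the Grothendieck ring, it suffices to show that no $[\L(m_R)]$ with $R<\T$ appears; equivalently, for each such $m_R$ it suffices to exhibit a monomial of $\chi_q(\L(m_R))$ that is \emph{not} a monomial of $\chi_q(\L(\prod_{t=2}^{\T-1}\YY{i_t}{k_t})\otimes\L(\prod_{t=1}^\T\YY{i_t}{k_t}))$. Indeed, if $\L(m_R)$ for some $R<\T$ were a constituent, then every monomial of $\chi_q(\L(m_R))$ would occur in the tensor product's $q$-character.

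The key step is therefore to produce, for each $R<\T$, such an excluded monomial. The natural candidate is a monomial of $\L(m_R)$ obtained by lowering the $\psnake$-factors as far as possible. Concretely, I would use Theorem~\ref{thmA} to compute the truncation of $\chi_q(\L(m_R))$ to an appropriate set $U\subset I\times\Cx$ — taking $U$ to record precisely the nodes/spectral parameters relevant to the ``snake'' portion of $m_R$ — verifying conditions (\ref{incone})--(\ref{inthinsimple}) by the combinatorics of paths and moves from Section~\ref{sec:paths}, exactly as in the proof of Proposition~\ref{TBsimple}'s cited prerequisite in \cite{MY1}. This identifies a specific low monomial $n_R\in m_R\Q^-_U$ in $\chi_q(\L(m_R))$. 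On the other side, one expands $\chi_q(\L(\prod_{t=2}^{\T-1}\YY{i_t}{k_t})\otimes\L(\prod_{t=1}^\T\YY{i_t}{k_t}))$ via Theorem~\ref{snakechar} as a sum over pairs of non-overlapping path tuples, and shows by the monomial-counting / right-negativity arguments of Lemma~\ref{sll} (and Lemma~\ref{movelemma}) that $n_R$ cannot be written as a product $\prod\mon(p'_t)\cdot\prod\mon(p_t)$ over such tuples: the ``extra'' lowering needed to reach $n_R$ from $m_R$ would force an overlap between the $\psnake$-type path for the first factor and a $\phigh$-type path for the second factor, precisely because $(i_R,k_R)$ and $(i_{R+1},k_{R+1})$ are in prime (not merely general) snake position. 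This overlap obstruction is where primeness of the snake is used.

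I expect the main obstacle to be the careful choice of the truncation set $U$ and the verification that condition~(\ref{inthinsimple}) of Theorem~\ref{thmA} holds — i.e.\ matching the $i$-dominant monomials of the truncated character against rank-one ($\uqsl i$) snake characters — since this requires a uniform treatment of the type~B spinor-node subtleties (the $\eps$-shifts and the possible coincidence of an upper corner of one path with a lower corner of the next, as in Lemma~\ref{rneglem}). The combinatorial heart, though, is the incompatibility argument: showing that the distinguished low monomial $n_R$ of $\L(m_R)$ is absent from the non-overlapping-paths expansion of the tensor product. I would organize this by contradiction, assuming $n_R = \prod_{t=1}^{\T-1}\mon(p'_t)\prod_{t=2}^\T\mon(p_t)$ for non-overlapping tuples, then arguing — as in Lemma~\ref{sll} — that dominance constraints on the intermediate monomials force $p_t = \phigh_{i_t,k_t}$ for $t\ge 2$ and $p'_t\in\{\phigh,\psnake\}$, and finally that to achieve enough lowering to reach $n_R$ one needs $p'_R$ strictly below $\psnake_{i_R,k_R;i_{R+1},k_{R+1}}$, which collides with $p_{R+1}=\phigh_{i_{R+1},k_{R+1}}$, contradicting the non-overlapping hypothesis. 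This completes the exclusion and hence the proof that the tensor product is simple.
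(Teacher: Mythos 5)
Your overall strategy is the same as the paper's: for each non-highest dominant monomial $m_R$ ($2\le R\le \T-1$) of the tensor product $q$-character, exhibit a monomial of $\chi_q(\L(m_R))$ that does not occur in the tensor product, so that $\L(m_R)$ cannot appear in the composition series. Your use of Theorem~\ref{thmA} to produce the candidate excluded monomial $n_R = m_R\,\mon(\phigh_{i_R,k_R})^{-1}\mon(\psnake_{i_R,k_R;i_{R+1},k_{R+1}})$ also matches the paper's proof, including the choice of $U$ as the set of lowering moves from $\phigh_{i_R,k_R}$ to $\psnake_{i_R,k_R;i_{R+1},k_{R+1}}$.

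The gap is in the final exclusion step. You propose to rule out $n_R$ from the non-overlapping-paths expansion of $\L(\prod_{t=2}^{\T-1}\YY{i_t}{k_t})\otimes\L(\prod_{t=1}^\T\YY{i_t}{k_t})$ by invoking ``dominance constraints as in Lemma~\ref{sll}'' to force $p_t=\phigh_{i_t,k_t}$ and $p'_t\in\{\phigh,\psnake\}$. But Lemma~\ref{sll} (and the ``right-negativity'' reasoning it encodes) applies only when the ambient monomial is \emph{dominant}, and $n_R$ is not dominant --- it is obtained from $m_R$ by applying further $A^{-1}$'s --- so nothing forces the path tuples that could represent $n_R$ to be built from $\phigh$'s and $\psnake$'s only. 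As a result your proposed contradiction ($p'_R$ strictly below $\psnake$ collides with $p_{R+1}=\phigh$) is not substantiated: a priori many path tuples, not of this special form, could assemble into $n_R$. What the paper actually does is apply Lemma~\ref{movelemma} to compare the hypothetical path tuples for $n_R$ with those representing $m_R$ in the tensor product, and then run a move-counting/parity argument: every lowering move on one tuple at a point outside $U$ must be balanced by a raising move on the other tuple, while inside $U$ the net number of lowerings must be exactly one; using the non-overlap constraint one gets chains of forced moves between $(i_{R+1},k_{R+1}-r_{i_{R+1}})\in U$ and $(i_{R+1},k_{R+1}+r_{i_{R+1}})\notin U$ that force the net lowering count at $(i_{R+1},k_{R+1}-r_{i_{R+1}})$ to be non-positive, contradicting that it must be one. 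This is the genuine combinatorial content; your sketch skips it. (A small additional slip: your path-tuple factorization $\prod_{t=1}^{\T-1}\mon(p'_t)\prod_{t=2}^\T\mon(p_t)$ corresponds to $\L(\prod_{t=1}^{\T-1})\otimes\L(\prod_{t=2}^\T)$, which is the other side of the short exact sequence, not the tensor product of this proposition.)
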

\begin{proof}
We shall show that if $n$ is any non-highest dominant monomial in $\chi_q(\L  (\prod_{t=2}^{\T-1} \YY{i_t}{k_t})\otimes \L(\prod_{t=1}^\T \YY{i_t}{k_t}))$  then $\chi_q (\L(n))$ contains a monomial not present in $\chi_q(\L  (\prod_{t=2}^{\T-1} \YY{i_t}{k_t})\otimes \L(\prod_{t=1}^\T \YY{i_t}{k_t}))$. This is sufficient, for then $\L(n)$ cannot appear in the composition series of $\L (\prod_{t=2}^{\T-1} \YY{i_t}{k_t})\otimes \L(\prod_{t=1}^\T \YY{i_t}{k_t})$, which means that the only entry of this series is  $\L \left(\prod_{t=2}^{\T-1} \YY{i_t}{k_t} \prod_{t=1}^\T \YY{i_t}{k_t}\right)$, i.e. that indeed the module is simple. 

The dominant monomials are catalogued in Proposition \ref{dommons} part (\ref{TxB}). Consider 
\be n:=\prod_{t=1}^\T \m(\phigh_{i_t,k_t}) \prod_{t=2}^{R-1} \mon(\phigh_{i_t,k_t}) 
                                    \prod_{t=R}^{\T-1} \mon(\psnake_{i_t,k_t;i_{t+1},k_{t+1}}) \label{ndef}\ee
for any given $R$ with $2\leq R\leq \T-1$. We have
\be n = Y_{i_1,k_1} \,\,\cdot\,\,\prod_{t=2}^{R-1} Y_{i_t,k_t}^2 \,\,\cdot\,\, Y_{i_R,k_R} \,\, \cdot\prod_{t=R}^{\T-1} \prod_{\substack{(i,k)\in\\ C_{\psnake_{i_t,k_t;i_{t+1},k_{t+1}}}^{+}}} Y_{i,k}.\ee
This monomial can be written as a product of highest paths, $\phigh_{i,k}$ for each factor $Y_{i,k}$ that appears in this product. In contrast to snake modules, some of these paths overlap. Nonetheless, we can use Theorem \ref{thmA} to compute enough of the $q$-character for our purposes. 

To do so, let $U\subset \Iy$ be the set of points such that performing lowering moves at these points, in some order, on $\phigh_{i_r,k_R}$ yields $\psnake_{i_R,k_R;i_{R+1},k_{R+1}}$. Lemma \ref{movelemmaA} states that such a set of points exists. See for example Figure \ref{Udeffig}. Define
\be \mc M := \left\{ n \mon(\phigh_{i_R,k_R})^{-1} \mon(p): p\in\scr P\right\}\nn\ee
where
\be\nn\scr P := \left\{ p\in \scr P_{i_R,k_R}: p \text{ is weakly above } \psnake_{i_R,k_R;i_{R+1},k_{R+1}}\right\}. \nn\ee
We now argue that $(n,\mc M, U)$ satisfy the conditions of Theorem \ref{thmA}.
Recall that we are writing $(i,k)$ for $(i,aq^k)$. Property (\ref{incone}) is by definition of $\mc M$. Property (\ref{monlydom}) is true because every path $p\in \scr P$ has a lower corner at some point in $\{(j,\ell)\in \It: (j,\ell-r_j)\in U\}$ and $u_{j,\ell}(n)=0$ for all such points. Note in particular that $u_{i_{R+1},k_{R+1}}(n)=0$. Property (\ref{onewayback}) is by inspection. For Property (\ref{inthinsimple}) we have to show that for every $m\in \mc M$ and every $i\in I$ there exists a unique $i$-dominant monomial in $m\Z[A_{i,k}]_{k\in\Z}\cap \mc M$, say $M_i$, such that 
\be \trunc_{\beta_i(M_i \Z[A^{-1}_{j,\ell}]_{(j,\ell)\in U})}\chi_q(\L(\beta_i(M_i))) = \sum_{m'\in m\Z[A_{i,k}]_{k\in\Z} \cap \mc M} \beta_i(m') .\label{req}\ee
It follows from Lemma \ref{movelemma} (case $\T=1$) that there is a unique path $p\in\scr P$ such that $m=n \mon(\phigh_{i_R,k_R})^{-1} \mon(p)$ and further that the only way to produce a monomial in $m\Z[A_{i,k}]_{k\in\Z}$ is to perform the corresponding raising moves at points $(i,k-r_i)$, $k\in \Z$, on this path. Thus, if $p$ has no lower corner of the form $(i,k)$, $k\in \Z$, then $M_i=m$ and this is the only term on either side of (\ref{req}). If $p$ has one lower corner of the form $(i,k)$, $k\in \Z$, then $M_i=mA_{i,k-r_i}$, and both sides of  (\ref{req}) are equal to $M_i(1+A^{-1}_{i,k-r_i})$. Finally, in type $B$ when $i=N$ and $i_R<N$, it is possible that $p$ has lower corners at $(N,k)$ and $(N,k+2)$ for some $k\in \Z$. In that case  $M_i=mA_{N,k+1}A_{N,k-1}$ and both sides of (\ref{req}) are equal to $M_i(1+A^{-1}_{N,k+1} + A^{-1}_{N,k+1}A^{-1}_{N,k-1})$. 

Therefore Theorem \ref{thmA} applies, and we have 
\be\nn \trunc_{n\Z[A_{j,\ell}^{-1}]_{(j,\ell)\in U}} \chi_q(\L (n)) 
   = n \mon(\phigh_{i_R,k_R})^{-1} \sum_{p\in\scr P} \mon(p).\ee
In particular $\chi_q(\L(n))$ includes the monomial $m':= n \mon(\phigh_{i_R,k_R})^{-1} \mon(\psnake_{i_R,k_R;i_{R+1},k_{R+1}})$. 

On the other hand, $m'$ is not a monomial of $\chi_q(\L ( \prod_{t=2}^{\T-1} \YY{i_t}{k_t})\otimes \L(\prod_{t=1}^\T \YY{i_t}{k_t}))$. Indeed, suppose for a contradiction that $m'$ \emph{is} a monomial of $\chi_q(\L ( \prod_{t=2}^{\T-1} \YY{i_t}{k_t})\otimes \L(\prod_{t=1}^\T \YY{i_t}{k_t}))$. Then, by Theorem \ref{snakechar}, $m'=\prod_{t=2}^{\T-1} \mon(p_t)\prod_{t=1}^\T \mon(p'_t)$ for some unique $(p_2,\dots, p_{\T-1}) \in \overline{\scr P}_{(i_t,k_t)_{2\leq t\leq \T-1}}$ and $\pps\in \nops$. Call these the ``inner'' and ``outer'' tuples, respectively. By Lemma \ref{movelemma} the inner and outer tuples are obtained from the inner and outer tuples of $n$, namely $(\phigh_{i_2,k_2},\dots, \phigh_{i_{R-1},k_{R-1}},\psnake_{i_R,k_R;i_{R+1},k_{R+1}}, \dots, \psnake_{i_{\T-1},k_{\T-1};i_\T,k_\T})$ and $\ptop$, \confer (\ref{ndef}), by performing some sequence of moves, such that no inverse pair of moves is performed on either tuple. In particular, only lowering moves can be performed on the outer tuple, since we start with $\ptop$. 
In order to reach $m'$ from $n$, any lowering move at a point not in $U$ must be cancelled by a raising move on the other tuple of paths, and at every point in $U$ we must perform exactly one more lowering move than raising move. Consequently, on the inner tuple we cannot perform any lowering move not in $U$. But, by inspection, it is not possible to lower the inner tuple at any point in $U$ either. So we must perform, on the outer tuple, at least one lowering move at every point in $U$. Let $K\geq 1$ be the number of lowering moves performed on the outer tuple at the point $(i_{R+1},k_{R+1}-r_{i_{R+1}})\in U$. To avoid overlaps, we must also lower the outer tuple $K'\geq K$ times at the point $(i_{R+1},k_{R+1}+r_{i_{R+1}})\notin U$. Each of the latter moves must be cancelled by a raising move at the same point on the inner tuple. But, again, this creates an overlap unless we also perform at least $K'$ raising moves at the point $(i_{R+1},k_{R+1}-r_{i_{R+1}})\in U$ on the inner tuple. So the net number of lowering moves at this point is non-positive -- a contradiction, since it must be exactly 1. 
\end{proof}

\begin{figure}
\caption{\label{Udeffig} See the proof of Proposition \ref{TBsimple}. In type A, with $\T=6$, suppose for example that the highest monomial $\prod_{t=2}^{\T-1} \YY{i_t}{k_t}\prod_{t=1}^\T \YY{i_t}{k_t} $ is as shown on the left below.  Single (resp. double) black circles indicate factors $Y$ (resp $Y^2$). The dominant monomial $n$ corresponding to $R=4$ is shown on the right. Lowering $n$ at the points in $U$, as marked, produces a monomial in $\chi_q(\L(n))$ not present in $\chi_q(\prod_{t=2}^{\T-1} \YY{i_t}{k_t})\chi_q(\prod_{t=1}^\T \YY{i_t}{k_t})$. }
\be \begin{tikzpicture}[scale=.35,yscale=-1]
\draw[help lines] (0,0) grid (10,17);
\draw (1,-1) -- (9,-1);
\draw[dashed] (-1,-1) --(1,-1); \draw[dashed] (9,-1) -- (11,-1);
\foreach \x in {1,2,3,4,5,6,7,8,9} {\filldraw[fill=white] (\x,-1) circle (2mm); }
\foreach \x/\y in {    4/2,5/5,5/9,4/14     } {\filldraw[thick,double] (\x,\y) circle (2mm);}
\foreach \x/\y in {4/0,                 3/17} {\filldraw (\x,\y) circle (2mm);}
\end{tikzpicture}
\begin{tikzpicture}[scale=.35,yscale=-1]
\draw[help lines] (0,0) grid (10,17);
\draw (1,-1) -- (9,-1);
\draw[dashed] (-1,-1) --(1,-1); \draw[dashed] (9,-1) -- (11,-1);
\foreach \x in {1,2,3,4,5,6,7,8,9} {\filldraw[fill=white] (\x,-1) circle (2mm); }
\foreach \x/\y in {    4/2,5/5     } {\filldraw[thick,double] (\x,\y) circle (2mm);}
\foreach \x/\y in {4/0,        5/9 } {\filldraw (\x,\y) circle (2mm);}
\foreach \x/\y in {2/12, 2/16     } 
{\filldraw (\x,\y) circle (2mm);}
\foreach \x/\y in {7/11, 5/15     } 
{\filldraw (\x,\y) circle (2mm);}
\foreach \x/\y in {5/10,6/11,4/11,3/12,5/12,4/13} {\draw (\x,\y) circle (2mm);}
\fill[opacity=.1] (2,12) -- (5,9) -- (7,11) -- (4,14);
\node at (4.5,11.5) {$U$};
\end{tikzpicture}\nn\ee

\end{figure}

Finally, we can prove Theorem \ref{Tsys}.
\proof[Proof of Theorem \ref{Tsys}] 
An element of $\chi_q(\groth\uqgh)$ is determined uniquely its dominant monomials. Therefore
the equality (\ref{grl}) follows from Propositions \ref{dommons} and \ref{MNsimpspec}.
Finally, $\L (\prod_{t=2}^{\T-1} Y_{i_t,k_t}) \otimes \L(\prod_{t=1}^\T Y_{i_t,k_t})$ and $\L(\prod_{(i,k)\in\nbri}Y_{i,k}) \otimes \L(\prod_{(i,k)\in\nbrii}Y_{i,k})$ are simple, by Propositions \ref{TBsimple} and \ref{MNsimpspec} respectively. 
 \qed

\begin{appendix}
\section{Examples in Types C and D}\label{typeCD}
As noted in the introduction, we believe that extended T-systems exist in all Dynkin types. Such recursions should allow the classes of (at least) all minimal affinizations to be expressed in terms of the classes of Kirillov-Reshetikhin modules (and hence, by means of the usual T-system, in terms of the classes of fundamental modules). 
Here we discuss some examples which illustrate  new features that arise beyond types A and B. 

It should be stressed that all relations in this section are conjectural. 
We have checked, with the aid of a computer, that they are correct \emph{if} one assumes that the algorithm of \cite{FM} gives the correct $q$-character for all of the irreducible representations that appear.

Since we deal with specific examples only, it is convenient in this appendix to use the shorthand notation $1_0=Y_{1,0}$, $2_3= Y_{2,3}$ and so on. Also, we write $[m]$ for the class of $\L(m)$ in $\groth\uqgh$.

\subsection*{Type $C_3$} We use the node numbering \be\nn\tikz[baseline =0,scale=.5]{\draw (0,0) -- ++(1,0);  \draw (2,0.1) -- (1,.1); \draw (2,-.1) -- (1,-.1); \draw[thick] (1.7,.3) -- (1.4,0) -- (1.7,-.3);
\filldraw[fill=white] (0,0) circle (.2) node[above=2mm] {$\scriptstyle 1$};
\filldraw[fill=white] (1,0) circle (.2) node[above=2mm] {$\scriptstyle 2$};
\filldraw[fill=white] (2,0) circle (.2) node[above=2mm] {$\scriptstyle 3$};
}.\ee
Thus $r_1=r_2=1$, $r_3=2$. 
Let us consider choosing as the top module the minimal affinization $\L(3_0 2_5 2_7)$. It is clear that the leftmost and rightmost factors should be $3_{0}$ and $2_{7}$ respectively. Computations then suggest that
\be\nn [3_0 2_5] [ 2_5 2_7]
= [3_0 2_5 2_7][2_5]
 + [2_1 1_4 1_6 3_6].\ee
A single neighbour, $\L(2_1 1_4 1_6 3_6)$, is generated. It is not a minimal affinization, so, as in type B, the recursion does not close among minimal affinizations. But in type B it was always possible to interpret the neighbours as ``snakes'': that is, to read the factors of the dominant monomial in order of increasing shift (the lower index), such that, in particular, no non-neighbouring pair defines a minimal affinization.  Here, both $\L(2_11_4)$ and $\L(2_13_6)$ are minimal affinizations, so the structure is rather to be thought of as pictured in Figure \ref{sjfig}. 
\begin{figure}\caption{\label{sjfig}Illustrative examples of highest monomials of the new types of module occuring in types $C_3$ (left pair) and $D_5$ (right pair).}
\be\begin{tikzpicture}[scale=.7] \node {$2_1$} child {node {$1_4$} child {node {$1_6$}}}  child[level distance =30mm]  {node {$3_6$}}; 
\begin{scope}[xshift=30mm,yshift=30mm] \node {$2_{-3}$} child{ node {$2_{-1}$} child{ node {$2_1$} child {node {$1_4$} child {node {$1_6$} child{node {$1_8$} child {node {$1_{10}$}}}}} child[level distance = 30mm] {node {$3_6$} child {node {$3_{10}$}}}}}; \end{scope}
\begin{scope}[xshift=60mm]  \node {$3_1$} child{node {$4_4$}} child{node {$5_4$}}; 
\end{scope}
\begin{scope}[xshift=90mm,yshift=15mm]  \node {$3_{-1}$} child{ node {$3_1$} child{node {$4_4$}} child{node {$5_4$}}}; 
\end{scope}
\end{tikzpicture}
\nn\ee  

\end{figure}

Thus, when we come to treat $\L(2_1 1_4 1_6 3_6)$ in turn as the top module, we should certainly take $2_{1}$ as the leftmost factor, but it is unclear whether $1_{6}$ or $3_{6}$ is rightmost. However, it appears that either choice works. For example, if we pick $3_6$ we find the relation
\be\nn [2_1 1_4 1_6] [1_41_6 3_6] = [2_1 1_4 1_6 3_6] [1_4 1_6] + [2_5] [1_2 1_4 1_6] [1_4 1_6]. \ee
Note that there are 3 neighbours, not 2. This is also a feature of the usual T-system in type C. To complete the recursion, for $[2_1 1_4 1_6]$ we have the relation 
\be\nn [2_1 1_4] [1_4 1_6] = [2_1 1_4 1_6] [1_4] + [2_5][3_2], \ee
while $[1_41_6 3_6]$ actually factors, $[1_4 1_6 3_6] = [1_4 1_6] [3_6]$. Thus
we have succeeded in expressing $[2_1 1_4 1_6 3_6]$, and hence the original module $[3_0 2_5 2_7]$, in terms of the classes of Kirillov-Reshetikhin modules. 

It appears that larger examples work in the same way. If we start with the minimal affinization $\L(3_{-4} 3_0 2_5 2_7 2_9 2_{11})$, we find 
\be\nn [3_{-4} 3_0 2_5 2_7 2_9] [ 3_0 2_5 2_7 2_9 2_{11}]
= [3_{-4} 3_0 2_5 2_7 2_9 2_{11}][ 3_0 2_5 2_7 2_9]
 + [3_8] [2_{-3} 2_{-1} 2_1 1_4 1_6 1_8 1_{10} 3_6 3_{10}].\ee
In the neighbouring module $\L(2_{-3} 2_{-1} 2_1 1_4 1_6 1_8 1_{10} 3_6 3_{10})$, the factor $2_1$ is ``trivalent'', c.f. the picture in Figure \ref{sjfig}.
Choosing $3_{10}$ to be the rightmost factor,
\bea [2_{-3} 2_{-1} 2_1 1_4 1_6 1_8 1_{10} 3_6 ]
    [       2_{-1} 2_1 1_4 1_6 1_8 1_{10} 3_6 3_{10}]
&=&[2_{-3} 2_{-1} 2_1 1_4 1_6 1_8 1_{10} 3_6 3_{10}]
[  2_{-1} 2_1 1_4 1_6 1_8 1_{10} 3_6] \nn\\ &&+ [3_0 2_5 2_7 2_9] [3_{-2} 1_4 1_6 1_8 1_{10}] [1_{-2} 1_0 1_2 1_4 1_6 1_8 1_{10}]\nn \eea
and so on. Observe that the neighbours generated here are all, once more, minimal affinizations; in particular they do not have any ``trivalent'' factors.

\subsection*{Type $D_5$}
We use the node numbering \be\nn\begin{tikzpicture}[baseline=0pt,scale=.5] \draw (1,0) -- ++(2,0) -- ++(60:1) ++(60:-1) -- ++(-60:1) ;    
\filldraw[fill=white] (1,0) circle (.2cm) node [above=1mm] {$\scriptstyle 1$};    
\filldraw[fill=white] (3,0) circle (.2cm) node [right=1mm] {$\scriptstyle 3$};    
\filldraw[fill=white] (2,0) circle (.2cm)  node [above=1mm] {$\scriptstyle 2$};    
\filldraw[fill=white] (3,0)++(60:1) circle (.2cm)  node [right=1mm] {$\scriptstyle 4$};    
\filldraw[fill=white] (3,0)++(-60:1) circle (.2cm) node [right=1mm] {$\scriptstyle 5$};    
\end{tikzpicture}.\ee    
It appears that type D works similarly to type C. Consider the minimal affinization $\L(2_0 2_2 3_5)$. We find
\be\nn [2_0 2_2 ] [2_2 3_5]  =[2_0 2_2 3_5] [2_2] + [1_11_3] [3_14_45_4] \ee
and then if we choose $5_4$ as the rightmost factor of $3_14_45_4$, we find
\be\nn [3_1 4_4][4_4 5_4]= [3_1 4_4 5_4][4_4 ] + [2_2] [4_24_4] [4_4]. \ee
Here $[4_4,5_5]$ factors, $[4_4 5_4] = [4_4] [5_4]$, so the recursion is complete. 
A more generic starting point is the minimal affinization $\L(2_{-2} 2_0 2_2 3_5)$: we find
\be\nn [2_{-2} 2_0 2_2 ] [2_0 2_2 3_5]  =[2_{-2} 2_0 2_2 3_5] [2_0 2_2] + [1_{-1} 1_11_3] [3_{-1} 3_14_45_4] \ee
and then 
\be\nn [3_{-1} 3_1 4_4][3_1 4_4 5_4]= [3_{-1} 3_1 4_4 5_4][3_1 4_4 ] + [2_02_2] [4_04_24_4] [5_04_4]. \ee
Here there are no common factors. Note that $\L(5_0 4_4)$ is a minimal affinization.
Treating it as the top module, we find
\be\nn [5_0] [4_4] = [5_04_4] + [2_2]. \ee
Noting finally that
\be\nn [3_{-1} 3_1] [3_1 4_4] = [3_{-1} 3_1 4_4] [3_1] + [2_02_2] [4_0] [5_0 5_2],\ee
we have all the relations needed to express $[2_{-2} 2_0 2_2 3_5]$ in terms of the classes of Kirillov-Reshetikhin modules. 

\section{On Thin Special Truncated $q$-characters}\label{proofA}
In this Appendix we prove Theorem \ref{thmA}. We shall need the following two results.
\begin{prop}[\cite{MY1}]\label{Aprop}
Let $V$ be a finite-dimensional $\uqgh$-module and $m,m'\in\mchiq(V)$. Let $\ket m\in \ker\left(\phi^\pm_i(u)-\gamma(m)^\pm_i(u)\right)\subset V_m$ for all $i\in I$. Then, for all $j\in I$, at least one of the following holds: 
\begin{enumerate}
\item there is an $a\in\Cx$ such that $m'=mA_{j,a}$ (resp. $m'=mA_{j,a}^{-1}$), or
\item for all $r\in\Z$, when $x^+_{j,r}\on \ket m$ (resp. $x^-_{j,r}\on \ket m$) is decomposed into $l$-weight spaces, \confer (\ref{lwdecomp}), its component in  $V_{\bs\gamma(m')}\equiv V_{m'}$ is zero.\qed
\end{enumerate}
\end{prop}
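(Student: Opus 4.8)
The final statement to prove is Proposition \ref{Aprop}, quoted from \cite{MY1}. Since this is a citation from the authors' earlier paper, the natural "proof" here is to recall the argument rather than to reprove it from scratch; but let me sketch how the proof goes.

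\medskip

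The plan is to analyze the action of $x^\pm_{j,r}$ on $\ket m$ using the defining relations of $\uqgh$, in particular the relation $[h_{i,n},x_{j,m}^\pm] = \pm\frac1n[nB_{ij}]_q c^{\mp|n|/2}x_{j,n+m}^\pm$ from (\ref{hxpm}) and the formula (\ref{phidef}) expressing $\phi_i^\pm(u)$ in terms of $k_i$ and the $h_{i,m}$. First I would fix $j\in I$ and consider the vector $w := x^+_{j,r}\on\ket m$ (the case of $x^-_{j,r}$ being entirely parallel). Decompose $w = \sum_{\bs\gamma} w_{\bs\gamma}$ into generalized $l$-weight spaces as in (\ref{lwdecomp}), and suppose the component $w_{\bs\gamma(m')}$ in $V_{m'}$ is nonzero; the goal is to show $m' = mA_{j,a}$ for some $a\in\Cx$. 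The key point is that $\ket m$ is a genuine (not merely generalized) eigenvector for all $\phi_i^\pm(u)$, so one can compute how $\phi_i^\pm(u)$ acts on $x^+_{j,r}\on\ket m$ by commuting $\phi_i^\pm(u)$ past $x^+_{j,r}$; the commutator is controlled, via (\ref{hxpm}) and (\ref{phidef}), by the ``structure constants'' that produce precisely the factor $A_{j,a}^{\pm1}$ in the $l$-weight.

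\medskip

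Concretely, the next step is to observe that for the $\uqsl j$ copy generated by $x^\pm_{j,r},\phi^\pm_{j,\pm r}$, the vector $x^+_{j,r}\on\ket m$ lies in an $l$-weight space of the form $\beta_j(m)B_{j}$ where $B_j$ is, up to the action of the other nodes, the rank-one version of $A_{j,a}$; this is exactly the statement that raising by $x^+_{j,r}$ shifts the $j$-th Drinfeld polynomial data in the way encoded by $A_{j,a}$. For the nodes $i\neq j$ one uses that $x^+_{j,r}$ commutes with $h_{i,n}$ up to the term $\frac1n[nB_{ij}]_q c^{\mp|n|/2}x_{j,n+r}^+$, and $B_{ij}<0$ for $i\sim j$; working through the generating-function identity (\ref{phidef}) shows that the $i$-th component of the $l$-weight gets multiplied by exactly the factor $\prod Y_{i,\bullet}^{-1}$ appearing in (\ref{adef}) for $A_{j,a}$, and is unchanged for $i\not\sim j$. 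Assembling these contributions over all $i\in I$ gives that any $l$-weight appearing in $x^+_{j,r}\on\ket m$ is of the form $mA_{j,a}$ for some $a$, which is alternative (1); if no such component is nonzero we are in alternative (2). The case $x^-_{j,r}$ is identical with $A_{j,a}$ replaced by $A_{j,a}^{-1}$.

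\medskip

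The main obstacle, and the reason this is quoted rather than reproved in detail, is the bookkeeping of the commutation through the exponential generating function (\ref{phidef}): one must show that commuting $\phi_i^\pm(u)$ past $x^+_{j,r}$ acting on a \emph{simultaneous} eigenvector produces a new eigenvector whose eigenvalue ratio is precisely the rational function attached to $A_{j,a}$, rather than some more complicated modification, and one must rule out that several different values of $a$ contribute incoherently. This is handled by the argument in \cite{MY1} (following in turn the standard $l$-weight analysis of \cite{FR}), using that $c^{\pm1/2}$ acts as the identity on a type~1 module so the central terms drop out, and I would simply cite that argument. Thus the proof reduces to: \emph{see \cite{MY1}, together with the $l$-weight computations of \cite{FR}}.

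\medskip

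\begin{proof}[Proof of Proposition \ref{Aprop}]
This is proved in \cite{MY1}; it is a consequence of the analysis of $l$-weights in \cite{FR}. We sketch the argument. Fix $j\in I$ and consider $w:=x^+_{j,r}\on\ket m$ (the case of $x^-_{j,r}$ is parallel, with $A_{j,a}$ replaced throughout by $A_{j,a}^{-1}$). Since $c^{\pm 1/2}$ acts as the identity on the type~1 module $V$, the relation (\ref{hxpm}) gives $[h_{i,n},x^+_{j,r}] = \tfrac1n[nB_{ij}]_q\, x^+_{j,n+r}$ for all $i\in I$, $n\in\Z\setminus\{0\}$. Using (\ref{phidef}) and the fact that $\ket m$ is a genuine joint eigenvector of the $\phi^\pm_i(u)$ with eigenvalue $\gamma(m)^\pm_i(u)$, one computes that for each $i$ the action of $\phi^\pm_i(u)$ on any $l$-weight component of $w$ has eigenvalue $\gamma(m)^\pm_i(u)$ multiplied by the rational function obtained from the $i$-th factor of (\ref{adef}); for $i$ not adjacent to $j$ this factor is trivial. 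Hence every $\bs\gamma(m')$ occurring in the $l$-weight decomposition of $w$ satisfies $m'=mA_{j,a}$ for some $a\in\Cx$. If no such component is nonzero, then alternative (2) holds. See \cite{MY1} for the detailed bookkeeping.
\end{proof}
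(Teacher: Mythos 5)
The paper does not give a proof of Proposition \ref{Aprop}; the \texttt{\textbackslash qed} in the statement itself indicates the result is quoted verbatim from \cite{MY1}. Your proposal correctly identifies this and defers to that reference, so your approach is essentially the same as the paper's. The sketch you supply of the underlying argument — commuting $\phi_i^\pm(u)$ past $x_{j,r}^+$ via (\ref{hxpm}), using that $c^{\pm 1/2}$ acts trivially on a type 1 module, and reading off that the resulting $l$-weight shift is the one encoded by $A_{j,a}$ — is the standard Frenkel--Reshetikhin $l$-weight analysis and is consistent with the argument given in \cite{MY1}, which also traces back to \cite{FR}.
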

\begin{lem}\label{thinoverXsl2lem}
Let $\g=\mf{sl}_2$. Let $m\in \P$. Let $U\subset \{1\}\times \Cx$. Let 
 $M\in \P^+$ be such that $\L(M)$ is thin in $U$ and $m$ is a monomial in $\trunc_{M\Q^-_U}\chi_q(\L(M))$.
Then exactly one of $(i),(ii),(iii)$ holds for all $(1,a)\in U$: 
\begin{align*}
&(i)  &0&< u_{1,aq^{-1}}(m) = u_{1,aq}(m)+1 ,  &mA_{1,a}^{-1}&\in\mchiq(\L(M)) ;\\
&(ii) &0&< u_{1,aq^{-1}}(m) \leq  u_{1,aq}(m), &mA_{1,a}^{-1}&\in\mchiq(W(M)) \setminus \mchiq(\L(M)) ; \\
&(iii) &0&\geq u_{1,aq^{-1}}(m), &mA_{1,a}^{-1}&\notin \mchiq(W(M)).  
\end{align*} 
\end{lem}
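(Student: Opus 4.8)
The plan is to reduce the statement to the well-understood representation theory of $\uqslt$, where modules are indexed by their Drinfeld polynomials, tensor products of fundamental modules (equivalently, Weyl modules) have a combinatorial $q$-character, and the irreducible modules have $q$-characters given by products of ``$2$-strings'' $Y_{1,b}Y_{1,bq^2}\cdots$. Concretely, write $M=\prod_j \prod_{t=0}^{k_j-1} Y_{1,b_j q^{2t}}$ with the $2$-strings $\{b_jq^{2t}\}$ chosen in the standard reduced form, so that $\L(M)\cong\bigotimes_j \L(\st 1 {b_j} {k_j})$ and the monomials of $\chi_q(\L(M))$ are products of monomials of the KR-factors; the monomials of $\chi_q(W(M))$ are the products of monomials of $\chi_q(\L(Y_{1,b}))$ over all the boxes of the $2$-strings, with no non-overlapping constraint. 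Fix a monomial $m$ of $\trunc_{M\Q^-_U}\chi_q(\L(M))$ and $(1,a)\in U$; since $\L(M)$ is thin in $U$, $m$ records, for each variable $Y_{1,c}$ with $c$ in the relevant $q$-shifted range, an exponent in $\{-1,0,1\}$, and $u_{1,aq^{-1}}(m)$ and $u_{1,aq}(m)$ are among these. The trichotomy to be proved is a statement purely about whether $mA_{1,a}^{-1}=mY_{1,aq}^{-1}Y_{1,aq^{-1}}^{-1}$ lies in $\mchiq(\L(M))$, in $\mchiq(W(M))\setminus\mchiq(\L(M))$, or outside $\mchiq(W(M))$ entirely.

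First I would dispose of case $(iii)$: if $u_{1,aq^{-1}}(m)\le 0$, then $mA_{1,a}^{-1}$ has $u_{1,aq^{-1}}(mA_{1,a}^{-1})\le -1$; but every monomial of $\chi_q(W(M))=\prod (\text{monomials of }\chi_q\L(Y_{1,b}))$ has, at each variable, exponent bounded below by $-(\text{number of boxes }b\text{ with }b=c\cdot q^{?})$ — more precisely one checks directly from the two monomials $Y_{1,b}$ and $Y_{1,bq^2}^{-1}$ of $\chi_q(\L(Y_{1,b}))$ that the exponent of $Y_{1,aq^{-1}}$ in any monomial of $\chi_q(W(M))$ is $\ge -(\text{number of }2\text{-string boxes at }aq^{-1})$, and $u_{1,aq^{-1}}(m)\le 0$ forces that count to be $0$, hence $mA_{1,a}^{-1}\notin\mchiq(W(M))$. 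Conversely if $u_{1,aq^{-1}}(m)>0$ then $aq^{-1}$ is a box of some $2$-string; lowering $m$ at $a$ (a raising/lowering move on the path picture of \S\ref{sec:paths}, or elementarily on the $\uqslt$ monomial) produces a monomial still of $\chi_q(W(M))$, which settles that $mA_{1,a}^{-1}\in\mchiq(W(M))$ in cases $(i)$ and $(ii)$.

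The crux is distinguishing $(i)$ from $(ii)$: when does the lowered monomial stay inside $\chi_q(\L(M))$? Here I would use the path/move description: a monomial of $\chi_q(\L(M))$ corresponds to a non-overlapping tuple of $\uqslt$-paths, one per $2$-string, and Lemma~\ref{movelemma} says lowering moves are allowed precisely when they do not create overlaps. The condition $u_{1,aq^{-1}}(m)=u_{1,aq}(m)+1$ versus $u_{1,aq^{-1}}(m)\le u_{1,aq}(m)$ is exactly the bookkeeping that decides whether there is ``room'' to perform the lowering move $A_{1,a}^{-1}$ without forcing two consecutive paths to overlap: in case $(i)$ the path carrying the corner at $(1,aq^{-1})$ can be lowered because the next path down does not already have a corner at $(1,aq)$, whereas in case $(ii)$ such a move is blocked and the resulting monomial, though in $W(M)$, cannot be realized by a non-overlapping tuple, hence is absent from $\chi_q(\L(M))$. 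That the three cases are mutually exclusive and exhaustive then follows by noting $u_{1,aq^{-1}}(m)\in\{-1,0,1\}$ and $u_{1,aq}(m)\in\{-1,0,1\}$ by thinness, and checking the small finite list of possibilities; I would also invoke Proposition~\ref{Aprop} to guarantee that when $mA_{1,a}^{-1}$ does lie in $\mchiq(\L(M))$ it is reached from $m$ by exactly the move $A_{1,a}^{-1}$, tying the monomial statement to the path statement. The main obstacle I anticipate is making the overlap/no-room dichotomy in the $(i)$ versus $(ii)$ split fully rigorous and exhaustive rather than ``by inspection,'' since it requires a careful but elementary case analysis of how consecutive $2$-strings interlock; everything else is either standard $\uqslt$ theory or a direct exponent count.
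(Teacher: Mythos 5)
The paper's own proof is a one-line citation: ``The result follows from the known closed forms of all Weyl modules [CPweyl] and simple modules, [CPsl2], in type $A_1$.'' Your proposal takes the same basic route (reduce to the explicit $\uqslt$ theory of $q$-strings, KR modules, and Weyl modules), so in that sense the approach is not different.

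However, the way you have spelled out case $(iii)$ contains a concrete error. You assert that the exponent of $Y_{1,aq^{-1}}$ in any monomial of $\chi_q(W(M))$ is bounded below by $-(\text{number of boxes at }aq^{-1})$, and then that $u_{1,aq^{-1}}(m)\le 0$ forces that count to be zero. Neither step is right. From $\chi_q(W(M)) = \prod_{\text{boxes } c}(Y_{1,c}+Y_{1,cq^2}^{-1})$, a factor $Y_{1,aq^{-1}}^{-1}$ arises from lowering a box at $aq^{-3}$, not $aq^{-1}$; so the lower bound on the exponent involves the count of boxes at $aq^{-3}$, which $u_{1,aq^{-1}}(m)\le 0$ does not control. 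And even for the quantity that actually matters — whether one more lowering at $a$ is possible, i.e.\ whether $n_a + 1 \le u_{1,aq^{-1}}(M)$ where $m = M\prod_c A_{1,c}^{-n_c}$ — the hypothesis $u_{1,aq^{-1}}(m)\le 0$ (which unpacks to $u_{1,aq^{-1}}(M) \le n_a + n_{aq^{-2}}$) does not by itself rule out $n_a+1 \le u_{1,aq^{-1}}(M)$ when $n_{aq^{-2}}>0$. Closing that loophole requires using the nested structure of the $2$-strings in the standard decomposition of $M$ (i.e.\ the actual shape of $\chi_q(\L(M))$ from [CPsl2]), which is exactly the part you have not supplied.

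In addition, you remark that thinness in $U$ makes all exponents lie in $\{-1,0,1\}$; this is not what ``thin in $U$'' means (it is a multiplicity-one condition on monomials of the truncated $q$-character, not an exponent bound), so that claim also needs justification and is in any case not what the lemma's trichotomy rests on. Finally, you yourself flag that the $(i)$ versus $(ii)$ split is left at the level of ``inspection.'' So while the reduction to known $A_1$ structure is the right idea, the proposal as written does not constitute a proof: the $(iii)$ step is wrong as stated, the $\{-1,0,1\}$ claim is unjustified, and the $(i)/(ii)$ step is admittedly incomplete.
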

\begin{proof}
The result follows from the known closed forms of all Weyl modules \cite{CPweyl} and simple modules, \cite{CPsl2}, in type $A_1$. 
\end{proof}

\begin{proof}[Proof of Theorem \ref{thmA}]
For convenience, let us define
\be \chi_n := \trunc_{m_+ \Q^-_{U,(=n)}}\chi_q(\L (m_+))
 \qquad \mc M_n := \trunc_{m_+ \Q^-_{U,(=n)}} (\mc M),\ee
and similarly $\chi_{\leq n}$ and $\mc M_{\leq n}$. 

Given property (\ref{incone}), to prove the equality (\ref{headch}) it is sufficient to establish the following claim for all $n\in\Z_{\geq 0}$.

\emph{Claim:}
\be \chi_n = \sum_{m\in \mc M_n}m.\label{claim}\ee
We proceed by induction on $n$. The claim is true for $n=0$, given Property (\ref{monlydom}).  
So for the inductive step, let $n\in \Z_{>0}$ and suppose the claim is true for $n-1$. 

It follows from Proposition \ref{Aprop} that all monomials $m'$ of $\chi_n$ are of the form $m A_{i,a}^{-1}$ for some monomial $m$ of $\chi_{n-1}$ and some $(i,a)\in I\times \Cx$. For suppose not: then by Proposition \ref{Aprop}, $\L(m_+)_{m'}$ contains a highest $l$-weight vector and so generates a proper submodule -- a contradiction. Therefore by the inductive hypothesis all monomials $m'$ in $\chi_n$ are of the form $m A_{i,a}^{-1}$ for some $m\in\mc M_{n-1}$ and some $(i,a)\in I\times \Cx$. By definition of $\chi_n$, $(i,a)\in U$.

Property (\ref{monlydom}) implies that if $m'\in \mc M_n$ then there exists an $i\in I$ such that $m'$ is not $i$-dominant. Hence Properties (\ref{incone}) and (\ref{inthinsimple}) together imply that every $m'\in\mc M_n$ is also of the form $mA_{i,a}^{-1}$ for some $m\in \mc M_{n-1}$ and some $(i,a)\in U$. 

Consequently, it is enough to consider monomials of the form  $mA_{i,a}^{-1}$ for some $m\in \mc M_{n-1}$ and some $(i,a)\in U$.

Let $M$ be the unique $i$-dominant monomial in $m \Q^+_{U\cap (\{i\}\times \Cx),(>0)}\cap \mc M$. Property (\ref{inthinsimple}) asserts that such an $M$ exists (possibly $m=M$) and  moreover that the simple $\uqsl i$-module $\L(\beta_i (M))$ is thin in $U$.  Lemma \ref{thinoverXsl2lem} therefore implies that exactly one of the following three cases applies.
\begin{enumerate}[(I)]
\item $0< u_{i,aq^{-r_i}}(m) = u_{i,aq^{r_i}}(m) +1$, and $\beta_i(mA_{i,a}^{-1})\in \mchiq(\L(\beta_i(M)))$.
\item $0< u_{i,aq^{-r_i}}(m) \leq u_{i,aq^{r_i}}(m)$, and $\beta_i(mA_{i,a}^{-1})\in\mchiq(W(\beta_i(M))) \setminus \mchiq(\L(\beta_i(M)))$.
\item $0\geq u_{i,aq^{-r_i}}(m)$ and $\beta_i(mA_{i,a}^{-1})\notin\mchiq(W(\beta_i(M)))$.
\end{enumerate}
We shall now complete the inductive step by showing that in case (I), $mA_{i,a}^{-1}$ appears, with coefficient exactly 1, on both sides of (\ref{claim}), while in cases (II) and (III), $mA_{i,a}^{-1}$ does not appear on either side of (\ref{claim}). 

First observe that by the inductive assumption, 
\be\nn \trunc_{mA_{i,a}^{-1} \Q^+_{U\cap (\{i\}\times \Cx) ,(>0)}}  \mchiq(\L(m_+))
     = \trunc_{mA_{i,a}^{-1} \Q^+_{U\cap (\{i\}\times \Cx) ,(>0)}}  \mc M \ee 
and by Properties (\ref{incone}) and (\ref{inthinsimple}), $M$ is the unique $i$-dominant monomial in this set.

Now consider case (I). By Property (\ref{inthinsimple}), $mA_{i,a}^{-1}\in\mc M$ (note that $\beta_i$ is injective when restricted to $m\Q_{\{i\}\times\Cx}$). By the injectivity and property (\ref{tauj})  of the homomorphism $\tau_i$ we deduce that $m A_{i,a}^{-1}$ must be a monomial of $\chi_q(\L (m_+))$. The coefficient of $\beta_i(mA_{i,a}^{-1})$ in $\chi_q(\L(\beta_i(M)))$ is 1, so therefore $mA_{i,k}^{-1}$ must have coefficient exactly 1 in $\chi_q(\L(m_+))$, for if it appeared with a larger coefficient it would not be part of a consistent $\uqsl i$-character.

Now consider case (II). By Property (\ref{inthinsimple}), $mA_{i,a}^{-1}\notin \mc M$. Hence by Property (\ref{onewayback}), $mA_{i,a}^{-1} A_{j,b}\notin \mc M$ unless $(j,b)=(i,a)$. If $mA_{i,a}^{-1} A_{j,b}$ is not in $m_+ \Q^-$ then it is not in $\mchiq(\L(m_+))$ by (\ref{imchiq}). If $mA_{i,a}^{-1} A_{j,b}$ is in $m_+\Q^-$ then $v(mA_{i,a}^{-1} A_{j,b})=n-1$ and we can use the inductive assumption. Therefore 
\be\label{nin}mA_{i,a}^{-1} A_{j,b}\notin \mchiq(\L(m_+))\quad\text{ unless }\quad  (j,b) = (i,a).\ee  
Now suppose, for a contradiction, that $m':=mA_{i,a}^{-1}\in\mchiq(\L(m_+))$.  Then we can pick a non-zero $\ket{m'}$ such that for all $i\in I$, $\ket{m'}\in \ker(\phi_i^\pm(u)-\gamma_i^\pm(m')(u))\subseteq \L (m_+)_{m'}$. By Proposition \ref{Aprop}, for all $r\in \Z$, $x^+_{j,r} \on \ket{m'} = 0$ for all $j\neq i$, and $x^+_{i,r}\on\ket{m'}\in (L m_+)_m$. If $x^+_{i,r}\on \ket {m'} =0$ for all $r\in \Z$ then $\ket {m'}$ generates a proper submodule in $\L (m_+)$: a contradiction since $\L (m_+)$ is simple. So for some $r\in \Z$, $x^+_{i,r} \on \ket{m'}$ is non-zero and spans the one-dimensional (by the inductive assumption) $l$-weight space $\L (m_+)_m$. 
Therefore $\ket{m'}\notin \Span_{r\in \Z} x_{i,r}^- (\L (m_+)_m)$, because $\L(\beta_i(M))$ is by definition irreducible, $\beta_i(m)$ appears in its $q$-character, and $\beta_i(m')$ does not. Now, if $m''\in \mchiq(\L(m_+))$ and $j\in I$ are such that $\ket{m'}\in \Span_{r\in \Z} x_{j,r}^{-}\on \L(m_+)_{m''}$ then $\wt(m'') = \wt(m')+e^{\alpha_j}$ and hence $v(m'' m_+^{-1})=n-1$. So, by the inductive assumption, $\dim(\L(m_+)_{m''})=1$, and thus Proposition \ref{Aprop} applies to any $\ket{m''}\in \L(m_+)_{m''}$. Hence, by (\ref{nin}),  $x_{j,r}^-\ket{m''}$ has zero component in $\L(m_+)_{m'}$ for all $m''\neq m$. So $\ket{m'}\notin \Span_{i\in I,r\in \Z} x_{i,r}^- (\L (m_+))$: a contradiction. Therefore in fact $m'$ is not a monomial in $\chi_q(\L (m_+))$.

Finally consider case (III). By Property (\ref{inthinsimple}), $mA_{i,a}^{-1}\notin\mc M$. Now $mA_{i,a}^{-1}$ is not $i$-dominant since $u_{i,aq^{-r_i}}(mA_{i,a}^{-1}) =-1$. But $\beta_i(mA_{i,a}^{-1})$ is not in the $q$-character of the Weyl module $W(\beta_i(M))$, and, recall, $M$ is the unique $i$-dominant monomial in $mA_{i,a}^{-1} \Q^+_{U\cap (\{i\}\times \Cx),(>0)}$. Therefore $mA_{i,a}^{-1}$ cannot appear in $\chi_q(\L(m_+))$ because it is not part of a consistent $\uqsl i$-character.

This completes the inductive step, and we have therefore established the claim above, for all $n\in\Z_{\geq 0}$, and hence the equality (\ref{headch}). Finally, it follows that $\L(m_+)$ is manifestly thin in $U$, and it is special in $U$ by Property (\ref{monlydom}).
\end{proof}

\end{appendix}


\def\cprime{$'$}
\providecommand{\bysame}{\leavevmode\hbox to3em{\hrulefill}\thinspace}
\providecommand{\MR}{\relax\ifhmode\unskip\space\fi MR }
\providecommand{\MRhref}[2]{%
  \href{http://www.ams.org/mathscinet-getitem?mr=#1}{#2}
}
\providecommand{\href}[2]{#2}

\end{document}